\newcommand{\be}{\begin{equation}}
\newcommand{\ee}{\end{equation}}
\newcommand{\ba}{\begin{eqnarray}}
\newcommand{\ea}{\end{eqnarray}}
\newcommand{\bi}{\begin{itemize}}
\newcommand{\ei}{\end{itemize}}
\newcommand{\bn}{\begin{enumerate}}
\newcommand{\en}{\end{enumerate}}
\newcommand{\bbm}{\begin{bmatrix}}
\newcommand{\ebm}{\end{bmatrix}}
\newcommand{\bp}{\begin{proof}}
\newcommand{\ep}{\end{proof}}
\newcommand{\nn}{\nonumber}
\newcommand{\mr}{\ensuremath{\mathrm}}
\newcommand{\scr}{\ensuremath{\mathscr}}
\newcommand{\mbf}{\ensuremath{\mathbf}}
\newcommand{\mc}{\ensuremath{\mathcal}}
\newcommand{\ov}{\ensuremath{\overline}}
\newcommand{\wt}{\ensuremath{\widetilde}}
\renewcommand{\bm}{\ensuremath{\mathbb }}
\newcommand{\Ga}{\ensuremath{\Gamma}}
\newcommand{\ga}{\ensuremath{\gamma}}
\newcommand{\La}{\ensuremath{\Lambda }}
\newcommand{\la}{\ensuremath{\lambda }}
\newcommand{\ka}{\ensuremath{\kappa }}
\def\C{\mathbb{C}}
\def\D{\mathbb{D}}
\def\T{\mathbb{T}}
\def\N{\mathbb{N}}
\def\B{\mathbb{B}}
\newcommand{\n}{\ensuremath{\mathbf{n} }}
\newcommand{\J}{\ensuremath{\mathcal{J} }}
\newcommand{\K}{\ensuremath{\mathcal{K} }}
\newcommand{\F}{\ensuremath{\mathbb{F} }}
\newcommand{\ip}[2]{\ensuremath{\langle {#1} , {#2} \rangle}}
\renewcommand{\dim}[1]{\ensuremath{\mathrm{dim} \left( {#1} \right) }}
\newcommand{\ran}[1]{\ensuremath{\mathrm{Ran} \left( {#1} \right) }}
\newcommand{\ra}[1]{\ensuremath{\mathcal{R} \left( {#1} \right) }}
\renewcommand{\ker}[1]{\ensuremath{\mathrm{Ker} ({#1}) }}
\newcommand{\ang}[1]{\ensuremath{\langle{#1}\rangle }}
\numberwithin{equation}{section}
\numberwithin{subsection}{section}
\newtheorem{thm}[subsection]{Theorem}
\newtheorem{lemma}[subsection]{Lemma}
\newtheorem{prop}[subsection]{Proposition}
\newtheorem{cor}[subsection]{Corollary}
\newtheorem{thm*}{Theorem}
\theoremstyle{definition}
\newtheorem{defn}[subsection]{Definition}
\newtheorem{remark}[subsection]{Remark}
\newtheorem{eg}[subsection]{Example}
\title[Gleason model]{A Gleason solution model for row contractions}
\author[R.T.W. Martin]{Robert T.W. Martin}
\address{Department of Mathematics and Applied Mathematics\\
University of Cape Town}
\email{rtwmartin@gmail.com}
\author[A. Ramanantoanina]{Andriamanankasina Ramanantoanina}
\address{Department of Mathematics and Applied Mathematics\\
University of Cape Town}
\email{andriamanak@gmail.com}
\dedicatory{Dedicated to, and inspired by research of, Joseph A. Ball.}
\thanks{The first author acknowledges support of NRF CPRR Grants 90551 and 105837.}
\begin{document}
\small
\maketitle

\begin{abstract}

    In the de Branges-Rovnyak functional model for contractions on Hilbert space, any completely non-coisometric (CNC) contraction is
represented as the adjoint of the restriction of the backward shift to a de Branges-Rovnyak space, $\scr{H} (b)$, associated to a contractive analytic
operator-valued function, $b$, on the open unit disk.

We extend this model to a large class of CNC contractions of several copies of a Hilbert space into itself (including all CNC row contractions with commuting component operators). Namely, we completely characterize the set of all CNC row contractions, $T$, which are unitarily equivalent to an extremal Gleason solution for a de Branges-Rovnyak space, $\scr{H} (b_T)$, contractively contained in a vector-valued Drury-Arveson space of analytic functions on the open unit ball in several complex dimensions. Here, a Gleason solution is the appropriate several-variable analogue of the adjoint of the restricted backward shift and the characteristic function, $b_T$, belongs to the several-variable Schur class of contractive multipliers between vector-valued Drury-Arveson spaces. The characteristic function, $b_T$, is a unitary invariant, and we further characterize a natural sub-class of CNC row contractions for which it is a complete unitary invariant.

\end{abstract}

\section{Introduction}

The de Branges-Rovnyak and Sz.-Nagy-Foia\c{s} functional models are two widely-used and powerful approaches to the representation theory of contractions on Hilbert space \cite{dBss,dBmodel,NF}.  These two constructions provide equivalent models for completely non-unitary (CNU) contractions \cite{Ball1987,Nik1986,Sarason-dB,Ball2011dBR}.  In this paper we focus on the de Branges-Rovnyak model and its several-variable extension to the setting of contractions from several copies of a Hilbert space into itself.

A linear contraction, $T: \mc{H} \rightarrow \J$, between Hilbert spaces, $\mc{H}, \J$, is called \emph{completely non-coisometric} (CNC) if it has no co-isometric restriction to a non-trivial subspace. In the de Branges-Rovnyak model for CNC contractions on a (single) Hilbert space, the model operator acts on the (unique) reproducing kernel Hilbert space (RKHS), $\mc{H} (k ^b)$, associated to an operator-valued contractive analytic function $b$ on the open unit disk, $\D$, in the complex plane \cite{Ball1987,Ball2011dBR}. Here the positive, sesqui-analytic de Branges-Rovnyak kernel $k^b$ is
$$ k ^b (z,w) := \frac{ I - b(z) b(w) ^*}{1-zw^*}.$$ In the above $z^* := \ov{z}$ denotes complex conjugation. The RKHS
$\scr{H} (b) := \mc{H} (k ^b)$ is called the de Branges-Rovnyak space of $b$, and in the case where $b \equiv 0$ we recover the Szeg\"{o} kernel for the classical Hardy space of analytic functions in the unit disk.

Recall that the \emph{shift}, $S :H^2 (\D ) \rightarrow H^2 (\D )$ is the canonical isometry of multiplication by $z$ and its adjoint, the \emph{backward shift}, $S^*$, acts as the difference quotient: $$ (S^* h) (z) = \frac{h(z) - h(0)}{z}; \quad \quad h \in H^2 (\D).$$  The shift plays a central role in the classical theory of Hardy spaces \cite{Hoff,Nik-shift,GR-model}. If $T$ is any CNC contraction, there is a (essentially unique) contractive, operator-valued analytic function, $b_T$, on the unit disk, $b_T (z) \in \scr{L} (\mc{H} , \K)$ (\emph{i.e.} a member of the operator-valued \emph{Schur class}), so that $T$ is unitarily equivalent to $X$ where $X ^* := S^* | _{\scr{H} (b _T)}$ is the restriction of the backward shift
of the vector-valued Hardy space $H^2 (\D ) \otimes \K$ to the de Branges-Rovnyak space $\scr{H} (b _T)$ \cite{Ball2011dBR,Ball1987}. (Any de Branges-Rovnyak space $\scr{H} (b)$ associated to a contractive, operator-valued analytic function, $b$, on the disk is always contractively contained in vector-valued Hardy space and is always co-invariant for the shift \cite{Sarason-dB}.)  This provides a natural model for CNC contractions as adjoints of restrictions of the backward shift to de Branges-Rovnyak reproducing kernel Hilbert spaces, and this is the model we extend to several variables in this paper.

A canonical several-variable extension of the Hardy space of the disk is the Drury-Arveson space, $H^2 _d$, the unique RKHS of analytic functions on the open unit ball $\B ^d := (\C ^d ) _1$ corresponding to the several-variable Szeg\"{o} kernel. (If $Y$ is a Banach space, let $(Y) _1$ denote the open unit ball and let $[Y]_1$ denote the closed unit ball.)  The Schur classes of contractive, operator-valued functions on the disk are promoted to the multi-variable Schur classes, $\scr{S} _d (\J , \K)$, of contractive, operator-valued multipliers between vector-valued Drury-Arveson spaces $H^2 _d \otimes \J$ and $H^2 _d \otimes \K$ (see Subsection \ref{RKHSsection}), and the appropriate analogue of the adjoint of the restricted backward shift in this several-variable setting is a contractive solution to the \emph{Gleason problem} in $\scr{H} (b)$: A contraction $X = (X_1 , ... , X_d ) : \scr{H} (b) \otimes \C ^d \rightarrow \scr{H} (b)$ is called a \emph{Gleason solution} (or a solution to the Gleason problem) for $\scr{H} (b)$ if its adjoint acts as the multi-variable difference quotient:
$$ (zX^* f) (z) := z_1 (X_1 ^* f) (z) + ... + z_d (X_d ^* f) (z) = f(z) - f(0); \quad \quad \forall f \in \scr{H} (b).$$  The concepts of \emph{contractive} and extremal contractive (or more simply, \emph{extremal}) Gleason solution are further norm/positivity constraints and we will review these basic definitions in the upcoming Subsection \ref{Gleasonsub}, see Definition \ref{Gledef}.

Given any contraction between Hilbert spaces, $T  : \mc{H} \rightarrow \K$, recall that the defect operator, $D _T$ of $T$ is defined as
$$D _T := \sqrt{I - T^* T}.$$ We say that a \emph{row contraction}, \emph{i.e.} a contraction from several copies of a Hilbert space into itself, $T = (T_1, ..., T_d) : \mc{H} \otimes \C ^d \rightarrow \mc{H}$, obeys the \emph{commutative CNC condition}, and we write: $T$ is CCNC if
$$ \mc{H} = \bigvee _{z \in \B ^d } (I -T z ^* ) ^{-1} \ran{D_{T^*}}. $$ Here, and throughout, $\bigvee$ denotes closed linear span. We will prove that any CCNC row contraction $T$ is automatically CNC (Corollary \ref{CCNCcor}), and that any \emph{$d$-contraction} (a row contraction with $d$ mutually commuting component operators) is CNC if and only if it is CCNC. One of the main results of this paper is the extension of the de Branges-Rovnyak model for CNC contractions to the class of all CCNC row contractions:
\begin{thm*}{ (Theorem \ref{main1})} \label{main1intro}
    A row contraction $T :\mc{H} \otimes \C ^d \rightarrow \mc{H}$ is CCNC (obeys the commutative CNC condition) if and only if it is unitarily
equivalent to an extremal (contractive) Gleason solution in a multi-variable de Branges-Rovnyak space $\scr{H} (b)$ for a Schur-class multiplier $b \in \scr{S} _d (\J , \K)$.

If $T$ is unitarily equivalent to an extremal Gleason solution $X^b$ for $\scr{H} (b)$, then the characteristic function $b := b_T$ is a unitary invariant for $T$:
If $T_1, T_2$ are unitarily equivalent CCNC row contractions, then $b_{T_1}$ coincides weakly with $b_{T_2}$. One can choose $b_T \in \scr{S} _d (\ran{D_T} , \ran{D_{T^*}} )$.
\end{thm*}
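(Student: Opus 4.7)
The plan is to adapt the classical one-variable deBranges-Rovnyak construction of the characteristic function to the multi-variable Drury-Arveson and Schur-class setting. The biconditional statement will be proved by constructing, from any CCNC row contraction $T$, a Schur multiplier $b_T$ together with an explicit unitary equivalence between $T$ and an extremal Gleason solution on $\scr{H}(b_T)$, and conversely by verifying the CCNC condition directly on extremal Gleason solutions.

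For the forward direction, given a CCNC row contraction $T : \H \otimes \C^d \to \H$, I would set $\K := \ov{\ran{D_{T^*}}}$, choose an auxiliary coefficient space $\J$ together with a unitary (or at least isometric) colligation
$$ U_T = \bbm T & C \\ B & D \ebm : (\H \otimes \C^d) \oplus \J \to \H \oplus \K, $$
built from $T$, $D_T$, $D_{T^*}$ and the polar part of $T^*$, and then define the characteristic function $b_T \in \scr{S}_d(\J,\K)$ as the transfer function of this colligation. The candidate intertwiner would be the map $\Phi_T : \H \to H^2_d \otimes \K$ with $(\Phi_T h)(z) = D_{T^*}(I - T z^*)^{-1} h$; by the standard transfer-function identity together with the defect relation $I - T T^* = D_{T^*}^2$, this map should be isometric into $\scr{H}(b_T)$ and intertwine $T$ with an extremal Gleason solution $X^{b_T}$. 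The CCNC hypothesis is precisely what guarantees that $\Phi_T$ has dense range, and hence is a unitary.

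For the converse direction, starting from an extremal Gleason solution $X^b$ on $\scr{H}(b)$, I would verify the CCNC condition by a direct reproducing-kernel computation: the vectors $(I - X^b z^*)^{-1} \ran{D_{(X^b)^*}}$ are, via the extremal structure, exactly the deBranges-Rovnyak kernel functions $k^b(\cdot,z) d$ for $d \in \K$, and these densely span $\scr{H}(b)$ by definition of the RKHS. Extremality is what is needed here to match the defect space of $(X^b)^*$ with the coefficient space $\K$ of $k^b$.

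For the unitary-invariance statement, a unitary $U : \H_2 \to \H_1$ intertwining $T_2$ with $T_1$ componentwise transports defect operators as $U D_{T_2^*} U^* = D_{T_1^*}$, and thereby induces canonical unitaries between the coefficient spaces $\J_i, \K_i$ and a unitary equivalence $U_{T_1} \cong U_{T_2}$ of the associated colligations. The transfer functions of unitarily equivalent colligations coincide weakly, which is exactly the equivalence in the conclusion. The main technical obstacle I anticipate is in the forward direction: ensuring that $\Phi_T$ maps \emph{onto} all of $\scr{H}(b_T)$ (rather than a strictly smaller subspace), and that the resulting Gleason solution is \emph{extremal} rather than merely contractive. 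Both points require careful choices of the coefficient spaces and colligation, and a precise tracking of which multi-variable identities must replace the single-variable ones used in the classical deBranges-Rovnyak argument.
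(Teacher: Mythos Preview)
Your approach is essentially correct but takes a genuinely different route from the paper's. You build $b_T$ directly as the transfer function of a unitary colligation with $A=T^*$ and $C=D_{T^*}$, and use the observability map $\Phi_T$ as the intertwining unitary; this is the Sz.-Nagy--Foia\c{s} style construction carried out for CNC $d$-contractions in \cite{BES2006cnc}. The paper instead first passes to the isometric--pure decomposition $T=V-C$, builds a \emph{model triple} $(\Ga,\J_\infty,\J_0)$ for the row partial isometry $V$ to produce a characteristic function $b_V$ with $b_V(0)=0$, proves $V$ is unitarily equivalent to an extremal Gleason solution for $\scr{H}(b_V)$ via an explicit kernel calculation (Theorems \ref{Glerep} and \ref{Glerep2}), and only then recovers $b_T$ as the Frostman shift $(b_V)^{\ang{\delta_T}}$, using the Crofoot-type multiplier $M^{\ang{\delta_T}}$ to transport the Gleason solution from $\scr{H}(b_V)$ to $\scr{H}(b_T)$. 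The paper confirms a posteriori (Proposition \ref{NFprop}) that its $b_T$ coincides weakly with your $\Theta_T$, so the two constructions land on the same object. Your route is more direct; the paper's buys a structural decomposition of $b_T$ into a Frostman shift of the characteristic function of the partial isometric part, which is what drives the later analysis of quasi-extreme row contractions.

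Two small corrections to your write-up. First, in the paper's convention $Tz^*=\sum_k\bar z_kT_k$, so $(I-Tz^*)^{-1}$ is anti-analytic; you want $(\Phi_Th)(z)=D_{T^*}(I-zT^*)^{-1}h$. Second, your diagnosis of the ``main obstacle'' is inverted: once the colligation is chosen unitary, the standard transfer-function identity gives $k^{b_T}(z,w)=D_{T^*}(I-zT^*)^{-1}(I-Tw^*)^{-1}D_{T^*}$ exactly, so $\Phi_T$ is automatically a coisometry \emph{onto} $\scr{H}(b_T)$; the CCNC hypothesis is what makes $\Phi_T$ \emph{injective} (its kernel is $\bigl(\bigvee_z(I-Tz^*)^{-1}\ran{D_{T^*}}\bigr)^\perp$), not surjective. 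Extremality then falls out from $\Phi_T D_{T^*}|_\K=k_0^{b_T}$, which gives $XX^*=\Phi_T(I-D_{T^*}^2)\Phi_T^*=I-k_0^{b_T}(k_0^{b_T})^*$ directly.
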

\begin{remark}
In the above, two row contractions $T ^{(k)} : \mc{H} _k \otimes \C ^d \rightarrow \mc{H} _k$, $k=1,2$ are said to be unitarily equivalent, denoted $T ^{(1)} \simeq T ^{(2)}$,
if there is an onto isometry $U : \mc{H} _1 \rightarrow \mc{H} _2$ so that
$$ U T ^{(1)} = T ^{(2)} (U \otimes I_d ). $$ Equivalently, $U T ^{(1)} _j U^* = T^{(2)} _j; \quad 1 \leq j \leq d$. Two operator-valued Schur class functions $b_k \in \scr{S} _d (\mc{H} _k , \J _k )$ are said to \emph{coincide weakly} if there is an onto isometry $U : \J _1 \rightarrow \J _2$ so that
$$ U b_1 (z) b_1 (w) ^* U^* = b_2 (z) b_2 (w) ^* ; \quad \quad \forall \ z,w \in \B ^d. $$ See Definition \ref{Livsicdef} and Definition \ref{fullchardef} for the definitions of the Schur-class characteristic functions of a CNC row partial isometry, and an arbitrary CNC row contraction, respectively.
\end{remark}
\begin{remark} \label{BBFremark}
As we will show in Subsection \ref{BBFsect}, the above theorem is an easy consequence of the colligation and transfer-function realization theory of Ball-Bolotnikov-Fang \cite{Ball2007trans,Ball2010}.
\end{remark}

In \cite{Jur2014AC,JM}, the concept of a \emph{quasi-extreme multiplier} for Drury-Arveson space was introduced. This concept is a several-variable extension
of a `Szeg\"{o} approximation property', the salient idea being that this property is equivalent to being an extreme point of the Schur class in the
classical, single-variable, scalar-valued setting (see Section \ref{QErowsect} and Definition \ref{QEdef}). Recently, it has been shown that quasi-extreme implies extreme in the scalar-valued, several-variable setting as well \cite{JMqe}. If $b$ is a quasi-extreme Schur multiplier, then $\scr{H} (b)$ has a unique contractive (and extremal) Gleason solution (see Theorem \ref{equivqe}). We say that a CCNC row contraction $T$ is \emph{quasi-extreme} (QE) if its characteristic function $b_T$ is a quasi-extreme multiplier.
Our second main result characterizes the class of all QE row contractions:
\begin{thm*}{ (Theorem \ref{main2}, Theorem \ref{QEconditionthm})} \label{main2intro}
    A row contraction $T : \mc{H} \otimes \C ^d \rightarrow \mc{H}$ is QE if and only if it is CCNC and obeys the QE condition:
$$ \ker{T} ^\perp \subseteq \bigvee _{z \in \B ^d} z ^* (I -Tz^*)^{-1} \ran{D_{T^*}}. $$
This happens if and only if $T$ is unitarily equivalent to the (unique) extremal Gleason solution in a de Branges-Rovnyak space $\scr{H} (b)$
for a quasi-extreme $b \in \scr{S} _d (\J , \K)$. The QE characteristic function $b_T :=b$ of a QE row contraction $T$ is a complete unitary invariant: two QE row contractions $T_1, T_2$ are unitarily equivalent if and only if their characteristic functions coincide weakly.
\end{thm*}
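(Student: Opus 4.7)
The plan is to reduce the theorem to Theorem \ref{main1} together with the characterization (Theorem \ref{equivqe}) of quasi-extreme Schur multipliers as exactly those $b$ for which $\H(b)$ admits a unique contractive Gleason solution. By definition, $T$ is QE iff $b_T$ is quasi-extreme, iff $\H(b_T)$ has a unique contractive Gleason solution; this unique solution is then automatically extremal. Combining this with Theorem \ref{main1} immediately produces the equivalence between ``$T$ is QE'' and ``$T$ is unitarily equivalent to the unique contractive Gleason solution in some $\H(b)$ for quasi-extreme $b \in \scr{S}_d(\J,\K)$.''

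For the equivalence with CCNC plus the operator-theoretic QE condition, I would work in the model. Every QE row contraction is CCNC by the previous paragraph and Theorem \ref{main1}, so the real content is the translation of the uniqueness of the contractive Gleason solution into an abstract spanning condition on $\ker{T}^\perp$. Writing $T \cong X^b$, the defect space $\ran{D_{T^*}}$ corresponds to a ``constants'' subspace of $\H(b)$ (concretely, the $D_{b(0)^*}$-image inside the coefficient space), while $(I - Tz^*)^{-1}\ran{D_{T^*}}$ corresponds to the span of reproducing-kernel-type vectors at $z \in \B^d$; prepending the factor $z^*$ then picks out those kernel-type vectors that vanish at the origin. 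A standard Schur-class parametrization of contractive Gleason solutions as an affine family shows that any two such solutions in $\H(b)$ agree iff the span of $z^*(I - X^b z^*)^{-1}\ran{D_{(X^b)^*}}$ exhausts $\ker{X^b}^\perp$; transferred through the unitary equivalence, this is exactly the QE condition on $T$. In the reverse direction, if $T$ is CCNC and satisfies the QE condition, Theorem \ref{main1} gives $T \cong X^{b_T}$, and running the translation backwards shows $X^{b_T}$ is the unique contractive Gleason solution in $\H(b_T)$, so $b_T$ is quasi-extreme by Theorem \ref{equivqe}.

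For the complete-invariant statement, one direction (unitary equivalence of $T_1, T_2$ implies weak coincidence of $b_{T_1}, b_{T_2}$) is already part of Theorem \ref{main1}. For the converse, suppose $T_1, T_2$ are QE with $b_{T_1}$ and $b_{T_2}$ coinciding weakly. Weak coincidence supplies unitaries between the coefficient Hilbert spaces that intertwine the two Schur multipliers, and these lift to a unitary $U : \H(b_{T_1}) \to \H(b_{T_2})$ intertwining multiplication by the coordinate functions. Because each $\H(b_{T_i})$ admits a \emph{unique} contractive Gleason solution, $U$ is forced to intertwine $X^{b_{T_1}}$ with $X^{b_{T_2}}$, whence $T_1$ and $T_2$ are unitarily equivalent. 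The main obstacle in all of this lies in the middle paragraph: precisely controlling how two contractive Gleason solutions in $\H(b)$ can differ, and turning that control into the concrete span condition on $\ker{T}^\perp$, demands a careful reproducing-kernel calculation that tracks the defect spaces through the model correspondence of Theorem \ref{main1}.
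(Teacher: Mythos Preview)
Your overall architecture matches the paper: Theorem \ref{main2} is indeed deduced from Theorem \ref{main1} together with the uniqueness characterization of quasi-extreme multipliers, and the complete-invariant argument via a constant unitary multiplier between $\scr{H}(b_1)$ and $\scr{H}(b_2)$ is exactly what the paper does.

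The gap is in your middle paragraph, and it is not merely a matter of filling in a reproducing-kernel calculation. You invoke a ``standard Schur-class parametrization of contractive Gleason solutions as an affine family'' to conclude that uniqueness is equivalent to the span condition, but no such off-the-shelf parametrization does this job directly. The paper isolates this equivalence as a separate result (Theorem \ref{newQE}) and proves it by a two-step reduction that is not visible in your sketch. First, the bijection of Theorem \ref{FSGSbij} between Gleason solutions for $b$ and for the Frostman shift $b^{\ang{0}}$ is used to reduce to the case $b(0)=0$, so that $[b]$ is strictly contractive and the Herglotz machinery of Subsection \ref{HerglotzSection} applies. Second, with $a=[b]$ the square extension, the paper exploits the explicit bijection $\mbf{a}=\mbf{a}[D]$ between contractive extensions $D\supseteq V^a$ of the Herglotz partial isometry and contractive Gleason solutions (equation (\ref{Gleformb})); the span condition $\mbf{b}\,\J\subseteq\bigvee z^* k_z^b\,\K$ is then shown to force $D^* K_0^a\,\J\subseteq\ker{V^a}^\perp$, hence $K_0^a\,\J\subseteq\ran{V^a}$, which is criterion (v) of Theorem \ref{equivqe}. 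The converse direction likewise routes through the Herglotz space. In short, the ``affine family'' you allude to is realized concretely as the family of contractive extensions of $V^{[b]}$, and the translation to the span condition on $\ker{X}^\perp$ passes through the unitary multiplier $U_a:\scr{H}(a)\to\scr{H}^+(H_a)$ rather than a direct defect-space bookkeeping in $\scr{H}(b)$. Without this Herglotz-space detour (or an equivalent replacement), your middle paragraph does not close.
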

In summary, given the following strict hierarchy of classes of row contractions on Hilbert space,
$$ CNU \supsetneq CNC \supsetneq CCNC \supsetneq QE, $$ this paper constructs a commutative de Branges-Rovnyak model for the CCNC and QE classes.

Previous work on the representation theory and functional models for row contractions include \cite{Popmodel,Popchar,Ball2005,Ball2008,Ball2011dBR,BES2005char,BES2006cnc}. The theory of Popescu \cite{Popmodel,Popchar} for CNC row contractions constructs a Sz.-Nagy-Foia\c{s}-type model by studying the structure of the space of the minimal row isometric dilation of the row contraction, and defines a non-commutative `characteristic function' (\emph{i.e.} an element of the free Schur class, see \cite{Ball2006Fock}) which is a complete unitary invariant. This theory is extended to arbitrary CNU row contractions by Ball-Vinnikov in \cite{Ball2005}.

The papers \cite{Ball2011dBR,BES2005char,BES2006cnc} of Ball-Bolotnikov and Bhattacharyya-Eschmeier-Sarkar study the model theory of $d-$contractions (row contractions with $d$ mutually commuting component operators).  In \cite{BES2005char,BES2006cnc}, the classical Sz.-Nagy-Foia\c{s} model and the definition of the Sz.-Nagy-Foia\c{s} characteristic function is extended to CNC $d$-contractions, and \cite{Ball2011dBR} recovers this theory as a special case of the general model developed in \cite{Ball2005} for arbitrary CNU row contractions. In this model theory for $d$-contractions, the characteristic function is an element of the (operator-valued, several-variable) Schur classes, and this characteristic function is a complete unitary invariant for CNC $d$-contractions \cite[Theorem 3.6]{BES2006cnc}, \cite[Theorem 5.2]{Ball2011dBR}. In particular, \cite[Theorem 5.7]{Ball2011dBR}, proves that $T$ is a CNC $d$-contraction if and only if $T$ is unitarily equivalent to a contractive Gleason solution, $X^T$ acting on $\scr{H} (b_T)$, where $b_T$ is the characteristic function of $T$. In this case, since $T$ has commuting component operators, the results \cite[Theorem 3.5, Theorem 3.6]{Ball2008} imply that $\scr{H} (b^T) \subset H^2 _d \otimes \K$ is co-invariant for the (vector-valued) Arveson $d$-shift, $S\otimes I_\K$, that $(X^T) ^* = (S^* \otimes I_\K ) | _{\scr{H} (b^T) }$, and that $X^T$ is the unique contractive Gleason solution for $\scr{H} (b^T)$ which has commuting component operators (\emph{i.e.} which is a $d$-contraction).

More generally, the seminal work of Ball-Bolotnikov-Fang on transfer-function realization theory for the multi-variable Schur class provides a commutative de Branges-Rovnyak model for arbitrary CCNC row contractions \cite{Ball2007trans,Ball2008,Ball2010,Ball2011dBR} (see Subsection \ref{BBFsect}).  This series of papers clearly demonstrates that the appropriate multi-variable generalization of the notion of restriction of the backward shift to a de Branges-Rovynak space is (the adjoint of) a contractive Gleason solution, and that contractive Gleason solutions should play the role of the model operator in a several-variable de Branges-Rovnyak model.  Moreover, as we will see in the upcoming Subsection \ref{BBFsect}, Theorem \ref{main1intro} follows directly from the transfer-function realization theory of \cite{Ball2007trans}, and this provides a (commutative) multi-variable de Branges-Rovnyak model for arbitrary CCNC row contractions.

\subsection{Overview}

We will provide an alternate approach to Theorem \ref{main1intro} which is inspired by classical work of Kre\u{\i}n and Liv\v{s}ic \cite{Krein1944,Krein1944one,Krein1949,Krein,Livsicone,Livsic}. M.S. Liv\v{s}ic developed a functional model and the notion of a characteristic function for arbitrary CNU partial isometries with equal defect indices in \cite{Livsic,Livsicone}. Similarly, in a series of papers, M.G. Kre\u{\i}n constructed a general functional model for unbounded symmetric operators with equal defect indices \cite{Krein1944,Krein1944one,Krein1949} (the reference \cite{Krein} provides an overview of this theory). Recall here, that the classes of partial isometries and symmetric linear transformations on Hilbert space are equivalent under a natural bijection, the \emph{Cayley Transform}, a fractional linear transformation of the closed complex upper half-plane onto the closed unit disk (minus the point $\{ 1 \}$) \cite[Chapter VII]{Glazman}. In \cite{AMR,GMR,Martin-ext}, several key aspects of these approaches were combined and slightly generalized with the introduction of the concept of a \emph{model map} for a symmetric linear transformation or partial isometry with equal defect indices \cite[Section 2.1]{AMR}, \cite[Section 3]{GMR}, \cite[Definition 5.1]{Martin-ext}.

In Section \ref{modelmapsect} we extend this approach to several-variables and apply it to develop the representation theory of CCNC row partial isometries.   Along the way we will obtain several results of independent interest.  Corollary \ref{Gmultbyz} proves that any extremal Gleason solution for a multivariable de Branges-Rovnyak space $\scr{H} (b)$ with $b(0) =0$ acts as multiplication by the independent variables on its initial space (the projection onto the initial space prevents this from being a $d$-contraction in general). Our definition, Definition \ref{Livsicdef}, of the (operator-valued) Schur-class characteristic function, $b_V$, of any CCNC row partial isometry, $V$, is a direct multi-variable analogue of Liv\v{s}ic's original definition from \cite{Livsic}. In Section \ref{CCNCmodelsect} we study operator-M\"{o}bius transformations (Frostman shifts) of Schur-class functions, and we show that if $T$ is any CCNC row contraction with canonical decomposition $T=V-C$ where $V=T P _{\ker{D_T}}$ is a CCNC row partial isometry, that the characteristic function $b_V$, of $V$, is the Frostman shift of $b_T$ which vanishes at $0$.

Our Liv\v{s}ic-type characteristic function, $b_T$, is equivalent to the Sz.-Nagy-Foia\c{s}-type characteristic function of $T$ as constructed for CNC $d$-contractions in \cite{BES2005char,BES2006cnc,Ball2011dBR} (see Proposition \ref{NFprop}). Theorem \ref{main1intro} shows that the map $T \mapsto b_T$ is a surjection of CCNC row contractions onto (weak equivalence classes of) multi-variable operator-valued Schur-class functions, and that $T$ is in the inverse image of $b \in \scr{S} _d (\J , \K)$ if and only if it is unitarily equivalent to some extremal Gleason solution for $\scr{H} (b)$. The characteristic function is known to be a complete unitary invariant for the class of commutative CCNC row contractions (CNC $d$-contractions) \cite{BES2005char,BES2006cnc,Ball2011dBR}. The characteristic function is still a unitary invariant for the general class of all CCNC row contractions (see Theorem \ref{main1}), but it is not a complete unitary invariant (see Subsection \ref{Gleg}). Theorem \ref{main2} shows that the characteristic function is also a complete unitary invariant for the smaller sub-class of QE row contractions (which are again generally non-commuting). 

\section{Preliminaries and Background}

\subsection{Vector-valued RKHS} \label{RKHSsection}

    We will be using the theory of vector-valued reproducing kernel Hilbert spaces throughout this paper, as presented in \emph{e.g.} \cite{Paulsen-rkhs}.
Recall the following basic facts from RKHS theory:

Given a set $X$, and an auxiliary Hilbert space $\mc{H}$, an $\mc{H}$-valued RKHS $\K$ on $X$ is a Hilbert space of $\mc{H}$-valued functions on $X$ so that for any $x \in X$ the linear point evaluation maps
$K_x ^* \in \scr{L} ( \K , \mc{H} )$ defined by $$ K_x ^* F = F(x) \in \mc{H}; \quad \quad  F \in \K, $$ are bounded. We write $K_x := (K _x ^* ) ^* \in \scr{L} (\mc{H} , \K )$ for the Hilbert space adjoint. The
operator-valued function $ K : X \times X \rightarrow \scr{L} (\mc{H} )$:
$$ K (x,y) :=  K_x ^* K_y \in \scr{L} (\mc{H} ); \quad \quad x,y \in X, $$ is called the \emph{reproducing kernel} of $\K$. One usually writes $\K = \mc{H} (K )$. The reproducing kernel $K$ of any vector-valued RKHS on $X$ is
a \emph{positive kernel function} on $X$: A function $K : X \times X \rightarrow \scr{L} (\mc{H})$ is an operator-valued positive kernel function on $X$ if for any finite set of distinct points $\{ x_k \} _{k=1} ^N \subset X$, the matrix
$$ [ K (x_i , x_j ) ]  \in \scr{L} (\mc{H} ) \otimes \C ^{N\times N}, $$ is positive semi-definite. The vector-valued extension of the theory of RKHS developed by Aronszajn and Moore (see \emph{e.g.} \cite{Paulsen-rkhs}) shows that there is a bijection between positive $\scr{L} (\mc{H} )$-valued kernel functions on $X\times X$ and RKHS of $\mc{H}$-valued functions on $X$. Namely, given any positive kernel $K$ on $X$ there is a unique RKHS $\K$ on $X$ so that $K$ is its reproducing kernel, $\K = \mc{H} (K)$.

Any two RKHS $\mc{H} (k)$, $\mc{H} (K)$ with $\scr{L} (\J)$ and $\scr{L} (\K)$-valued positive kernel functions $k,K$ on some set $X$, respectively, can be naturally equipped with a \emph{multiplier space}:
$$ \mr{Mult} (\mc{H} (k) , \mc{H} (K) ) := \{ F :  X \rightarrow \scr{L} (\J , \K) | \ F h \in \mc{H} (K) \ \forall h \in \mc{H} (k) \}. $$ Here, and throughout $\mc{H}, \J , \K$ denote separable (or finite-dimensional) Hilbert spaces. That is, $\mr{Mult} (\mc{H} (k) , \mc{H} (K) )$ is the space of all operator-valued functions which multiply elements of $\mc{H} (k)$ into $\mc{H} (K)$. Viewing multipliers, $F$, (elements of the multiplier space) as linear maps, $M _F$, from $\mc{H} (k)$ into $\mc{H} (K)$, standard functional analysis arguments show that any multiplier is a bounded linear map, $\mr{Mult} (\mc{H} (k) , \mc{H} (K) ) \subset \scr{L} (\mc{H} (k), \mc{H} (K) )$, and $\mr{Mult} ( \mc{H} (k) , \mc{H} (K) ) $ is closed in the weak operator topology. In the particular case where $\mc{H} (k) = \mc{H} (K)$, $\mr{Mult} (\mc{H} (K) ) := \mr{Mult} (\mc{H} (K) , \mc{H} (K )$ is a unital WOT-closed algebra of bounded linear operators on $\mc{H} (K)$, the \emph{multiplier algebra}.

We work in the setting of vector-valued Drury-Arveson space $H^2 _d \otimes \mc{H}$, where $\mc{H}$ is finite dimensional or separable. This is the vector-valued reproducing kernel Hilbert space $\mc{H} (k)$ of $\mc{H}$-valued functions on the ball $\B ^d := (\C ^d ) _1$ corresponding to the several-variable, operator-valued Szeg\"{o} kernel: $$ k (z,w) := \frac{1}{1-zw^*} I _\mc{H}; \quad \quad z,w \in \B ^d. $$ Here, $zw^* := (w , z) _{\C ^d}$, all inner products are assumed to be conjugate linear in the first argument. The Drury-Arveson space is arguably the canonical several-variable generalization of the classical Hardy space $H^2 _1 = H^2 (\D )$, at least from an operator-theoretic viewpoint \cite{Sha2013,ArvIII,Arv1998curv}.

We will use the notation $H^\infty _d \otimes \scr{L} (\J , \K ) := \mr{Mult} (H^2 _d \otimes \J , H^2 _d \otimes \K)$ (the multiplier
 spaces are the closure of this algebraic tensor product in the weak operator topology). The \emph{Schur class} is the closed unit ball of the multiplier space: $\scr{S} _d (\J , \K ) := [H^\infty _d \otimes \scr{L} (\J , \K )] _1$.  In the single variable ($d=1$) and scalar-valued ($\mc{H}= \C $) setting we recover the classical Hardy space $H^2 (\D )$ and algebra $H^\infty (\D )$ of analytic functions on the disk which embed isometrically into $L^2 , L^\infty $ of the unit circle $\T$, respectively, by taking non-tangential boundary limits \cite{Hoff}.

As in the single-variable case, given any Schur-class function $b \in \scr{S} _d (\J , \K ) $, one can construct a positive kernel function $k^b$ on $\B ^d \times \B ^d$, the de Branges-Rovnyak kernel:
$$ k^b (z,w) := \frac{I - b(z) b (w) ^* }{1-zw^*} \in \scr{L} (\K); \quad \quad z,w \in \B^d, $$ and the corresponding RKHS $\scr{H} (b) := \mc{H} (k^b)$ is called
the de Branges-Rovnyak space associated to $b$.  It is straightforward to show that a bounded analytic (operator-valued) function $b$ on $\B ^d$ belongs to the Schur class if and only if the above formula
defines a positive kernel on $\B ^d$ \cite[Theorem 2.1]{Ball2001-lift}, and we will use this fact frequently in the sequel.

It is easy to check that $k-k^b$ (with $k$ the Szeg\"{o} kernel of vector-valued Drury-Arveson space) is a positive kernel function so that standard vector-valued RKHS theory implies that $\scr{H} (b)$ is contractively contained in $H^2 _d \otimes \K$ \cite{Paulsen-rkhs}. In the single-variable setting, any de Branges-Rovnyak space $\scr{H} (b)$ is co-invariant for the shift \cite{Sarason-dB}. The natural several-variable generalization of the shift is the Arveson \emph{d-shift}, $S : H^2 _d \otimes \C ^d \rightarrow H^2 _d$, a row partial isometry on $H^2 _d$, also denoted by $S = (S_1 , ... , S_d)$ \cite{ArvIII}. The component operators
of $S$ mutually commute ($S$ is a $d$-contraction) and act as multiplication by the independent variables on $H^2 _d$:
$$ (S \mbf{F}) (z) = z_1 F_1 (z) + ... + z_d F_d (z); \quad \quad \mbf{F} = (F_1 , ... , F_d ) \in H^2 _d \otimes \C ^d. $$ In contrast to the classical case, multi-variable de Branges-Rovnyak spaces are generally not co-invariant for the component operators of the Arveson $d-$shift \cite{Ball2008}. As described in the upcoming Subsection \ref{Gleasonsub}, the appropriate several-variable analogue of the adjoint of the restricted backward shift will be a \emph{Gleason solution} for $\scr{H} (b)$, and (extremal) Gleason solutions will play the role of the model operator in our commutative model for CCNC row contractions.
\subsection{Herglotz spaces} \label{HerglotzSection}
It will be convenient to define a second reproducing kernel Hilbert space $\scr{H} ^+ (H_b)$ associated to suitable `square' $b \in \scr{S} _d (\mc{H})$.

In general we will say that a contraction $T \in \scr{L} (\J , \K )$ is \emph{pure} or \emph{purely contractive} if $\| T g \| < \| g\|$ for all $g$ in $\J$, and that $T$ is \emph{strict} or \emph{strictly contractive} if $\| T \| <1$.   A similar argument to \cite[Proposition 2.1, Chapter V]{NF} (combined with a simple argument using automorphisms of the ball $\B ^d$) shows that any $b \in \scr{S} _d (\mc{H} , \J)$ decomposes as $b = b_0 + b_1$ on $\mc{H} = \mc{H} _0 \oplus \mc{H} _1$ where $b_0 (z) := b(z) | _{\mc{H} _0}$ is purely contractive and $b_1$ is a constant isometry on $\B ^d$ from $\mc{H} _1$ onto its range in $\J$.
We say a Schur-class $b \in \scr{S} _d (\J , \K )$ is \emph{purely contractive} or \emph{strictly contractive} if $b(z)$ is a pure or strict contraction, respectively, for all $z \in \B ^d$. As discussed in \cite[Section 1.8]{JM}, $b$ is strictly contractive if and only if $b(0)$ is a strict contraction. (This follows from the Schwarz Lemma for contractive analytic functions on $\B ^d$ combined with an operator-M\"{o}bius transformation argument, see Lemma \ref{ballauto}).

We say that $b \in \scr{S} _d (\mc{H})$ is \emph{non-unital} if $I - b(z)$ is invertible for all $z \in \B ^d$. Any strictly contractive $b \in \scr{S} _d (\mc{H} )$ is certainly non-unital. The \emph{Herglotz-Schur} class, $\scr{S} _d ^+ (\mc{H})$, is the set of all $\scr{L} (\mc{H})$-valued analytic functions on $\B ^d$ such that the \emph{Herglotz kernel},
$$ K ^H (z,w) := \frac{1}{2} \frac{H(z) + H(w) ^*}{1 -zw^*} \in \scr{L} (\mc{H} ) ; \quad \quad z,w \in \B ^d, $$ is an operator-valued positive kernel function. Any Herglotz-Schur function necessarily has positive real part. In particular, if $b \in \scr{S} _d (\mc{H})$ is square and non-unital, and
$$ H _b (z) := (I - b(z) ) ^{-1} (I + b(z)), $$ then,
\ba & & K^b (z,w) :=  K^{H_b} (z,w) \nn \\
 &= & \frac{1}{2} (1-zw^* ) ^{-1} \left(  (I -b(z)) ^{-1} (I + b(z) ) + (I+ b(w) ^* ) (I -b(w)^* ) ^{-1} \right) \nn \\
& = & \frac{1}{2} k (z,w)  (I -b(z) ) ^{-1} \left( (I + b(z) ) (I - b(w) ^* ) + (I - b(z) ) (I + b(w) ^*) \right) (I -b(w) ^* ) ^{-1} \nn \\
& = & \frac{1}{2} k (z,w)  (I -b(z) ) ^{-1} \left( I + b(z) - b(w) ^* - b(z) b(w) ^* \right. \nn \\
& & \left. + I - b(z) + b(w) ^* - b(z) b(w) ^* \right) (I -b(w) ^* ) ^{-1} \nn \\
& = & (I - b(z) ) ^{-1} k^b (z,w) (I - b(w) ^* ) ^{-1}, \nn \ea
 is a positive kernel so that $H_b \in \scr{S} _d ^+ (\mc{H})$.
As described in \cite[Section 1.8]{JM}, the maps
$$ b \mapsto H_b := (I -b) ^{-1} (I+b ); \quad \quad \mbox{and} \quad \quad H \mapsto b_H := (H+I) ^{-1} (H -I ), $$ are compositional inverses
and define bijections between $\scr{S} _d ^+ (\mc{H})$ and non-unital elements of $\scr{S} _d (\mc{H} )$. If $H = H_b \in \scr{S} _d ^+ (\mc{H} )$ we call
$\scr{H} ^+ (H _b) := \mc{H} (K ^b)$, the \emph{Herglotz space} of $b$.

By standard vector-valued RKHS theory, the above relationship between the de Branges-Rovnyak and Herglotz kernels for non-unital $b \in \scr{S} _d (\mc{H} )$ implies
that there is a unitary multiplier $U_b : \scr{H} (b) \rightarrow \scr{H} ^+ (H_b)$.

\begin{lemma} \label{ontoisomult}
The map $U_b : \scr{H} (b) \rightarrow \scr{H} ^+ (H _b) $ defined by multiplication by
\be  U_b (z) := (I-b(z) ) ^{-1}, \ee is an onto isometry. The action of $U_b$ on point evaluation kernels is
$$ U_b k_z ^b = K_z ^b (I -b(z) ^* ). $$
\end{lemma}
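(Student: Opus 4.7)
The plan is to compute the action of multiplication by $(I-b(z))^{-1}$ on the point-evaluation kernels $k_z^b h$, verify the isometry property via the factorization identity for $K^b$, and deduce surjectivity from the invertibility of $I-b(z)$.

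First, given $h \in \H$ and $z,w\in\B^d$, one has $(k_z^b h)(w) = k^b(w,z) h$, so pointwise multiplication by $(I-b(\cdot))^{-1}$ yields
$$(U_b k_z^b h)(w) = (I-b(w))^{-1} k^b(w,z) h = (I-b(w))^{-1} k^b(w,z) (I-b(z)^*)^{-1}(I-b(z)^*)h = K^b(w,z)(I-b(z)^*) h,$$
using the factorization $K^b(w,z) = (I-b(w))^{-1} k^b(w,z)(I-b(z)^*)^{-1}$ stated just before the lemma. This simultaneously shows that $U_b k_z^b h$ lies in $\scr{H}^+(H_b)$ and establishes the kernel identity $U_b k_z^b = K_z^b(I-b(z)^*)$.

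Next, I would compute, on the span of kernel functions,
$$\langle U_b k_z^b h,\, U_b k_w^b h'\rangle_{\scr{H}^+(H_b)} = \langle (I-b(z)^*)h,\, K^b(z,w)(I-b(w)^*)h'\rangle_\H,$$
and, inserting the factorization of $K^b$ once more, the outer factors telescope to leave $\langle h, k^b(z,w) h'\rangle_\H = \langle k_z^b h, k_w^b h'\rangle_{\scr{H}(b)}$. By density of the span of kernel functions together with the fact that convergence in a RKHS implies pointwise convergence (and pointwise multiplication by the bounded operator $(I-b(w))^{-1}$ is continuous for each $w$), the unique continuous extension of $U_b$ to $\scr{H}(b)$ is an isometry that coincides with pointwise multiplication by $(I-b(z))^{-1}$ on all of $\scr{H}(b)$.

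For surjectivity, since $I-b(z)^*$ is invertible for each $z\in\B^d$, the image $\{K_z^b(I-b(z)^*)h : z\in\B^d,\, h\in\H\}$ coincides with the full set of kernel functions $\{K_z^b h' : z\in\B^d,\, h'\in\H\}$, whose linear span is dense in $\scr{H}^+(H_b)$. An isometry with dense range is onto, completing the argument. The calculation is largely bookkeeping; the crucial point is that the factorization of the Herglotz kernel is engineered so that the bordering factors $(I-b)^{-1}$ and $(I-b^*)^{-1}$ cancel against the inserted $(I-b^*)$ factors in both the multiplier action on kernels and in the isometry computation.
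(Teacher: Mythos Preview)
Your proof is correct and is precisely the ``standard vector-valued RKHS theory'' argument the paper invokes in the paragraph preceding the lemma (the paper does not give an explicit proof). The key ingredient---the factorization $K^b(z,w) = (I-b(z))^{-1} k^b(z,w)(I-b(w)^*)^{-1}$---is exactly what the paper displays just before stating the lemma, and your kernel computation, isometry check, and surjectivity-via-density argument are the routine way to unpack that factorization.
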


It will be useful to consider the natural row partial isometry $V^b : \scr{H} ^+ (H_b) \otimes \C ^d \rightarrow \scr{H} ^+ (H_b)$ on the Herglotz space
of any square $b \in \scr{S} _d (\mc{H} )$ defined by
\be  z^* K_z ^b h:= \bbm \ov{z} _1 K_z ^b h \\ \vdots \\ \ov{z} _d K_z ^b h \ebm \ \stackrel{V^b}{\mapsto} \ (K_z ^b - K_0 ^b)h; \quad \quad h \in \mc{H}. \label{Hergpi} \ee
Verifying that this defines a partial isometry with
$$ \ker{V^b} ^\perp = \bigvee _{z \in \B ^d } z^* K_z ^b \mc{H}, \quad \quad \mbox{and} \quad \quad \ran{V^b} = \bigvee _{z\in \B^d} (K_z ^b - K_0 ^b) \mc{H},$$is a straightforward computation using the Herglotz kernel \cite[Section 2]{JM}.

\subsection{Gleason solutions} \label{Gleasonsub}

As discussed previously, the de Branges-Rovnyak spaces $\scr{H} (b)$ for arbitrary $b \in \scr{S} _d (\J, \K)$
are generally not co-invariant for the component operators $S_j$ of the Arveson $d-$shift \cite{Ball2008}. Instead, the appropriate replacement for the `adjoint of the restricted backward shift' in the several-variable theory is a contractive solution to the \emph{Gleason problem} \cite{Glea1964,Alpay2002,Ball2008,Ball2010,Ball2011dBR}:

\begin{defn} \label{Gledef}
Let $b \in \scr{S} _d (\J, \K)$ be a Schur-class function. A row contraction $X \in \scr{L} ( \scr{H} (b) \otimes \C ^d , \scr{H} (b) )$ solves the \emph{Gleason problem} in $\scr{H} (b)$ if
\be z (X^*f) (z) := z_1 (X_1 ^* f ) (z) + ... + z_d (X_d ^* f ) (z) = f (z) - f(0); \quad \quad \forall f \in \scr{H} (b). \label{Gleasoldef} \ee We say that a \emph{Gleason solution} $X$ is \emph{contractive} if $$ X X ^* \leq I - k_0 ^b (k_0 ^b) ^*,$$ and we say that $X$ is \emph{extremal} if equality holds in the above.
\end{defn}

It is easy to check that for any row contraction $X$ on $\scr{H} (b)$, the Gleason solution condition (\ref{Gleasoldef}) above is equivalent to:
\be (I -Xz^* ) ^{-1} k_0 ^b = k_z ^b ; \quad \quad \forall \ z \in \B ^d, \label{Gleker} \ee and this property will also be used frequently in the sequel.
Given $z \in \B ^d$, and any Hilbert space $\mc{H}$, we will often view $z$ as a strict contraction from $\mc{H} \otimes \C ^d $ into $\mc{H}$:
Define $z^* : \mc{H} \rightarrow \mc{H} \otimes \C ^d$ by $z^* h := (\ov{z} _1 h , ... , \ov{z} _d h ) ^T  \in \mc{H} \otimes \C ^d$ and
for any $\mbf{h} \in \mc{H} \otimes \C ^d$, the adjoint map $z := (z^* ) ^* \in \scr{L} (\mc{H} \otimes \C ^d , \mc{H} )$ obeys:
$$ z \mbf{h} = z \bbm h_1 \\ \vdots \\ h_d \ebm  = z_1 h _1 + ... + z_d h_d, \quad \ \mbox{and} \ \| z \|  ^2 = zz^* := \left( z,z \right)_{\C ^d} <1. $$

\begin{remark} \label{GSrowpi}
    If $X$ is any extremal Gleason solution for $\scr{H} (b)$ where $b \in \scr{S} _d (\J , \K )$ obeys $b(0) = 0$, then
$$XX^* = I - k_0 ^b (k_0 ^b)^*, $$ is a projection so that $X$ is a (row) partial isometry on $\scr{H} (b)$. In general, $P_0 := I -  k_0 ^b k^b (0,0) ^{-1} (k_0 ^b ) ^*$
is the projection onto the subspace of all functions $f \in \scr{H} (b)$ such that $ f(0) = 0$, and
$$ k^b (0, 0) = I - b(0) b(0) ^*, $$ so that an extremal Gleason solution $X$ for $\scr{H} (b)$ is a row partial isometry if and only if $b(0) =0$.
\end{remark}

In the case where $d=1$, the unique solution to equation (\ref{Gleasoldef}) is the adjoint of the restriction of the backward shift $S^*$ to $\scr{H} (b)$, so that adjoints of Gleason solutions are natural analogues of the restricted backward shift in the several-variable setting. Many references define a Gleason solution for $\scr{H} (b)$ as the adjoint of our definition above, we prefer to view it as a row contraction from several copies of a Hilbert space into itself. Contractive solutions $X$ to the Gleason problem in $\scr{H} (b)$ always exist, although they are in general non-unique \cite{Ball2010,JM}. Also note that the component operators of a contractive Gleason solution $X$ for $\scr{H} (b)$ are generally non-commuting \cite{Ball2008}. In fact, the existence of a commuting contractive Gleason solution $X$ for $\scr{H} (b)$ is equivalent to co-invariance of $\scr{H} (b)$ with respect to the component operators of the Arveson $d$-shift, and in this case $X = (S^* | _{\scr{H} (b)} ) ^*$ is a contractive commuting
Gleason solution for $\scr{H} (b)$ \cite[Theorem 3.5]{Ball2008}. This happens, for example, if $b \in \scr{S} _d (\J , \K)$ is an \emph{inner multiplier}, \emph{i.e.} multiplication by
$b$, $M_b : H^2 _d \otimes \J \rightarrow H^2 _d \otimes \K$ is a partial isometry so that $\scr{H} (b) \subset H^2 _d \otimes \K$ is a co-invariant \emph{model subspace} of
vector-valued Drury-Arveson space \cite{ArvIII,BES2005char,GRS2002,McTrent2000} (see Remark \ref{innerGS} below).

Similarly we define contractive Gleason solutions for any $b \in \scr{S} _d (\J  , \K)$:
\begin{defn}
 A linear map $\mbf{b} \in \scr{L} (\J , \scr{H} (b)  \otimes \C ^d )$, $\mbf{b} = \begin{bmatrix} b_1 \\ \vdots \\  b_d \end{bmatrix}$, $b_j \in \scr{L} (\J , \scr{H} (b))$, $1 \leq j \leq d$, is a solution to the Gleason problem for $b \in \scr{S} _d (\J , \K)$ provided that
$$ b(z) - b(0) = z \cdot \mbf{b} (z) := \sum _{j=1} ^d z_j b_j (z).$$ We say that $\mbf{b}$ is a \emph{contractive} Gleason solution for $b$ if
$$ \mbf{b}^* \mbf{b} \leq I - b(0) ^* b(0),$$ and an \emph{extremal} Gleason solution for $b$ if equality holds in the above.
\end{defn}

There is a surjection $\mbf{b} \mapsto X(\mbf{b})$ of contractive Gleason solutions for $b \in \scr{S} _d (\J , \K)$ onto contractive Gleason solutions for $\scr{H} (b)$ given by the formula
 \be X(\mbf{b}) ^* k_z ^b := z^* k_z ^b - \mbf{b} b(z) ^*; \quad \quad z \in \B^d. \label{GSHbb} \ee This surjection is injective if and only if $\bigcap _{z \in \B ^d} \ker{b(z)} = \{ 0 \}$, and it preserves extremal Gleason solutions. This follows as in \cite[Section 5]{JM} (which considers the case $\J = \K$).

If $b \in \scr{S} _d (\mc{H})$ is square and non-unital, then there is a bijection between contractive extensions $D$ of the row partial isometry $V^b$ defined on the Herglotz
space $\scr{H} ^+ (H_b)$ (\emph{i.e.} $D : \scr{H} ^+ (H_b) \otimes \C ^d \rightarrow \scr{H} ^+ (H_b)$, and $D(V^b) ^* V^b = V^b$) and contractive Gleason solutions $\mbf{b} = \mbf{b} [D]$ given by
\be \mbf{b} [D] := (U_b^* \otimes I_d ) D^* K_0 ^b (I -b(0) ), \label{Gleformb} \ee see \cite[Section 5]{JM}. If $X = X (\mbf{b} [D])$, we will simply write $X = X [D]$.

To apply the above parametrizaton of Gleason solutions to arbitrary Schur-class multipliers, it will be useful to define the \emph{square extension} of any $b \in \scr{S} _d (\J ,\K)$: Any such $b$ coincides with an element $b \in \scr{S} _d (\J ' , \K ' )$ where $\J ' \subseteq \K ' $ or $\K ' \subseteq \J '$. Given any $b \in \scr{S} _d (\J , \K)$, and assuming $\J \subseteq \K$ or $\K \subseteq \J$, the \emph{square extension}, $[b]$, of $b$ is
$$  [b] := \left\{  \arraycolsep=2pt\def\arraystretch{1.2} \begin{array}{cc} \bbm b & 0 _{\K \ominus \J , \K } \ebm \in \scr{S} _d (\K ); & \J \subset \K \\
\bbm b \\ 0 _{\J , \J \ominus \K} \ebm \in \scr{S} _d (\J ) ; & \K \subset \J \end{array}. \right. $$

\begin{remark} \label{sqGS}
\bn
\item  If $J \subseteq \K$, then it follows that $\scr{H} (b) = \scr{H} ([b])$. In this case it follows that
$\mbf{b}$ is a contractive Gleason solution for $b$ if and only if there is a contractive Gleason solution $\mbf{[b]}$ for $[b]$ so that $\mbf{b} = \mbf{[b]} | _\J$. Moreover if $\mbf{[b]}$ is extremal, so is $\mbf{b} = \mbf{[b]} | _\J$.
\item If $\K \subseteq \J$ then $\scr{H} ([b]) = \scr{H} (b) \oplus H^2 _d \otimes (\J \ominus \K )$. In this case $\mbf{b}$ is a contractive Gleason solution
for $b$ if and only if $\mbf{[b]} := \mbf{b} \oplus \mbf{0}$ is a contractive Gleason solution for $b$. Here $\mbf{0} : \J \ominus \K \rightarrow H^2 _d \otimes (\J \ominus \K )$ sends every vector to $0 \in H^2 _d \otimes (\J \ominus \K )$. Again this $\mbf{[b]}$ is extremal if and only if $\mbf{b} = \bbm I_{\scr{H} (b)} \otimes I_d, &   0 \ebm  \mbf{[b]}$ is extremal.
\en
\end{remark}

The statement of Theorem \ref{main1} implicitly claims that any extremal Gleason solution $X$ for $\scr{H} (b)$, $b \in \scr{S} _d (\J , \K)$, is a CCNC row contraction.  This is easily established using the extremal condition:

\begin{lemma} \label{GSCCNC}
Let $b \in \scr{S} _d (\J , \K)$ be a multi-variable Schur-class function. Any extremal Gleason solution, $X$, for $\scr{H} (b)$ is a CCNC row contraction.
\end{lemma}

\begin{proof}
Since $X$ is extremal, $$ X X^* = I - k_0 ^b (k_0 ^b) ^*, $$ and since $D_{X^*} ^2 = I -XX^*$,  it follows that $\ran{D_{X^*}}$ is the range of the projection
$$ k_0 ^b k^b (0,0) ^{-1} (k_0 ^b) ^*.$$ Since $X$ is a contractive Gleason solution, it follows that
\ba \bigvee _{z \in \B^d} (I -Xz^* ) ^{-1} \ran{D_{X^*}} & = & \bigvee (I -Xz^* ) ^{-1} k_0 ^b \K  \nn \\
& = & \bigvee _{z\in \B ^d} k_z ^b \K  = \scr{H} (b), \quad \quad \mbox{(by equation (\ref{Gleker}))} \nn \ea so that $X$ is CCNC.
\end{proof}

\subsection{Extremal Gleason solutions} \label{extGSsect}

Since extremal Gleason solutions for $\scr{H} (b)$, $b \in \scr{S} _d (\J , \K )$ will provide a model for arbitrary CCNC row contractions, it is natural to ask whether extremal Gleason solutions always exist.  The results below show that there are large classes of $b \in \scr{S} _d (\J , \K)$, and hence $\scr{H} (b)$, which admit extremal Gleason solutions. (Recall that if $\mbf{b}$ is an extremal Gleason solution for $b \in \scr{S} _d (\J , \K )$, then $X (\mbf{b})$ is an extremal Gleason solution for $\scr{H} (b)$.) We expect that any Schur class $b \in \scr{S} _d (\J , \K)$ admits an extremal Gleason solution, but we do not have a proof of this in full generality.

\begin{lemma} \label{extGSlem}
Given $b \in \scr{S} _d (\mc{H})$, and a row contractive $D \supseteq V^b$, $\mbf{b} [D]$ is extremal if and only if $D$ is a co-isometry.
\end{lemma}
Recall that $\mbf{b} [D]$ is extremal implies that $X[D]$ is extremal. In particular, if $V^b$ is itself a co-isometry, then $\scr{H} (b)$ has a unique contractive Gleason solution, and this solution is extremal. This property (of $V^b$ being co-isometric) is equivalent to $b$ being a quasi-extreme Schur multiplier, as defined in \cite{Jur2014AC,JM}. See \cite{Jur2014AC,JM} and Section \ref{QEsect} for examples of quasi-extreme multipliers, equivalent conditions, and details. 
\begin{proof}
If $D$ is a co-isometric extension of $V^b$, it follows readily from Formula (\ref{Gleformb}), that $\mbf{b} [D]$ is extremal. 

Conversely, extremality of $\mbf{b}[D]$ implies:
$$ (K_0 ^b ) ^* D D ^* K_0 ^b = K^b (0,0) = (K_0 ^b ) ^* K_0 ^b. $$ 
Then, for any $z \in \B ^d$,
\ba (K_z ^b) ^* D D ^* K_0 ^b & = &  (D D^* (K_z ^b - K_0 ^b)) ^* K_0 ^b + (K_0 ^b) ^* D D ^* K_0 ^b  \nn \\
& = & (K_z ^b - K_0 ^b ) ^* K_0 ^b + (K_0 ^b ) ^* K_0 ^b = (K_z ^b) ^* K_0 ^b. \nn \ea This proves that $D D^* K_0 ^b = K_0 ^b$ so that 
\ba DD^* K_z ^b &=& DD ^* (K_z ^b - K_0 ^b) + DD^* K_0 ^b \nn \\
& = & K_z ^b -K_0 ^b + K_0 ^b = K_z ^b, \nn \ea and $DD^* = I$.
\end{proof}

\begin{remark}
By the above lemma, $b \in \scr{S} _d (\mc{H} )$ (and hence $\scr{H} (b)$) has extremal Gleason solutions if and only if $\dim{\ker{V^b}} \geq \dim{\ran{V^b} ^\perp}$ (so that $V^b$ has co-isometric extensions).  The proofs of \cite[Proposition 4.12, Proposition 4.15]{JM} can be adapted to show this is the case for large classes of $b \in \scr{S} _d (\mc{H})$:
\end{remark}
\begin{prop} \label{Extprop1}
If $b \in \scr{S} _d (\mc{H})$ is such that $\dim{\scr{H} (b)} = \infty$ and $\dim{\mc{H} } < \infty$, then $\dim{\ker{V^b}} = \infty$ and $\dim{\ran{V^b} ^\perp} \leq \dim{\mc{H} }$.
\end{prop}
\begin{proof}
Set $V:= V^b$ and calculate,
$$ (V_k ^* V_j ^* - V_j ^* V_k ^*) K_z ^b = (\ov{z} _j V_k ^* - \ov{z} _k V_j ^* + V_k ^* V_j ^* - V_j ^* V_k ^* ) K_0 ^b; \quad \quad z \in \B ^d. $$
Since $\dim{\mc{H}} < \infty$ and $K_0 ^b \in \scr{L} (\mc{H} , \scr{H} ^+ (H_b))$, the above commutators have finite rank. It is also easy to check that $\ran{V} ^\perp$ consists of $\mc{H}$-valued constant functions so that $\dim{\ran{V}^\perp} \leq \dim{\mc{H}} < \infty$, by assumption. Since we assume that $\scr{H} (b)$, and hence $\scr{H} ^+ (H_b)$ are infinite dimensional, the $V_k$ cannot all have finite rank by the definition of $V=V^b$. It follows that the image of $V$ under the quotient $*$-homomorphism, $\pi$, onto the Calkin algebra (quotient by the compact operators) is a non-zero row co-isometry with commuting component operators. If $\ker{V^b}$ is finite dimensional, then $\pi (V)$ is a row isometry (in fact Cuntz unitary) with commuting component operators, and this is readily checked to be impossible.
\end{proof}
Similarly, 
\begin{prop} \label{Extprop2}
Let $\mbf{b} = (b_1 , ... , b_d) ^T$ be any contractive Gleason solution for $b \in \scr{S} _d (\mc{H})$, $d\geq 2$. Then $\dim{\ker{V^b}} \geq \frac{d (d-1)}{2} \cdot \dim{\scr{H} (b) \ominus \bigvee _{j=1} ^d b_j \mc{H}}$.
\end{prop}

\begin{proof}
This is easily established using the proof of \cite[Proposition 4.4]{JM}:
Let $X = X (\mbf{b})$ be a contractive Gleason solution for $\scr{H} (b)$, where $\mbf{b}$ is any contractive Gleason solution for $b$ (see Equation (\ref{GSHbb})). Then $\mbf{b} = (b_1, ...,b_d) ^T \in  \scr{H} (b) \otimes \C ^d$, $d \geq 2$. Choose any $f \in \scr{H} (b)$ orthogonal to the linear span of the $b_j, \ 1 \leq j \leq d$. It follows that
\ba \ip{ k_z ^b}{ (X_j f) } & = & \ip{\ov{z} _j k_z ^b - b_j b(z) ^*  }{f} \nn \\
& = & z_j f  (z)_\mc{H}. \nn \ea This proves that $S_j f  \in \scr{H} (b)$ for any $1 \leq j \leq d$. This in turn implies that $h_j := (I-b) ^{-1} S_j f \in \scr{H} ^+ (H_b)$ and if we define $\mbf{h} := -h_j \otimes e_i + h_i \otimes e_j \in \scr{H} ^+ (H_b) \otimes \C ^d$, where $\{e_k \}$ is the standard basis of $\C ^d$ and $i\neq j$, we then have that
\ba \ip{z^* K_z ^b }{\mbf{h}} & = & - z_i h_j (z) +z_j h_i (z) \nn  \\
& = & (-z_i z_j + z_j z_i ) (I-b(z) ) ^{-1} f(z)  = 0. \nn \ea 
It follows that for any distinct pair $(i,j); \ 1 \leq i,j \leq d$, and any $f \perp \bigvee _{j=1} ^d \ran{b_j} $, we can construct an element $\mbf{h} ^{(i,j)} \in \ker{V^b}$.
\end{proof}

As discussed in Remark \ref{sqGS}, if the square extension $[b]$, of any $b \in \scr{S} _d (\J, \K)$ has extremal Gleason solutions, so does $b$. The above two propositions now imply that there are large classes of $b \in \scr{S} _d (\J , \K)$ (and hence $\scr{H} (b)$) which admit extremal Gleason solutions.  
\begin{remark} \label{innerGS}
    If $b \in \scr{S} _d (\J , \K)$ is an inner multiplier ($M_b : H^2 _d \otimes \J \rightarrow H^2 _d \otimes \K$ is a partial isometry), then 
$\scr{H} (b) = \ran{M _b} ^\perp$ is co-invariant for the component operators of the (vector-valued) Arveson $d$-shift, $S \otimes I _{\mc{K}}$, and 
$$ X_j ^* := S^* _j \otimes I _{\mc{K}} | _{\scr{H} (b)}, $$ defines an extremal Gleason solution (with commuting component operators). Indeed, 
\ba I- X X^* &=& P_{\ran{M_b}} ^\perp (I - S S^*)  P_{\ran{M_b}} ^\perp  \nn \\
& = & (I - M_b M_b ^*) k_0  k_0 ^* (I -M_b M_b ^* ) \nn \\ 
& = & k_0 ^b (k_0 ^b) ^*. \nn \ea 
\end{remark}

\subsection{de Branges-Rovnyak model via transfer-function theory}
\label{BBFsect}

In this section we will show that Theorem \ref{main1} (\emph{i.e.} Theorem \ref{main1intro} from the Introduction) is a straightforward consequence of the multi-variable transfer function realization theory of \cite{Ball2007trans}.

As part of \cite[Theorem 1.3]{Ball2007trans}, J.A. Ball, V. Bolotnikov and Q. Fang proved that a $\scr{L} (\J , \K )$-valued function $b$ on $\B ^d$ belongs to the multi-variable, operator-valued Schur class $\scr{S} _d (\J , \K)$, if and only if there is an auxiliary Hilbert space, $\mc{H}$, and a contractive \emph{colligation}, $U$:
$$ U := \bbm A & B \\ C & D \ebm =: \bbm A_1 & B_1 \\ \vdots & \vdots \\ A_d & B_d \\ C & D \ebm : \bbm \mc{H} \\ \J \ebm \rightarrow \bbm \mc{H} \otimes \C ^d \\ \K \ebm, $$ so that $b$ has the \emph{transfer function realization}:
$$ b(z) = D + C (I-zA) ^{-1} zB. $$
Moreover in \cite[Section 3]{Ball2007trans}, the authors construct a canonical \emph{functional model realization} of $b \in \scr{S} _d (\J , \K )$, providing a natural extension of the classical transfer function theory of de Branges and Rovnyak \cite{dBss,dBmodel}: If $A := X^*$ is the adjoint of any contractive Gleason solution for $\scr{H} (b)$, $C:= (k_0 ^b) ^*$, and $D:= b(0)$, then there exists a (essentially unique \cite[Corollary 2.9]{Ball2007trans}) contractive Gleason solution $\mbf{b}=:B$ for $b$ so that
$$ U_{dBR} := \bbm A & B \\ C & D \ebm = \bbm X^* & \mbf{b} \\ (k_0 ^b ) ^* & b(0) \ebm : \bbm \scr{H} (b) \\ \J \ebm \rightarrow \bbm \scr{H} (b) \otimes \C ^d \\ \K \ebm, $$ is a contractive colligation providing a transfer-function realization for $b$. In fact, \cite[Theorem 7.13]{JMfree} proves that $A^* = X = X(\mbf{b})$ is the contractive Gleason solution for $\scr{H} (b)$ corresponding to the contractive Gleason solution $B = \mbf{b}$ for $b$ as in Subsection \ref{Gleasonsub}.

\begin{remark} \label{isomextr}
In this paper we focus on extremal Gleason solutions $X = X(\mbf{b})$ as these will provide a model for arbitrary CCNC row contractions. It is easy to check that $\mbf{b}$ (and hence $X(\mbf{b})$) are extremal if and only if the corresponding canonical functional model colligation
$$ U_{dBR} = \bbm X (\mbf{b}) ^* & \mbf{b} \\ (k_0 ^b) ^* & b(0) \ebm,  $$ is an isometry \cite[Remark 4.18]{JM}.
\end{remark}

As described in \cite[Section 3]{Ball2007trans}, any canonical functional model realization for $b \in \scr{S} _d (\J , \K )$ is \emph{weakly co-isometric} and \emph{observable}. A colligation $U$, as above is weakly co-isometric if it is co-isometric on a certain subspace \cite{Ball2007trans}. By \cite[Proposition 1.5]{Ball2007trans}, $U$ is weakly co-isometric if and only if:
$$ k^b (z,w) = C^* (I-A^*z) ^{-1} (I-w^* A) ^{-1} C.$$ This characterization and Formula (\ref{Gleker}) show that any canonical functional model colligation $U_{dBR}$ is always weakly-coisometric.

A contractive colligation $U$ is said to be \emph{observable} if
$$ C (I -Az) ^{-1} x =0 \quad \quad \forall \  z \in \B ^d $$ if and only if $x \equiv 0$ \cite[Section 2]{Ball2007trans}. Equivalently,
\be \bigvee _{z \in \B ^d } (I -z^* A^* ) ^{-1} C^* \K = \mc{H}. \label{observable} \ee In the case where $U= U_{dBR}$ is a functional model colligation, $A = X^*$ is a contractive Gleason solution and $C = (k_0 ^b) ^*$, so that, again by Formula (\ref{Gleker}),
$$ \bigvee _{z \in \B ^d} (I -z^* A^* ) ^{-1} C^* \K = \bigvee _{z \in \B ^d} k_z ^b \K = \scr{H} (b).$$ This shows that $U_{dBR}$ is observable.

If $T$ is any CCNC row contraction on $\mc{H}$, consider the Julia operator:
$$ U_T := \bbm T^* & D_T \\ D_{T^*} & -T \ebm : \bbm \mc{H} \\ \ran{D_T} \ebm \rightarrow \bbm \mc{H} \otimes \C ^d \\ \ran{D_{T^*}} \ebm. $$ It is easy to
verify that $U_T$ is an onto isometry, so that in particular, it is a weakly co-isometric colligation. Since $T$ is CCNC,
$$ \mc{H} = \bigvee _{z\in \B ^d} (I -Tz^* ) ^{-1} \ran{D_{T^*}}, $$ proving that $U_T$ is also observable (by Formula \ref{observable}).

Let $b_T \in \scr{S} _d ( \ran{D_T} , \ran{D_{T^*}} )$ be the transfer-function corresponding to $U_T$. Since $U_T$ is observable and weakly co-isometric, \cite[Theorem 3.4]{Ball2007trans} implies that $U_T$ is unitarily equivalent to a canonical functional model colligation, $U_{dBR}$, for $b_T$.
In particular, $T$ is unitarily equivalent to $X$, a contractive Gleason solution for $\scr{H} (b_T)$. Finally, since $U_T$, and hence $U_{dBR}$ is an isometry, it follows as in Remark \ref{isomextr} that $X$ is necessarily an extremal Gleason solution, and this completes a proof of Theorem \ref{main1}.

\section{Completely non-coisometric row contractions} \label{CNCsection}

In this section we characterize completely non-coisometric (CNC) row contractions and motivate the definition of CCNC row contractions (contractions obeying the commutative CNC condition).  As in the classical setting a row contraction $T : \mc{H} \otimes \C ^d \rightarrow \mc{H}$ is CNC if there is no non-trivial co-invariant subspace $\mc{H} ' \subset \mc{H}$ (co-invariant for each component operator $T_k$ of $T$, $1\leq k \leq d$)
so that $T^* | _{\mc{H} '}$ is an isometry of $\mc{H} ' $ into $\mc{H} ' \otimes \C ^d$. As observed in \cite{Ball2011dBR}, a row contraction $T: \mc{H} \otimes \C ^d \rightarrow \mc{H}$ is CNC if and only if $ \bigvee _{\alpha \in \F ^d} T^\alpha \ran{D_{T^*}} = \mc{H}$, where, as before, the defect operator of any contraction $T : \J \rightarrow \K$ is $D_T := \sqrt{ I _\J - T^* T},$ and $\F ^d$ denotes the free monoid on $d$ generators. For convenience, we will provide a proof based on methods ultimately due to Kre\u{\i}n.

In the above, recall that the free semigroup (or monoid), $\F^d$, on $d \in \N$ letters, is the multiplicative unital semigroup of all finite products or \emph{words} in
the $d$ letters $\{1, ... , d \}$. That is, given words $\alpha := i_1 ... i_n$, $\beta := j_1 ... j_m$, $i_k, j_l \in \{1 , ... , d \}; \ 1\leq k \leq n, \ 1 \leq l \leq m$, their product $\alpha \beta $ is defined by concatenation:
$$ \alpha \beta = i_1 ... i_nj_1 ... j_m, $$ and the unit is the empty word, $\emptyset$, containing no letters. Given $\alpha = i_1 \cdots i_n$, we use the standard notation $|\alpha | = n$ for the length of the word $\alpha$. Let $\N ^d$ be the unital additive semigroup or monoid of $d$-tuples of non-negative integers. By the universality property of the free unital semigroup $\F ^d$, there is a unital semigroup epimorphism $\la : (\F ^d , \cdot ) \rightarrow (\N ^d , +)$, the \emph{letter counting map} which sends a given word $\alpha = i_1 \cdots i_n \in \F ^d $ to $\mbf{n} = (n_1, ... ,n_d) \in  \N ^d$ where $n_k$ is the number of times the letter $k$ appears in the word $\alpha$. Given any $\alpha = i _1 \cdots i _k \in \F ^d$, and any row contraction $T = (T_1, ... ,T_d)$, we use the standard notation
$$ T^\alpha := T_{\alpha _1} \cdots T_{\alpha _k}, $$ and for any $\n \in \N ^d$ we define the symmetrized monomial:
$$ T^\n := \sum _{\alpha ; \ \la (\alpha ) = \n} T^\alpha. $$

Initially, we focus on the case of a (row) partial isometry, $V : \mc{H} \otimes \C ^d \rightarrow \mc{H}$, and we define the \emph{restricted range spaces},
$$ \ra{V-z} := \ran{(V-z) V^*V}; \quad \quad z \in \C ^d, $$ as the range of $V -z$ restricted to the initial space of $V$. The orthogonal complement,
$\ra{V-z} ^\perp$, will be called a $z$-deficiency space or the \emph{$z-$defect space}. More generally, as in \cite{KVV}, consider the \emph{non-commutative} (NC) \emph{open unit ball}:
$$ \B ^d _{\N}:= \coprod _{n =1} ^\infty \B ^d _n; \quad \quad \B ^d _n := \left( \C ^{n \times n} \otimes \C ^d \right) _1.  $$
Elements $Z \in \B ^d _n$ are viewed as strict (row) contractions from $\C ^n \otimes \C ^d$ into $\C ^n$:
$$ Z =: (Z _1 , ... , Z _d ) ; \quad \quad Z_k \in \C ^{n \times n}. $$ In particular, $\B ^d _1 \simeq \B ^d = (\C ^d ) _1$ can be identified with the open unit ball of $\C ^d$.

\begin{defn}
    For any $Z \in \B ^d _n$, $n \in \N$, let
\ba \ra{V-Z} & := & \ran{ \left[ (V \otimes I_n ) - (I _\mc{H} \otimes Z) \right] (V ^* V \otimes I_n ) } \nn \\
& = & \left[ I _\mc{H} \otimes I_n - (I _\mc{H} \otimes Z ) (V^* \otimes I_n ) \right] \ran{V} \otimes \C ^n \nn \\
& =: & (I - ZV^* ) \ran{V} \otimes \C ^n, \nn \ea
\be  \mbox{where} \quad \quad ZV^* := V_1 ^* \otimes Z_1 + ... + V_d ^* \otimes Z_d \in \left ( \scr{L} ( \mc{H} \otimes \C ^n ) \right) _1. \label{ZXstar} \ee
\end{defn}

The main results of this section will be:

\begin{thm} \label{Kthm}
    Let $V : \mc{H} \otimes \C ^d \rightarrow \mc{H}$ be a row partial isometry. The subspace
$$ \mc{H} ' :=  \{ h \in \mc{H} | \ h \otimes \C ^n \subseteq \mc{H} ' _n \quad \forall \ n \in \N \} ; \quad \quad \mc{H} ' _n := \bigcap _{Z \in \B ^d _n } \ra{V -Z}, $$
is the largest co-invariant subspace for $V$ on which $V^*$ acts isometrically.

In particular, $V$ is CNC if and only if
\ba \mc{H} & = & (\mc{H} ' ) ^\perp = \bigvee _{\la \in \C ^n; \ Z \in \B ^d _n} \la (I -VZ^* ) ^{-1} \ran{V} ^\perp \otimes \C ^n  \nn \\
& = & \bigvee _{\alpha \in \F ^d} V^\alpha \ran{V} ^\perp. \nn \ea
\end{thm}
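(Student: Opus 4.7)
I would identify $\H'$ with the ``classical'' largest co-invariant subspace $\M^\perp$, where $\M := \bigvee_{\alpha \in \F^d} V^\alpha \ran{V} ^\perp$, via two parallel descriptions: a Kre\u{\i}n-type argument for the maximality of $\M^\perp$, and a direct non-commutative resolvent computation for $\ra{V-Z}^\perp$. Once $\H' = \M^\perp$ is shown, both the CNC characterization and the three-way equality in the theorem follow at once.

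\textbf{Classical maximal subspace.} Since $V^\alpha \ran{V}^\perp \subseteq \M$ for every $\alpha$, the space $\M$ is invariant under each $V_j$, so $\M^\perp$ is co-invariant. Using $\alpha$ equal to the empty word, $\ran{V}^\perp \subseteq \M$, hence $\M^\perp \subseteq \ran{V}$; as $V$ is a row partial isometry, $VV^*$ is the projection onto $\ran{V}$, so $VV^*h = h$ for $h \in \M^\perp$, which gives $\|V^*h\| = \|h\|$. Conversely, if $\N$ is co-invariant and $V^*$ is isometric on $\N$, then isometry forces $\N \subseteq \ran{V}$ so $\ran{V}^\perp \subseteq \N^\perp$, while co-invariance makes $\N^\perp$ invariant under each $V_j$, hence under each $V^\alpha$. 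Therefore $V^\alpha \ran{V}^\perp \subseteq \N^\perp$ for every $\alpha$, giving $\M \subseteq \N^\perp$, i.e.\ $\N \subseteq \M^\perp$. Thus $\M^\perp$ is the largest such subspace.

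\textbf{NC resolvent identity.} For $Z \in \Om _n$, a direct computation of the adjoint of $(V \otimes I_n - I_\H \otimes Z)(V^*V \otimes I_n)$ using $VV^*V = V$ yields
\[
\ra{V - Z}^\perp = (I - VZ^*) ^{-1} \bigl( \ran{V}^\perp \otimes \C ^n \bigr) \subseteq \H \otimes \C ^n,
\]
where the inverse exists because $\|VZ^*\| \le \|V\|\,\|Z\| < 1$. Expanding as a Neumann series gives $(I - VZ^*)^{-1} = \sum_{\alpha \in \F^d} V^\alpha \otimes (Z^*)^\alpha$, so
\[
\bigl\langle h \otimes v,\, (I - VZ^*)^{-1}(g \otimes u)\bigr\rangle \;=\; \sum _{\alpha \in \F ^d} \langle h, V^\alpha g\rangle \, \langle v, (Z^*)^\alpha u\rangle.
\]
The inclusion $\M^\perp \subseteq \H'$ is immediate: if $h \in \M^\perp$ then each $\langle h, V^\alpha g\rangle$ vanishes, so $h \otimes v \perp \ra{V-Z}^\perp$ for all $n, Z, v$. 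For the converse, $h \in \H'$ forces the whole sum to vanish for all $n, Z, u, v$; since it is a convergent power series in the entries of $Z$ on the open set $\Om _n$, each homogeneous component must vanish, giving the polynomial identity $\sum_{|\alpha|=k}\langle h, V^\alpha g\rangle \langle v, (Z^*)^\alpha u\rangle = 0$ for every $k$ and every $Z$.

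\textbf{Interpolation and conclusion.} The main technical step is to extract the individual coefficients from this polynomial identity, by choosing a non-commutative point $Z$ that separates words of each length. For fixed $k$, let $n$ be the number of words in $\F^d$ of length at most $k$, index a basis $\{e_\beta\}$ of $\C ^n$ by such words, and define $Z_j^*$ as the ``free word-shift'' sending $e_\beta \mapsto e_{\beta j}$ when $|\beta| < k$ and to zero otherwise. Then $(Z^*)^\alpha e_\emptyset = e_{\widetilde \alpha}$ for $|\alpha| = k$, where $\widetilde \alpha$ is the reverse of $\alpha$, so the vectors $\{(Z^*)^\alpha e_\emptyset\}_{|\alpha|=k}$ are distinct basis vectors. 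Setting $u = e_\emptyset$ and letting $v$ range over these basis vectors extracts $\langle h, V^\alpha g\rangle = 0$ for each $\alpha$ and $g \in \ran{V}^\perp$, so $h \in \M^\perp$. This gives $\H' = \M^\perp$; the same expansion shows $\bigvee_{n,\la,Z} \la (I - VZ^*)^{-1}(\ran{V}^\perp \otimes \C ^n) = \M$, yielding the three-way equality, and $V$ is CNC if and only if $\M^\perp = \{0\}$ if and only if $(\H')^\perp = \H$.
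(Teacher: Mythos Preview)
Your proof is correct and takes a genuinely different route from the paper. The paper first shows (Lemma~\ref{Lcontain}) that any co-invariant subspace $\scr{L}$ on which $V^*$ is isometric satisfies $\scr{L}\subseteq\H'$, and then proves directly that $\H'$ itself is co-invariant with $V^*$ isometric on it by passing to a co-isometric (Cuntz) dilation $U\supseteq V$ and invoking the operator identity $\bigvee_{\la,\tau,Z}\la(I-ZU^*)^{-1}\tau^*=\bigvee_{\alpha}(U^*)^\alpha$ (Lemma~\ref{NCfact}), which is established by an inductive $2n\times 2n$ block-matrix construction. You instead identify the classical maximal subspace $\M^\perp$ by an elementary argument and then prove $\H'=\M^\perp$ via the Neumann expansion $(I-VZ^*)^{-1}=\sum_\alpha V^\alpha\otimes(Z^*)^\alpha$ together with a truncated free-shift evaluation point that separates words of each length; this avoids dilation theory entirely and replaces the paper's inductive matrix trick with a single well-chosen NC point. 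One small point worth making explicit: your word-shift $Z$ need not lie in $\Om_n$ (indeed $\|Z\|^2=d$), but you have already argued that the homogeneous components vanish as polynomials in the entries of $Z^*$ on the open set $\Om_n$, hence identically, so evaluation at the non-contractive word-shift is legitimate. The paper's dilation argument is closer in spirit to Kre\u{\i}n's original proof for symmetric operators and yields the operator-span identity of Lemma~\ref{NCfact} as a by-product (used again in Corollary~\ref{CCNCcor}); your argument is more self-contained and arguably more transparent for this particular statement.
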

In the above, as in Subsection \ref{Gleasonsub}, we view any $\la \in \C^n$ as a row operator, $\la : \mc{H} \otimes \C ^n \rightarrow \mc{H}$. This theorem is a generalization of a characterization of CNU partial isometries due to Kre\u{\i}n \cite[Chapter 1, Theorem 2.1]{Krein}. Kre\u{\i}n's result is proven in the setting of unbounded symmetric operators. This can be restated in terms of partial isometries using the Cayley transform, a fractional linear transformation that implements  a bijection between partial
isometries and symmetric linear transformations \cite{Glazman}.

In the special case where $V$ is commutative, \emph{i.e.} a $d$-contraction, this yields:

\begin{thm} \label{CKthm}
    Let $V : \mc{H} \otimes \C^d \rightarrow \mc{H}$ be a $d$-partial isometry. The subspace
$$\mc{H} ' := \bigcap _{z \in \B ^d} \ra{V-z}, $$ is the largest co-invariant subspace for $V$ on which $V^*$ acts isometrically.

In particular, $V$ is CNC if and only if
\ba \mc{H} & = & (\mc{H} ' ) ^\perp = \bigvee _{z \in \B ^d = \B ^d _1} (I-Vz^* ) ^{-1} \ran{V} ^\perp \nn \\
& = & \bigvee _{\n \in \N ^d} V^\n \ran{V} ^\perp. \nn \ea
\end{thm}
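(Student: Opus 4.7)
The plan is to deduce Theorem \ref{CKthm} from the general Kre\u{\i}n-type Theorem \ref{Kthm} by showing that, for a $d$-partial isometry $V$, the non-commutative hierarchy $\H'_n := \bigcap_{Z \in \Om_n} \ra{V - Z}$ collapses to its first level. Concretely, I would prove that the subspace $\H'_K := \{h \in \H : h \otimes \C^n \subseteq \H'_n \text{ for all } n \in \N\}$ produced by Theorem \ref{Kthm} coincides with $\H' = \bigcap_{z \in \B^d} \ra{V - z}$ when the components of $V$ mutually commute.

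First I would record a dual description of the restricted range. Taking adjoints in the formula $\ra{V - Z} = (I - ZV^*)(\ran{V} \otimes \C^n)$ from the excerpt and inverting $I - VZ^*$ (which is invertible because $\| VZ^* \| < 1$) yields
\[ \ra{V - Z}^\perp = (I - VZ^*)^{-1}(\ran{V}^\perp \otimes \C^n); \]
the special case $n=1$ gives $\ra{V-z}^\perp = (I - Vz^*)^{-1}\ran{V}^\perp$, so $(\H')^\perp = \bigvee_{z \in \B^d}(I - Vz^*)^{-1}\ran{V}^\perp$.

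The core step is the inclusion $\H' \subseteq \H'_K$; the reverse is trivial from $n=1$. Fix $h \in \H'$. Expanding $(I - Vz^*)^{-1}$ as a norm convergent geometric series and using commutativity of $V_1, \ldots, V_d$ to collect terms by multi-index, one obtains
\[ (I - Vz^*)^{-1} v = \sum_{\n \in \N^d} \tbinom{|\n|}{\n} \bar z^{\n} V^\n v. \]
The condition $h \perp \ra{V - z}^\perp$ for every $z \in \B^d$ then forces the power series $\sum_\n \tbinom{|\n|}{\n} \bar z^{\n}\langle h, V^\n v\rangle$ to vanish on $\B^d$, so $\langle h, V^\n v\rangle = 0$ for all $\n \in \N^d$ and $v \in \ran{V}^\perp$. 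For arbitrary $Z \in \Om_n$ the same commutative expansion, now tensored against the generally non-commuting $Z^*$, gives
\[ (VZ^*)^k = \sum_{|\n| = k} V^\n \otimes S_\n(Z^*), \]
where $S_\n(Z^*) \in \scr{L}(\C^n)$ is the sum over orderings of $k$-fold products of components of $Z^*$ with exponent vector $\n$. Therefore, for any $\xi, \eta \in \C^n$ and $v \in \ran{V}^\perp$,
\[ \langle h \otimes \xi, (I - VZ^*)^{-1}(v \otimes \eta) \rangle = \sum_{\n}\langle h, V^\n v\rangle \langle \xi, S_\n(Z^*)\eta\rangle = 0, \]
so $h \otimes \xi \in \ra{V - Z}$ for every $\xi \in \C^n$, i.e., $h \in \H'_K$.

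With $\H' = \H'_K$ in hand, the first assertion of Theorem \ref{CKthm} is immediate from Theorem \ref{Kthm}. For the CNC characterization, $V$ is CNC iff $\H' = \{0\}$ iff $(\H')^\perp = \H$. The equalities $(\H')^\perp = \bigvee_z (I - Vz^*)^{-1}\ran{V}^\perp = \bigvee_{\n \in \N^d} V^\n \ran{V}^\perp$ follow from the geometric series expansion together with the Taylor coefficient principle applied to the (anti-)analytic map $z \mapsto (I - Vz^*)^{-1}v$. Finally, commutativity gives $\bigvee_{\alpha \in \F^d} V^\alpha \ran{V}^\perp = \bigvee_{\n \in \N^d} V^\n \ran{V}^\perp$, matching the third characterization supplied by Theorem \ref{Kthm}. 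The principal obstacle is the collapse $\H'_K = \H'$: commutativity of $V$ is used precisely to decouple the $V$-part from the generally non-commuting matrix $Z$ in the expansion of $(VZ^*)^k$, so that the scalar orthogonality conditions $\langle h, V^\n v\rangle = 0$ are enough to annihilate all tensor-valued test vectors.
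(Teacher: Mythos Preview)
Your proposal is correct and follows the same route the paper indicates: deduce Theorem~\ref{CKthm} from Theorem~\ref{Kthm} by exploiting commutativity. The paper simply declares this an ``immediate consequence'' and points to the ingredients already assembled (Lemma~\ref{TextVlemma} for the description of $\ra{V-Z}^\perp$, Lemma~\ref{JurLemma} for $\bigvee_z (I-Vz^*)^{-1} = \bigvee_\n V^\n$, and the trivial collapse $\bigvee_{\alpha\in\F^d} V^\alpha = \bigvee_{\n\in\N^d} V^\n$ under commutativity); applying these to the last displayed identity in Theorem~\ref{Kthm} immediately gives $(\H'_K)^\perp = (\H')^\perp$. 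You instead prove the collapse $\H' = \H'_K$ directly, by expanding $(I-VZ^*)^{-1}$ as a series, grouping the $V$-factors by multi-index, and observing that the scalar orthogonality conditions $\langle h, V^\n v\rangle = 0$ already kill every tensor test vector. This is the same argument unpacked, and your identification of commutativity as precisely what decouples the $V$-part from the non-commuting matrix $Z$ is exactly the point.

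One small notational slip: in the paper's convention $V^\n := \sum_{\lambda(\alpha)=\n} V^\alpha$ already absorbs the multinomial coefficient, so the correct expansion is $(I-Vz^*)^{-1}v = \sum_\n \bar z^\n V^\n v$ without the extra $\binom{|\n|}{\n}$. This does not affect your argument, since you only use that each Taylor coefficient vanishes.
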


A key corollary of these results is:

\begin{cor} \label{CCNCcor}
    A row contraction $T : \mc{H} \otimes \C ^d \rightarrow \mc{H}$ is CNC if and only if
$$ \mc{H} =  \bigvee _{\alpha \in \F ^d} T^\alpha \ran{D_{T^*}}. $$ A $d$-contraction $T : \mc{H} \otimes \C ^d \rightarrow \mc{H}$ is CNC if and only if
$$ \mc{H} = \bigvee _{\n \in \N ^d } T^\n \ran{D_{T^*}} = \bigvee _{z \in \B^d} (I-Tz^* ) ^{-1} \ran{D_{T^*}}. $$
In particular, if a (not necessarily commutative) row contraction $T$ on $\mc{H}$ obeys
$$\mc{H} = \bigvee _{z \in \B^d} (I -Tz^* ) ^{-1} \ran{D_{T^*}}, \quad \quad \mbox{then $T$ is CNC.} $$
\end{cor}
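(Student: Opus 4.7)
The plan is to prove part (i) directly, paralleling the Kre\u{\i}n-style argument of Theorem \ref{Kthm} with the defect range $\ran{D_{T^*}}$ taking the role of the deficiency space $\ran{V}^\perp$; parts (ii) and (iii) then follow from (i) by power-series manipulation of the resolvent $(I - Tz^*)^{-1}$.

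For (i), set $\M := \bigvee_{\alpha \in \F^d} T^\alpha \ran{D_{T^*}}$ and $\H' := \M^\perp$. The key elementary identity is $\|D_{T^*}h\|^2 = \|h\|^2 - \|T^*h\|^2$, so $\ker{D_{T^*}}$ is exactly the set of $h \in \H$ at which $T^*$ acts isometrically. Now $h \in \H'$ if and only if $(T^\alpha)^* h \perp \ran{D_{T^*}}$, equivalently $(T^\alpha)^* h \in \ker{D_{T^*}}$, for every $\alpha \in \F^d$. Taking $\alpha$ to be the empty word gives $h \in \ker{D_{T^*}}$, so $T^*$ is isometric on $\H'$. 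The monoid identity $(T^\alpha)^* T_k^* = (T^{k\alpha})^*$ (with $k\alpha$ the concatenation) immediately yields $T_k^* h \in \H'$ for every $k$, so $\H'$ is co-invariant for $T$. Conversely, any co-invariant subspace on which $T^*$ acts isometrically is contained in $\H'$, since iterating the isometric-action hypothesis along any word $\alpha$ forces $(T^\alpha)^* h \in \ker{D_{T^*}}$. Thus $\H'$ is the maximal co-invariant subspace on which $T^*$ acts isometrically, and $T$ is CNC if and only if $\H' = \{0\}$, i.e., $\M = \H$.

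For (ii), when the $T_k$ mutually commute, $T^\alpha$ depends only on the multi-index $\n \in \N^d$ counting occurrences of each generator in $\alpha$, so the free-monoid span collapses to $\bigvee_{\n \in \N^d} T^\n \ran{D_{T^*}}$. The equivalence with $\bigvee_{z \in \B^d}(I - Tz^*)^{-1} \ran{D_{T^*}}$ comes from the convergent multinomial expansion $(I - Tz^*)^{-1} = \sum_{\n} \binom{|\n|}{\n} z^{*\n} T^\n$: one inclusion is immediate, and the reverse is obtained by extracting individual monomials (e.g., by differentiating in $z$ at the origin, or by varying $z$ along rays). For (iii), even for non-commuting $T$ one has the convergent expansion $(I - Tz^*)^{-1} = \sum_{\alpha \in \F^d} z^{*\alpha} T^\alpha$ on $\B^d$, giving the inclusion $\bigvee_z (I - Tz^*)^{-1} \ran{D_{T^*}} \subseteq \bigvee_\alpha T^\alpha \ran{D_{T^*}}$; so if the left-hand span fills $\H$, so does the right-hand span, and (i) yields CNC.

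The only subtle point is the double characterization of $\H'$ (as $\M^\perp$ and as the maximal co-invariant subspace on which $T^*$ acts isometrically); this reduces entirely to the identity $\|D_{T^*}h\|^2 = \|h\|^2 - \|T^*h\|^2$ together with the semigroup identity $(T^\alpha)^* T_k^* = (T^{k\alpha})^*$, both of which are routine, so I do not expect any genuine technical obstacle.
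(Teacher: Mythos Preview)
Your argument is correct, and in fact it is considerably more direct than the paper's. The paper does not argue directly with $T$; instead it first passes to the isometric--pure decomposition $T=V-C$ of Lemma~\ref{RCdecomp}, observes that $T$ is CNC if and only if $V$ is, and then invokes the full machinery built up for row partial isometries (Theorem~\ref{Kthm}, together with Lemmas~\ref{TextVlemma} and~\ref{NCfact}) to characterize CNC in terms of the noncommutative resolvents $(I-VZ^*)^{-1}$ over $Z\in\Om_n$. Only then does it translate back to $T$ via $\ran{V}^\perp=\ov{\ran{D_{T^*}}}$. Your proof bypasses all of this: the single identity $\|D_{T^*}h\|^2=\|h\|^2-\|T^*h\|^2$ immediately identifies $\M^\perp$ as the maximal co-invariant subspace on which $T^*$ is isometric, and the rest is bookkeeping. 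This is both shorter and more transparent for the corollary as stated. What the paper's route buys is the finer structural statement of Theorem~\ref{Kthm} itself (the description of the maximal co-isometric subspace via the intersection of restricted range spaces $\ra{V-Z}$ over the NC ball), which is of independent interest and motivates the later NC model-map discussion in Section~\ref{Outlook}; your argument does not recover that. For part (ii), the paper simply cites Lemma~\ref{JurLemma} for the equality $\bigvee_z(I-Tz^*)^{-1}=\bigvee_\n T^\n$, whereas you sketch the power-series argument directly; both are fine, though your ``extract monomials by differentiation'' should be stated as acting on $\ip{(I-Tz^*)^{-1}g}{h}$ to keep everything scalar-valued.
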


Motivated by the above corollary, we define:
\begin{defn}
    A row contraction $T : \mc{H} \otimes \C ^d \rightarrow \mc{H}$ obeys the \emph{commutative CNC condition}, and we write: $T$ is CCNC,
if $$\mc{H} = \bigvee _{z \in \B^d} (I -Tz^* ) ^{-1} \ran{D_{T^*}}.$$ Corollary \ref{CCNCcor} implies that any CCNC row contraction is CNC, and that any CNC $d-$contraction is CCNC.
\end{defn}

Note, in particular, that if $T$ is commutative, \emph{i.e.} $T$ is a $d$-contraction,
that $$ \bigvee _{\alpha \in \F ^d} T^\alpha \ran{D_{T^*}} = \bigvee _{\n \in \N ^d} T^\n \ran{D_{T^*}},$$ and this second set can be re-expressed as follows:
\begin{lemma}{ \cite[Lemma 2.2]{Jur2014AC}} \label{JurLemma}
    Given any row contraction $T$ on $\mc{H}$,
$$ \bigvee _{z \in \B ^d}  (I - Tz^* ) ^{-1} = \bigvee _{\n \in \N ^d} T^\n. $$
\end{lemma}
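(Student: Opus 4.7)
The plan is to combine the operator-norm Neumann series expansion of $(I-Tz^*)^{-1}$ for $z \in \B^d$ with two approximation arguments to establish both inclusions. For any $z \in \B^d$ the estimate $\|Tz^*\| \le \|T\|\,\|z\| < 1$ yields the norm-convergent expansion
\[ (I-Tz^*)^{-1} = \sum_{k=0}^\infty (Tz^*)^k = \sum_{k=0}^\infty \Bigl(\sum_{j=1}^d \bar{z}_j T_j\Bigr)^{\!k}, \]
and since the $\bar{z}_j$ are scalars, the multinomial theorem together with the definition $T^{\n} := \sum_{\alpha : \la(\alpha) = \n} T^{\alpha}$ regroups this as
\[ (I-Tz^*)^{-1} = \sum_{\n \in \N^d} \bar{z}^{\n}\, T^{\n}, \]
convergent in operator norm. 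The forward inclusion $\bigvee_{z}(I-Tz^*)^{-1} \subseteq \bigvee_{\n} T^{\n}$ is then immediate, since each $(I-Tz^*)^{-1}$ is a norm limit of finite linear combinations of the $T^{\n}$.

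For the reverse inclusion it suffices to show that every symmetrized power $T^{\n_0}$ lies in $\bigvee_{z \in \B^d}(I-Tz^*)^{-1}$, and I would do this in two stages. First I would isolate the homogeneous degree-$k$ component for $k=|\n_0|$: fix $z \in \B^d \setminus \{0\}$ and consider the operator-valued function $G_z(\mu) := (I-\mu Tz^*)^{-1}$. The identity $\mu T z^* = T(\bar{\mu}z)^*$ shows that
\[ G_z(\mu) = (I - T(\bar{\mu} z)^*)^{-1} \in \bigvee_{z \in \B^d}(I-Tz^*)^{-1} \]
whenever $\bar{\mu} z \in \B^d$, i.e., whenever $|\mu| < 1/\|z\|$. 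Since $G_z$ is operator-norm analytic on $|\mu| < 1/\|Tz^*\|$ with Taylor expansion $\sum_k \mu^k (Tz^*)^k$, the Cauchy integral
\[ (Tz^*)^k = \frac{1}{2\pi i} \oint_{|\mu|=r} G_z(\mu)\, \mu^{-k-1}\, d\mu \qquad (0 < r < 1/\|z\|) \]
exhibits $(Tz^*)^k$ as a norm limit of Riemann sums, each a finite linear combination of operators $G_z(\mu_j) \in \bigvee_{z}(I-Tz^*)^{-1}$. Closedness of the span then places $(Tz^*)^k$ into it (and the case $z=0$ is trivial).

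In the second stage I would pick off an individual $T^{\n_0}$ from the polynomial identities $(Tz^*)^k = \sum_{|\n|=k} \bar{z}^{\n}\, T^{\n}$ by finite-dimensional interpolation. Since the finitely many conjugate monomials $\{\bar{z}^{\n}\}_{|\n|=k}$ are linearly independent as functions on $\B^d$, there exist points $w_1, \ldots, w_N \in \B^d$ and scalars $c_1, \ldots, c_N$ with $\sum_j c_j \bar{w}_j^{\n} = \delta_{\n,\n_0}$ for every $|\n|=k$, so that $\sum_j c_j (Tw_j^*)^k = T^{\n_0}$ lies in the span. The main technical obstacle is the first stage, the isolation of the degree-$k$ homogeneous part of the Neumann series, which cannot be achieved by any single finite linear combination and must instead be obtained by norm-approximating a contour integral; the second, Vandermonde-type step is then routine.
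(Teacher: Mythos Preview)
The paper does not actually prove this lemma; it simply quotes it from \cite[Lemma 2.2]{Jur2014AC}, so there is no in-paper argument to compare against. Your argument is correct and essentially the standard one: the Neumann expansion $(I-Tz^*)^{-1}=\sum_{\n}\bar z^{\,\n}T^{\n}$ gives the inclusion $\bigvee_z(I-Tz^*)^{-1}\subseteq\bigvee_{\n}T^{\n}$, and for the reverse you first extract the homogeneous component $(Tz^*)^k$ via a Cauchy integral in the auxiliary scalar variable $\mu$ (using that $(I-\mu Tz^*)^{-1}=(I-T(\bar\mu z)^*)^{-1}$ is already in the family), and then separate the individual $T^{\n_0}$ by a finite Vandermonde-type interpolation exploiting the linear independence of the monomials $\{\bar z^{\,\n}\}_{|\n|=k}$ on $\B^d$. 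Both stages are sound; the only cosmetic point is that one could replace the Cauchy integral by an explicit Riemann-sum/averaging over roots of unity, but this is the same idea.
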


The next example below shows that not every CNC row contraction is CCNC:
\begin{eg}{ (The Arveson $d$-shift and the left free shift)}
The Arveson $d$-shift, $S$, on the Drury-Arveson space, $H^2 _d$, is a commutative row partial isometry. In particular $D_{S^*} = k_0 k_0 ^*$ is a projection
so that $$ \bigvee _{z \in \B ^d} (I-Sz^* ) ^{-1} \ran{D_{S^*}} = \bigvee _{z \in \B ^d} k_z = H^2 _d, $$ and $S$ is CCNC. Observe that $S$ is an extremal Gleason solution for the de Branges-Rovnyak space $\scr{H} (b) = H^2 _d$ with $b \equiv 0 \in \scr{S} _d = \scr{S} _d (\C )$.

For an example of a CNC row contraction which is not CCNC, consider the left free shift $L$ on the full Fock space $F^2 _d$. Recall here that the full Fock space over $\C ^d$, $F^2 _d$, is the direct sum of all tensor powers of $\C ^d$:
\ba F^2 _d & := & \C \oplus \C ^d \oplus  \left( \C ^d \otimes \C ^d \right) \oplus \left( \C ^d \otimes \C ^d \otimes \C ^d \right) \oplus \cdots \nn \\
& =& \bigoplus _{k=0} ^\infty \left( \C ^d \right) ^{k \cdot \otimes }. \nn \ea Fix an orthonormal basis $e_1, ... , e_d$ of $\C ^d$. The left creation
operators $L_1, ..., L_d$ are the operators which act as tensoring on the left by these basis vectors:
$$ L_k f := e_k \otimes f; \quad \quad f \in F^2 _d. $$  The left  free shift is the row operator $L := (L_1 , ... , L_d): F^2 _d \otimes \C ^d \rightarrow F^2 _d$. The left free shift, is, in fact, a row isometry: $L ^* L = I_{F^2} \otimes I_d$.  The orthogonal complement of the range of $L$ is the vacuum vector $1$ which spans the the subspace $\C =: (\C^d) ^{0\cdot \otimes} \subset F^2 _d$. A canonical orthonormal basis for $F^2 _d$ is then $\{ e_\alpha \} _{\alpha \in \F ^d}$ where $e_\alpha = L^\alpha 1$. It follows that $L$ is CNC:
$$ \bigvee _{\alpha \in F^2 _d} L^\alpha \ran{D_{L^*}} = \bigvee L^\alpha 1 = F^2 _d. $$ However, $L$ is not CCNC, since
$$ \bigvee _{\n \in \N ^d } L^\n \ran{D_{L^*}} = \bigvee L^\n 1, $$ is equal to symmetric or Bosonic Fock space, a proper subspace of $F^2 _d$ (which is in fact canonically isomorphic to $H^2 _d$, see \emph{e.g.} \cite[Section 4.5]{Sha2013}).
\end{eg}

It will be convenient to first establish several preliminary facts before proving Theorem \ref{Kthm} and its corollaries. Given a partial isometry $V : \mc{H} _1 \rightarrow \mc{H} _2$, and a contraction $T : \K _1 \rightarrow \K _2$ where $\mc{H} _k \subset \K _k$, we say
that $T$ extends $V$ and write $V \subseteq T$ if $T V^*V = V$.

\begin{lemma}{ (\cite[Lemma 2.3]{JM})} \label{contractext}
Let $V : \mc{H} _1 \rightarrow \mc{H} _2$ be a partial isometry and assume that $\mc{H} _k \subseteq \J _k$ for Hilbert spaces $\mc{H} _k , \J _k$, $k=1,2$. For any contraction $T : \J _1 \rightarrow \J _2$ the following are equivalent:
\bi
    \item[(i)] $T$ is a contractive extension of $V$, $V \subseteq T$.
    \item[(ii)] $T^*$ is a contractive extension of $V^*$, $V^* \subseteq T^*$.
    \item[(iii)] There is a contraction $C : \J _1 \rightarrow \J _2$ such that $\ker{C} ^\perp \subseteq \J _1 \ominus \ker{V} ^\perp, \ \ran{C} \subseteq \J _2 \ominus \ran{V} $ and $T = V-C$.
\ei
\end{lemma}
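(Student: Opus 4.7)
The strategy is to establish (i) $\Leftrightarrow$ (ii) by a direct computation in one direction and symmetry in the other, then prove (i) $\Leftrightarrow$ (iii) by introducing the defect operator $C := V - T$.

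For (i) $\Rightarrow$ (ii), the identity $TV^*V = V$ says that $T$ agrees with $V$ on $\ker{V} ^\perp$. Since $V$ is isometric there and $T$ is a contraction, equality in the contraction inequality forces $T^*T = I$ on $\ker{V} ^\perp$, i.e. $(T^*T)(V^*V) = V^*V$. Therefore
\[ T^*V \ = \ T^*(TV^*V) \ = \ (T^*T)(V^*V) \ = \ V^*V, \]
and multiplying on the right by $V^*$ gives $T^*VV^* = V^*VV^* = V^*$, which is (ii). The converse follows by symmetry: since $T^*$ is a contraction and $V^*$ is a partial isometry, the same argument applied to the pair $(T^*, V^*)$ shows that (ii) --- which is literally (i) for $(T^*, V^*)$ --- implies (i) for $(T, V)$.

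For (i) $\Rightarrow$ (iii), set $C := V - T$. The hypothesis $TV^*V = V$ gives $CV^*V = 0$, hence $\ker{V} ^\perp \subseteq \ker{C}$ and $\ker{C} ^\perp \subseteq \J_1 \ominus \ker{V} ^\perp$. From (ii) one has $V^*T = (T^*V)^* = (V^*V)^* = V^*V$, so $V^*C = V^*V - V^*T = 0$ and $\ran{C} \subseteq \J_2 \ominus \ran{V}$. The vanishing of the two cross terms then yields the key identity
\[ T^*T \ = \ (V - C)^*(V - C) \ = \ V^*V + C^*C, \]
so that $T^*T \leq I$ forces $C^*C \leq I - V^*V \leq I$, and $C$ is a contraction. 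Conversely, given a contraction $C$ satisfying the subspace conditions of (iii), the condition $\ker{C} ^\perp \subseteq \J_1 \ominus \ker{V} ^\perp$ gives $CV^*V = 0$, so $TV^*V = VV^*V - 0 = V$; and the same expansion yields $T^*T = V^*V + C^*C$, while the subspace conditions together with $\| C \| \leq 1$ imply $C^*C \leq I - V^*V$, hence $T^*T \leq I$.

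The main (mild) obstacle is the bookkeeping of subspace inclusions in the enlarged ambient spaces $\J_k \supseteq \H_k$: one has to extend $V$ by zero on $\J_1 \ominus \H_1$ in order for the products $V^*V$, $VV^*$, $V^*T$, $T^*V$ to be well-defined operators on the ambient Hilbert spaces. With this convention, $V^*V$ is the orthogonal projection of $\J_1$ onto $\ker{V} ^\perp \subseteq \H_1 \subseteq \J_1$, and each step reduces to a short algebraic manipulation using that $V$ is isometric on $\ker{V} ^\perp$ and vanishes on its complement in $\J_1$.
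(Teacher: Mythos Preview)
Your proof is correct. The paper does not give its own proof of this lemma; it is simply quoted from \cite[Lemma 2.3]{JM}, so there is no argument in the present paper to compare against. Your route --- using the equality case in the contraction inequality to get $(T^*T)(V^*V)=V^*V$, deducing $T^*V=V^*V$, and then expanding $(V-C)^*(V-C)$ once the cross terms vanish --- is the standard one and matches what one finds in the cited reference. The only step worth making fully explicit is the inequality $C^*C \leq I-V^*V$ in the direction (iii)$\Rightarrow$(i): since $\ker{C}^\perp \subseteq \J_1 \ominus \ker{V}^\perp$, the operator $C^*C$ is compressed by the projection $I-V^*V$, i.e.\ $C^*C=(I-V^*V)C^*C(I-V^*V)$, whence $C^*C\leq \|C\|^2(I-V^*V)\leq I-V^*V$.
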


\begin{lemma} \label{TextVlemma}
    Let $V$ be a row partial isometry on $\mc{H} \otimes \C ^d$ and let $T \supseteq V$ be a row contractive extension to $\K \otimes \C ^d$, $\K \supseteq \mc{H}$. Then for any $Z \in \B ^d _n$,
    $$ (I -TZ^* ) ^{-1} : \K \otimes \C ^n \ominus \left( \ran{V} \otimes \C ^n \right) \rightarrow \K \otimes \C ^n \ominus \ra{V-Z}, \quad \quad \mbox{is an isomorphism.} $$
\end{lemma}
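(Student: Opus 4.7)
The plan is to reduce the claim to an algebraic identity via the decomposition $T = V - C$ furnished by Lemma \ref{contractext}(iii). Since $Z \in \Om_n$ is a strict row contraction and $T$ is a row contraction, $\|TZ^*\| < 1$, so $(I - TZ^*)$ is a bounded invertible operator on all of $\K \otimes \C^n$. It therefore suffices to show that $(I - TZ^*)^{-1}$ bijects $\K \otimes \C^n \ominus (\ran V \otimes \C^n)$ onto $\K \otimes \C^n \ominus \ra{V-Z}$; once set-theoretic bijection is established, bicontinuity is automatic because both the map and its inverse are restrictions of bounded operators on $\K \otimes \C^n$.

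First, I would unpack the identity $\ra{V-Z} = (I - ZV^*)(\ran V \otimes \C^n)$ and take the adjoint of $(I - ZV^*)$ to obtain, with $V$ extended by zero from $\H$ to $\K$ and $VZ^* := \sum_{j=1}^d V_j \otimes Z_j^*$ acting on $\K \otimes \C^n$, the equivalence
$$ f \in \ra{V-Z}^\perp \quad \Longleftrightarrow \quad (I - VZ^*) f \perp \ran V \otimes \C^n, $$
where both perps are taken in $\K \otimes \C^n$.

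Next, I would invoke Lemma \ref{contractext}(iii) to obtain a contraction $C : \K \otimes \C^d \to \K$ with $\ran C \perp \ran V$ and $T = V - C$. Because each component $C_j : \K \to \K$ has range in $(\ran V)^\perp$, the operator $CZ^* := \sum_{j=1}^d C_j \otimes Z_j^*$ has range contained in $(\ran V)^\perp \otimes \C^n = (\ran V \otimes \C^n)^\perp$. Writing
$$ (I - VZ^*) f = (I - TZ^*) f - CZ^* f, $$
and combining with this observation, the $CZ^* f$ contribution is invisible when testing against $\ran V \otimes \C^n$, producing the key equivalence
$$ f \in \ra{V-Z}^\perp \quad \Longleftrightarrow \quad (I - TZ^*) f \in (\ran V \otimes \C^n)^\perp. $$
Since $(I - TZ^*)$ is a topological isomorphism of $\K \otimes \C^n$, this says precisely that it restricts to a bijection between $\ra{V-Z}^\perp$ and $(\ran V \otimes \C^n)^\perp$; inverting yields the lemma.

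The main technical nuisance is bookkeeping: correctly extending $V$ by zero from $\H \otimes \C^d$ to $\K \otimes \C^d$, interpreting $VZ^*$, $TZ^*$, and $CZ^*$ as operators on $\K \otimes \C^n$, and verifying that every orthogonal complement is computed inside the common ambient space $\K \otimes \C^n$. The conceptual content of the argument, however, is just the one-line observation that the range of $CZ^*$ automatically lies in $(\ran V \otimes \C^n)^\perp$, so that $(I - VZ^*)$ and $(I - TZ^*)$ differ only by an operator invisible to testing against $\ran V \otimes \C^n$.
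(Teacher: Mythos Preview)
Your proof is correct and follows essentially the same route as the paper: both arguments reduce to the fact that $T$ and $V$ agree modulo $(\ran V)^\perp$, which is exactly Lemma~\ref{contractext} (you invoke part~(iii) with the decomposition $T=V-C$, while the paper implicitly uses part~(ii) via $ZT^*(V\otimes I_n)=ZV^*(V\otimes I_n)$). Your packaging as a single biconditional is slightly cleaner than the paper's two separate inclusion computations, but the mathematical content is the same.
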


Setting $T =V$, this shows $\ra{V-Z} ^\perp = (I -VZ^* ) ^{-1} \ran{V} ^\perp$. This holds, in particular, for $Z = z \in \B ^d _1 = \B ^d$ and $T \supseteq V$ acting on $\mc{H} \otimes \C ^d$ (\emph{e.g.} $T=V$):
$$ (I-Tz^*) ^{-1} : \ran{V} ^\perp \rightarrow \ra{V-z} ^\perp \quad \quad \mbox{is an isomorphism.}$$

\begin{proof}
    Since $Z$ is strictly contractive, $TZ^*$, as defined above in equation (\ref{ZXstar}), is a strict contraction so that $(I -TZ^* ) ^{-1}$ is well-defined.
Assume that $f \in \K \otimes \C ^n \ominus \ra{V-Z}$, and suppose $g \in \ran{V} \otimes \C ^n$ so that $g = (V \otimes I_n )G $ for some
$G \in (\mc{H} \otimes \C ^n) \otimes \C ^d \ominus \ker{V \otimes I_n }.$ Then,
\ba \ip{ (I -TZ^*) f }{ g} & = & \ip{f}{(I -ZT^* ) (V\otimes I_n) G} \nn \\
& = & \ip{f}{ (I -ZV^* ) (V\otimes I_n )G} = 0. \nn \ea This proves that
$$ (I -TZ^* ) \left( (\K \otimes \C ^n) \ominus \ra{V-Z} \right) \subseteq (\K \otimes \C ^n) \ominus (\ran{V} \otimes \C ^n), $$ so that
$$ (\K \otimes \C ^n) \ominus \ra{V-Z} \subseteq (I -TZ^* ) ^{-1} \left( \K \otimes \C ^n \ominus\ran{V} \otimes \C ^n \right). $$
For the reverse inclusion suppose that $f \in \K \otimes \C ^n \ominus \ran{V \otimes I_n}$. Also
assume that $g \in \ra{V-Z}$, $g = (I - ZV^* ) (V \otimes I_n) G$. Then,
\begin{align*} \ip{ (I -TZ^* ) ^{-1} f}{g} & =  \ip{f}{ (I -ZT^* ) ^{-1} (I -ZV^*) VG }  && (VG := V\otimes I_n G) \\
& =  \ip{f}{ (I -ZT^* ) ^{-1} (I -ZT^*) VG } && \\
& =  \ip{f}{V G} = 0. && \end{align*}
\end{proof}

\begin{lemma} \label{Lcontain}
    Suppose that $\scr{L} \subseteq \mc{H}$ is co-invariant for $V$, and that $V^* | _{\scr{L} }$ is an isometry. Then for any $Z \in \B ^d _n$,
$\scr{L} \otimes \C ^n \subseteq \ra{V-Z}. $
\end{lemma}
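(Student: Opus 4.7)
The plan is to produce, for each $h \in \scr{L}$ and $v \in \C^n$, an explicit preimage in $\ran{X} \otimes \C^n$ under $I - ZX^*$, since by definition $\ra{X-Z} = (I - ZX^*)(\ran{X} \otimes \C^n)$. Two easy observations get the argument started. First, the hypothesis that $X^* |_{\scr{L}}$ is an isometry means that $XX^* |_{\scr{L}} = I_{\scr{L}}$, so every $h \in \scr{L}$ already lies in $\ran{X}$; in particular $\scr{L} \otimes \C^n \subseteq \ran{X} \otimes \C^n$. Second, co-invariance $X^* \scr{L} \subseteq \scr{L} \otimes \C^d$ is equivalent to the componentwise statement $X_k^* \scr{L} \subseteq \scr{L}$ for each $k$, and therefore $ZX^* = \sum_k X_k^* \otimes Z_k$ carries the subspace $\scr{L} \otimes \C^n$ into itself.

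Next I would use that $Z \in \Om_n$ is a strict contraction and $X^*$ has norm at most one, so (as in the proof of Lemma \ref{TextVlemma}) $ZX^*$ is a strict contraction on $\H \otimes \C^n$ and $I - ZX^*$ is boundedly invertible via the Neumann series $\sum_{k \geq 0} (ZX^*)^k$. Since $ZX^*$ preserves the closed subspace $\scr{L} \otimes \C^n$, every partial sum of $\sum_{k \geq 0} (ZX^*)^k (h \otimes v)$ lies in $\scr{L} \otimes \C^n$, and hence so does the limit $g := (I - ZX^*)^{-1}(h \otimes v) \in \scr{L} \otimes \C^n \subseteq \ran{X} \otimes \C^n$.

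Finally, the identity $h \otimes v = (I - ZX^*) g$ exhibits $h \otimes v$ as an element of $(I - ZX^*)(\ran{X} \otimes \C^n) = \ra{X-Z}$. Varying $h \in \scr{L}$ and $v \in \C^n$ (and taking closed linear spans) yields $\scr{L} \otimes \C^n \subseteq \ra{X-Z}$, as required. There is no serious obstacle; the only point requiring attention is that the Neumann series stays inside $\scr{L} \otimes \C^n$, which reduces to the componentwise co-invariance $X_k^* \scr{L} \subseteq \scr{L}$ noted at the outset.
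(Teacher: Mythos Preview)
Your proof is correct and follows essentially the same route as the paper's: both arguments rest on the two observations that $\scr{L} \subseteq \ran{X}$ (from $XX^* | _{\scr{L}} = I$) and that $(I - ZX^*)$ restricts to a bijection of $\scr{L} \otimes \C^n$ onto itself (from componentwise co-invariance $X_k^* \scr{L} \subseteq \scr{L}$ together with strict contractivity of $ZX^*$). The paper compresses the second step into the one-line remark ``$(I - ZX^*)$ is invertible, hence onto,'' whereas you spell it out via the Neumann series staying inside the closed invariant subspace; your version is a bit more explicit but not genuinely different.
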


\begin{proof}
\begin{align*} \ra{V -Z} & \supseteq  (I -ZV^* ) V \otimes I_n (\scr{L} \otimes \C ^n \otimes \C ^d )  \\
& = (I -ZV^*) \scr{L} \otimes \C ^n; && \mbox{$V$ is co-isometric, hence onto $\scr{L}$}  \\
& =  \scr{L} \otimes \C ^n; && (I -ZV^* ) \ \mbox{is invertible, hence onto.} \end{align*}

\end{proof}

\begin{defn}
    For any $n \in \N$ define $\mc{H} ' _n \subset \mc{H} \otimes \C ^n$ by
$$ \mc{H} ' _n := \bigcap _{Z \in \B ^d _n} \ra{V-Z}. $$
Also define $\mc{H} ' \subseteq \mc{H}$ by
 \ba \mc{H} ' & := & \{ h \in \mc{H} | \ h \otimes \C ^n \subseteq \mc{H} ' _n, \ \forall n \in \N \} \nn \\
 & = & \{ h \in \mc{H} | \ \la ^* h \in \mc{H} ' _n \ \forall \la \in \C ^n \ \forall n \in \N \}. \nn \ea
\end{defn}

In the above, as before, given any $\la \in \C ^n$, we view $\la : \mc{H} \otimes \C ^n \rightarrow \mc{H}$ as a row operator.

\begin{lemma} \label{ortho}
    $\mc{H} ' $ is a closed subspace, and
\ba (\mc{H}') ^\perp & = & \bigvee _{\substack{\la , \tau \in \C ^n \\ Z \in \B ^d _n ; \ n \in \N}} \la (I -VZ^* ) ^{-1}  \tau ^* \ran{V} ^\perp \nn \\
& = & \bigvee _{\substack{\la  \in \C ^n \\ Z \in \B ^d _n ; \ n \in \N}} \la (I -VZ^* ) ^{-1}  \ran{V} ^\perp \otimes \C ^n. \nn \ea
\end{lemma}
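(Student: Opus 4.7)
The plan is to turn the nested definition of $\H'$ into a single orthogonality condition by applying Lemma \ref{TextVlemma} with $T = V = X$ (a row partial isometry trivially extends itself). That lemma, applied with $\K = \H$, gives for every $n \in \N$ and $Z \in \Om_n$
$$ \ra{X-Z}^\perp = (I - XZ^*)^{-1}\left( \ran{X}^\perp \otimes \C^n \right). $$
Since $\|ZX^*\| \leq \|Z\| < 1$, the operator $I - ZX^*$ is boundedly invertible on $\H \otimes \C^n$, so $\ra{X-Z} = (I - ZX^*)(\ran{X} \otimes \C^n)$ is closed, and hence so is the intersection $\H'_n = \bigcap_{Z \in \Om_n} \ra{X-Z}$. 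Passing to orthogonal complements yields
$$ (\H'_n)^\perp = \bigvee_{Z \in \Om_n} (I - XZ^*)^{-1}\left( \ran{X}^\perp \otimes \C^n \right). $$

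Next, I would use the reformulation $\H' = \{ h \in \H : \la^* h \in \H'_n, \ \forall\, \la \in \C^n, \ n \in \N \}$ given in the definition. Because each $\H'_n$ is closed, the condition $h \in \H'$ is equivalent to
$$ \ip{\la^* h}{(I - XZ^*)^{-1} g} = 0 \qquad \forall\, n \in \N,\ \la \in \C^n,\ Z \in \Om_n,\ g \in \ran{X}^\perp \otimes \C^n. $$
Moving $\la^*$ across the inner product rewrites this as $\ip{h}{\la(I - XZ^*)^{-1} g} = 0$ for the same family of data. Thus $\H'$ is the orthogonal complement of the union of the sets $\la(I - XZ^*)^{-1}(\ran{X}^\perp \otimes \C^n)$, which simultaneously shows that $\H'$ is a closed subspace and produces the second of the two claimed forms of $(\H')^\perp$.

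Finally, to obtain the first form, I would note that $\ran{X}^\perp \otimes \C^n$ is the closed linear span of the rank-one tensors $\tau^* y$ with $\tau \in \C^n$ and $y \in \ran{X}^\perp$, so replacing $g$ by $\tau^* y$ inside the outer $\bigvee$ preserves the closed span. I do not expect significant obstacles in this proof; the main care is bookkeeping with the row/column conventions --- $\la \in \C^n$ viewed as a row $\H \otimes \C^n \to \H$ with column adjoint $\la^*$, and $XZ^* \in \scr{L}(\H \otimes \C^n)$ as defined in (\ref{ZXstar}) --- after which the argument is a direct unpacking of Lemma \ref{TextVlemma} together with a standard orthogonal-complement manipulation.
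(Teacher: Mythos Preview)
Your proposal is correct and follows essentially the same route as the paper: both invoke Lemma \ref{TextVlemma} (with $T=X$) to identify $\ra{X-Z}^\perp$ with $(I-XZ^*)^{-1}(\ran{X}^\perp\otimes\C^n)$, then unwind the condition $\la^*h\in\H'_n$ into the stated orthogonality. The only cosmetic difference is that the paper first proves closedness by a direct Cauchy-sequence argument, whereas you extract it as a byproduct of exhibiting $\H'$ as an orthogonal complement; both are fine.
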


\begin{proof}
    Let $h_k$ be Cauchy in $\mc{H} '$ with limit $h \in \mc{H}$. Then for any fixed $n \in \N$ and $\tau \in \C ^n$, $\tau ^* h_k $ is Cauchy in $\mc{H}' \otimes \C ^n$, and hence the limit $\tau ^*h $ belongs to $\mc{H} ' _n$, since $\mc{H} ' _n$ is clearly closed. Since this holds for any $n$, we obtain that $h \in \mc{H} '$.

For the proof of the second statement suppose that $h \in \mc{H}$ is orthogonal  to any vector of the form
$$ \la (I -VZ^* ) ^{-1} \tau ^* \ran{V} ^\perp: \quad \quad \la , \tau \in \C ^n, \ Z \in \B ^d _n. $$
This happens if and only if:
\begin{align*} h & \perp \la (I -VZ^* ) ^{-1} \ran{V} ^\perp \otimes \C ^n ; && \la \in \C ^n, \ Z \in \B ^d _n, \ n\in \N \\
\Leftrightarrow \ h \otimes \C ^n & \perp   \underbrace{(I-VZ^*) ^{-1} \ran{V} ^\perp \otimes \C ^n}_{=\ra{V-Z} ^\perp \ \mbox{by Lemma \ref{TextVlemma}}}; && Z \in \B ^d _n, \ n \in \N. \end{align*}  This proves that $h$ has this property if and only if $h \otimes \C ^n \subseteq \ra{V-Z}$ for any $Z \in \B ^d _n$, \emph {i.e.} if and only if
$h \otimes \C ^n \subseteq \mc{H} ' _n$ for any $n \in \N$ and therefore if and only if $h \in \mc{H} '$.
\end{proof}

\begin{lemma}
    If $\scr{L} \subseteq \mc{H}$ is co-invariant for the row partial isometry $V$ and $V^*$ is isometric on $\scr{L}$ then $\scr{L} \subseteq \mc{H} '$.
\end{lemma}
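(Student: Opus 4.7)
The plan is to deduce this lemma as a more or less immediate corollary of Lemma \ref{Lcontain} together with the definitions of $\H'_n$ and $\H'$. Indeed, Lemma \ref{Lcontain} already asserts that for any co-invariant subspace $\scr{L} \subseteq \H$ on which $X^*$ acts isometrically, and for any $Z \in \Om_n$, the inclusion $\scr{L} \otimes \C^n \subseteq \ra{X-Z}$ holds. All that remains is to package this into the definition of $\H'$.

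First, fix $n \in \N$. Since Lemma \ref{Lcontain} holds uniformly in $Z \in \Om_n$, intersecting over all such $Z$ yields
$$ \scr{L} \otimes \C ^n \ \subseteq \ \bigcap _{Z \in \Om _n } \ra{X-Z} \ = \ \H ' _n. $$
Now let $h \in \scr{L}$ be arbitrary. For each $n \in \N$ and any $\la \in \C^n$, the vector $\la ^* h = h \otimes \la$ lies in $\scr{L} \otimes \C^n$, and hence in $\H '_n$ by the inclusion above. Equivalently, $h \otimes \C ^n \subseteq \H ' _n$ for every $n \in \N$. By the definition of $\H '$, this is precisely the statement that $h \in \H '$.

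Since $h \in \scr{L}$ was arbitrary, we conclude $\scr{L} \subseteq \H '$, as desired. The only substantive content of the argument lies in Lemma \ref{Lcontain} itself (which has already been proved using the invertibility of $I - ZX^*$ combined with the fact that $X$ is co-isometric on $\scr{L} \otimes \C ^d$); once that is granted, the present lemma is essentially a matter of unwinding the quantifiers defining $\H '_n$ and $\H '$. No real obstacle is anticipated.
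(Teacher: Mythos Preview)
Your proof is correct and follows essentially the same approach as the paper: invoke Lemma \ref{Lcontain} to obtain $\scr{L} \otimes \C^n \subseteq \H'_n$ for every $n$, and then unwind the definition of $\H'$. The paper's proof is nearly identical, just slightly more terse.
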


\begin{proof}
    By Lemma \ref{Lcontain}, $\scr{L} \otimes \C ^n \subseteq \mc{H} ' _n$ for any $n \in \N$. In particular, given any $l \in \scr{L}$, $\tau ^*l$ belongs to $\mc{H}' _n $ for any $\tau \in \C ^n$ and any $n \in \N$.
Therefore $l \in \mc{H} '$ by definition and $\scr{L} \subseteq \mc{H} '$.
\end{proof}

The following technical fact provides a useful description of $(\mc{H} ' ) ^\perp$.

\begin{lemma} \label{NCfact}
$$     \bigvee _{\substack{\la , \tau \in \C ^n \\ Z \in \B ^d _n; \ n \in \N }}  \la (I -ZV^* ) ^{-1} \tau ^* = \bigvee _{\alpha \in \F ^d } (V^* ) ^\alpha. $$
\end{lemma}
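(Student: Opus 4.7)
The plan is to exploit the norm-convergent geometric series expansion of $(I - ZX^*)^{-1}$ and then establish each inclusion separately, the forward one being immediate from the expansion and the reverse one requiring a combinatorial selection trick.

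For the forward inclusion ($\subseteq$), I would use that $Z \in \Om_n$ is a strict row contraction, so $\|ZX^*\| \le \|Z\| < 1$, and hence in operator norm
\[ (I - ZX^*)^{-1} \;=\; \sum_{k=0}^\infty (ZX^*)^k \;=\; \sum_{\alpha \in \F^d} (X^*)^\alpha \otimes Z^\alpha, \]
where $Z^\alpha := Z_{\alpha_1}\cdots Z_{\alpha_k}$ for $\alpha = \alpha_1\cdots\alpha_k$. The tensor factorization uses that each $X_j^*$ acts on the $\H$-factor while each $Z_j$ acts on the $\C^n$-factor, so they commute. Sandwiching by $\lambda$ on the left and $\tau^*$ on the right contracts out the $\C^n$-factor, yielding
\[ \lambda (I - ZX^*)^{-1} \tau^* \;=\; \sum_{\alpha \in \F^d} (\lambda Z^\alpha \tau^*)\,(X^*)^\alpha, \]
with scalar coefficients $\lambda Z^\alpha \tau^* \in \C$ and norm convergence. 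This places every operator on the LHS inside the closed linear span of $\{(X^*)^\alpha\}_{\alpha \in \F^d}$.

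For the reverse inclusion, I would show that each individual $(X^*)^\alpha$ already arises in the LHS by tailoring $n, Z, \lambda, \tau$ to the word $\alpha$. Given $\alpha = i_1 \cdots i_k \in \F^d$, take $n := k+1$ and define strictly upper-triangular matrices $W_j \in \C^{(k+1)\times (k+1)}$ by
\[ W_j := \sum_{\substack{1 \le m \le k \\ i_m = j}} E_{m, m+1}, \]
where $E_{m, m+1}$ is the standard $(k+1)\times(k+1)$ matrix unit. Then $W_j e_l = e_{l-1}$ whenever $l \ge 2$ and $i_{l-1} = j$, and $W_j e_l = 0$ otherwise. A short induction on word length shows that $W^\beta e_{k+1}$ has nonzero first coordinate if and only if $\beta = \alpha$, in which case it equals $e_1$ (words of length $m < k$ send $e_{k+1}$ into $\C e_{k+1-m}$ or $0$, and words of length $> k$ send $e_{k+1}$ to $0$). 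Setting $\lambda := e_1$, $\tau := e_{k+1}$, and $Z := tW$ for $t \in (0,1)$ small enough that $Z \in \Om_{k+1}$, the series collapses to $\lambda (I - ZX^*)^{-1} \tau^* = t^k (X^*)^\alpha$, and rescaling by $t^{-k}$ exhibits $(X^*)^\alpha$ in the LHS.

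The only genuinely non-routine step is the combinatorial construction of the matrices $W_j$ and the verification that $\beta = \alpha$ is the unique word for which $W^\beta e_{k+1}$ lands in $\C e_1$; once that is in hand, everything else reduces to routine manipulation of a norm-convergent geometric series plus the observation that $tW$ is a strict row contraction for small $t > 0$.
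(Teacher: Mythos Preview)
Your proof is correct, and the reverse inclusion takes a genuinely different route from the paper's. The paper proceeds by induction on word length: the base case uses Lemma~\ref{JurLemma} at level $n=1$ to obtain each $R_j = X_j^*$, and the inductive step shows that for any term $\la (I-ZR)^{-1}\tau^*$ with $Z\in\Om_n$ one can manufacture a block $2n\times 2n$ point $W\in\Om_{2n}$ and vectors $\La,\Ga\in\C^{2n}$ so that $\La(I-WR)^{-1}\Ga^* = rR_j\,\la(I-ZR)^{-1}\tau^*$, thereby prepending one letter at a time. Your argument instead builds, for a given word $\alpha$ of length $k$, a single nilpotent ``word selector'' $W\in\C^{(k+1)\times(k+1)}\otimes\C^d$ whose only nonvanishing $(1,k{+}1)$ matrix coefficient among all $W^\beta$ occurs at $\beta=\alpha$; since $W$ is nilpotent the geometric series is a finite sum and collapses exactly to a scalar multiple of $(X^*)^\alpha$. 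Your construction is more direct and uses matrix size $k+1$ rather than the iterated doubling implicit in the paper's induction; the paper's approach, on the other hand, isolates the structural fact that the left-hand span is closed under left multiplication by each $R_j$, which is of some independent interest.
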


\begin{proof}
    First note that anything in the the left hand side (LHS) of the above equation is a linear combination of products of the $V_1^*, ... , V_d ^*$, and so it follows that the left hand side is contained in the right hand side (RHS).

We will prove the converse inductively on the length, $N = | \alpha |$ of a word $\alpha \in \F ^d$.  First, taking $n=1$, we have by Lemma \ref{JurLemma} that $(V^* ) ^\n $ belongs to the LHS for all $\n \in \N ^d$.
In particular $V_k ^* \in LHS$ for all $1 \leq k \leq d$ so that the inductive hypothesis holds for $N=1$. Assume that the inductive hypothesis holds for all $\alpha \in \F ^d$ of length less than or equal to $K \in \N$.
That is, $| \alpha | \leq K$ implies that $(V^*)^\alpha$ belongs to the LHS. To complete the induction step we need to prove that given any $\beta \in \F ^d$ of length $K+1$ that $(V^*)^\beta$ belongs to the LHS.

Any such $\beta$ can be written $\beta = j \alpha$ where $j \in \{ 1 , ... , d \}$ and $|\alpha | = K$. By the hypothesis $(V^*)^\alpha $ is the norm-limit of finite linear combinations of terms of the form
$$ \la (I -ZV^* ) ^{-1} \tau ^*, $$ where $\la, \tau \in \C ^n$ and $Z \in \B ^d _n$ for some $n \in \N$. It therefore suffices to prove that for any such term, and any fixed $1\leq j \leq d$, we can find $W \in \B ^d _m$ and $\La  , \Ga \in \C ^m$
so that $$ \La (I -WV^*) ^{-1} \Ga ^* = V_j ^* \la (I -ZV^* ) ^{-1} \tau ^*. $$ For simplicity fix $j =1$. The other cases will follow from an analogous argument.

Choose $W = (W_1, ..., W_d ) \in \B ^d _{2n}$ as follows:

$$ W_1  := \bbm Z_1 & 0_n \\ r I_n & 0 _n \ebm; \quad \quad W_k := \bbm Z_k &  0 \\ 0 & 0 \ebm; \quad 2 \leq k \leq d, $$ where if $\| Z \| ^2 = s < 1$, choose $0<r<1$ small enough so that $1 > s (1+r^2)$.

Then,
$$ I - W W^* = \bbm I - Z Z ^* &  -r Z_1 \\ -r Z_1 ^* & I \ebm, $$ and by Schur complement theory \cite[Appendix A.5.5]{Boyd}, this is strictly positive if and only if $ I - ZZ ^* - r^2 Z_1 Z_1 ^* > 0$. By our choice of $r$, this is the case,
and it follows that $W \in \B ^d _{2n}$ is strictly contractive.  Observe that
$$ WV^* = \bbm ZV^* & 0 \\ rI_n \otimes V_1 ^* & 0 \ebm; \quad \quad \mbox{and} \quad (WV^*) ^k = \bbm (ZV^*) ^k & 0 \\ r (I_n \otimes V_1 ^*) (ZV^*) ^{k-1} & 0  \ebm;\quad  k \geq 2. $$ It follows that
$$ (I -WV^*) ^{-1} = \bbm (I -ZV^*) ^{-1} & 0 \\ r(I_n \otimes V_1 ^*) (I -ZV^* ) ^{-1} & I_n \otimes I_\mc{H} \ebm. $$  Taking $\La := (0 _n , \la )$ and $\Ga = ( \tau , 0 _n )$ then yields
$$ \La (I -WV^*) ^{-1} \Ga ^* = rV_1 ^* \la (I -ZV^*) ^{-1} \tau ^*, $$ and the inductive step follows.
\end{proof}
We now have all the necessary ingredients to prove the main result of this section:
\begin{proof}{ (of Theorem \ref{Kthm})}
By Lemma \ref{Lcontain}, any co-invariant subspace, $\scr{L}$, on which $V^*$ acts isometrically is contained in $\mc{H} '$. It remains to show
that $\mc{H} '$ is co-invariant for $V$, and that $V^* | _{\mc{H} '}$ is an isometry.

Let $U$ be a co-isometric extension (\emph{e.g.} a Cuntz unitary dilation) of $V$ on $\K \supseteq \mc{H}$. If $h \in \mc{H} '$, then $\tau ^* h \in  \mc{H} ' _n$. Hence,
$$ \tau ^* h \perp \K \otimes \C ^n \ominus \ra{V-Z} = (I -UZ^* ) ^{-1} \left( \left( \K \otimes \C ^n \right) \ominus \ran{V}  \otimes \C ^n \right).$$  This happens if and only if
$$ (I -Z U^*) ^{-1} \tau ^* h \in \ran{V} \otimes \C ^n, $$ and this implies
$$ \bigvee _{\substack{\tau, \la \in \C ^n \\ Z \in \B ^d _n }} \la (I -Z U^*) ^{-1} \tau ^* h \in \ran{V}. $$ By the last lemma this happens if and only if
$$\bigvee _{\alpha \in \F ^d } (U^*) ^{\alpha } h \in \ran{V}. $$
Given any $VF \in \ran{V}$, it is easy to see that by definition
$$ (I -ZU^* ) \tau ^* VF  \in \ra{V-Z}. $$ Hence,
 \begin{align*} \ra{V-Z} & \supseteq   (I -ZU^* )\bigvee _{\substack{h \in \mc{H} ' \\ \la \in \C ^n ; \alpha, \beta \in \F ^d}}  \tau ^* (U^*) ^\beta (U^* ) ^\alpha h && \\ & \supseteq (I -ZU^*) \bigvee _{ \substack{ h \in \mc{H} '; \  \alpha \in \F ^d    \\ \tau, \la , \ka \in \C ^n; W \in \B ^d _n}}  \tau ^* \la (I-WU^*) ^{-1} \ka ^* (U^*) ^\alpha h , && \mbox{(by Lemma \ref{NCfact})} \\
& \supseteq \bigvee _{\substack{ h \in \mc{H}' ; \alpha \in \F ^d \\ \ka \in \C ^n}} \ka ^* (U^*) ^\alpha h, && \end{align*}
so that $(U^*) ^\alpha h \in \mc{H} ' $ for all $\alpha \in \F ^d $ and all $h \in \mc{H} '$.

 Given any $h \in \mc{H} ' \subset \ran{V}$, $h = V H$ where  $V^* V H = H$ so that
 $$ V^* h = V ^* V H = U^* V H = U^* h. $$ This proves that $\mc{H} ' $ is co-invariant for $V$ (\emph{i.e.} co-invariant for each $V_k$), and that $V$ is co-isometric on $\mc{H} '$.
\end{proof}

Although Theorem \ref{CKthm} is an immediate consequence of Theorem \ref{Kthm} under the assumption that $V$ is a $d$-partial isometry, the above proof can be modified to prove Theorem \ref{CKthm} directly.

\begin{lemma}\label{RCdecomp}
    Let $T : \mc{H} \otimes \C ^d \rightarrow \mc{H}$ be a row contraction. Then $\mc{H} \otimes \C ^d$ decomposes as $\mc{H} \otimes \C ^d = \mc{H} _0 \oplus \mc{H} _1$ and $\mc{H} = \mc{H} _0 ' \oplus \mc{H} _1 '$ where
$ V := T  P _{\mc{H} _0}$ is a row partial isometry with initial space $\mc{H} _0 \subseteq \mc{H} \otimes \C ^d$ and final space $\mc{H} _0 ' \subseteq \mc{H}$. The row contraction $C := -T P _{\mc{H} _1}$ is a pure row contraction, \emph{i.e.}, $\| C \mbf{h} \| < \| \mbf{h} \|$ for any $\mbf{h} \in \mc{H} _1 \otimes \C ^d$, with final space
$\ran{C} \subseteq \mc{H} _1 ' = \ran{V} ^\perp$.
\end{lemma}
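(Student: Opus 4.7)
The plan is to decompose the domain $\H \otimes \C^d$ using the spectral structure of the defect operator $D_T = \sqrt{I - T^*T}$, specifically by separating out the subspace where $T$ acts isometrically from its orthogonal complement where it acts strictly contractively.

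First, I would set $\H_0 := \ker D_T$ and $\H_1 := \H_0^\perp$. Since $D_T$ is a positive operator, $\ker D_T = \ker D_T^2 = \ker(I - T^*T)$, so on $\H_0$ the operator $T^*T$ acts as the identity. From $\|T\mbf{h}\|^2 = \|\mbf{h}\|^2 - \|D_T \mbf{h}\|^2$ it follows immediately that $V := T P_{\H_0}$ is isometric on $\H_0$ and zero on $\H_1$, hence a partial isometry with initial space exactly $\H_0$. Define $\H_0' := \ran V$ and $\H_1' := (\H_0')^\perp$, so $\H = \H_0' \oplus \H_1'$ gives the claimed orthogonal decomposition of the codomain.

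Next, for $C := -TP_{\H_1}$, I would show pure contractivity: any nonzero $\mbf{h} \in \H_1$ lies in $(\ker D_T)^\perp$, so $D_T \mbf{h} \neq 0$ (otherwise $\mbf{h}$ would belong to $\ker D_T \cap \ker(D_T)^\perp = \{0\}$), and hence $\|C\mbf{h}\|^2 = \|\mbf{h}\|^2 - \|D_T \mbf{h}\|^2 < \|\mbf{h}\|^2$. This gives the strict inequality defining a pure row contraction.

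Finally, I would verify that $\ran C \perp \ran V$. Taking any $\mbf{h}_0 \in \H_0$ and $\mbf{h}_1 \in \H_1$, the computation
\[
\ip{T\mbf{h}_0}{T\mbf{h}_1} = \ip{T^*T\mbf{h}_0}{\mbf{h}_1} = \ip{\mbf{h}_0}{\mbf{h}_1} = 0,
\]
using $T^*T\mbf{h}_0 = \mbf{h}_0$ and orthogonality of $\H_0, \H_1$, shows $\ran C \subseteq (\ran V)^\perp = \H_1'$. The entire argument is essentially a one-variable computation once the correct spectral subspace $\ker D_T$ is identified; the only mild subtlety is the direction that says $\mbf{h} \in \H_1$ forces $D_T \mbf{h} \neq 0$, which is what makes $C$ \emph{pure} rather than merely contractive, but this is immediate from self-adjointness of $D_T$.
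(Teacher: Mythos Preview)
Your proof is correct and follows essentially the same approach as the paper: both take $\H_0 = \ker D_T$ and read off the isometric and pure parts from the identity $\|T\mbf{h}\|^2 = \|\mbf{h}\|^2 - \|D_T\mbf{h}\|^2$. The only cosmetic difference is that the paper appeals to Lemma~\ref{contractext} to get $\ran{C} \subseteq \ran{V}^\perp$, whereas you verify this directly via the one-line computation $\ip{T\mbf{h}_0}{T\mbf{h}_1} = \ip{T^*T\mbf{h}_0}{\mbf{h}_1} = \ip{\mbf{h}_0}{\mbf{h}_1} = 0$.
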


\begin{proof}
 Let $\mc{H} _0  := \ran{D_T} ^\perp = \ker{D_T}$, where recall $D_T = \sqrt{ I - T ^*T}$. It is clear that $V | _{\mc{H} _0 }$ is an isometry, and can be extended to a row partial isometry on $\mc{H} \otimes \C ^d$ with initial space $\mc{H} _0$. It follows that $C := V -  T$ is a pure contraction and by Lemma \ref{contractext} the initial space of $C$ is contained in $\mc{H} _1 = \mc{H} _0 ^\perp$ and the final space of $C$ is contained in $\mc{H} _1 ' = (\mc{H} _0 ' ) ^\perp$.
\end{proof}

\begin{defn} \label{PIPCdecomp}
    The above decomposition $T = V -C$ of any row contraction on $\mc{H}$ into a row partial isometry, $V$, on $\mc{H} \otimes \C ^d$ and a pure row contraction
$C$ on $\mc{H} \otimes \C ^d$ with $\ker{C} ^\perp \subseteq \ker{V}$ and $\ran{C} \subseteq \ran{V} ^\perp$, will be called the \emph{isometric-pure decomposition} of $T$.
\end{defn}

\begin{proof}{ (of Corollary \ref{CCNCcor})}
By Lemma \ref{RCdecomp}, it follows that $T$ is CNC if and only if its partial isometric part $V$ is CNC. Since $T \supseteq V$ is a row contractive extension of $V$ on $\mc{H}$, Lemma \ref{TextVlemma}, Lemma \ref{NCfact} and Theorem \ref{Kthm} imply that $T$ is CNC if and only if
\ba \mc{H} & = & \bigvee _{\la \in \C ^n; \ Z \in \B ^d _n} \la (I -VZ^* ) ^{-1} \ran{V} ^\perp \otimes \C ^n \nn \\
& = & \bigvee _{\la \in \C ^n; \ Z \in \B ^d _n} \la (I - TZ^* ) ^{-1} \ran{D_{T^*}} \otimes \C ^n \nn \\
& = & \bigvee _{\alpha \in \F ^d} T^\alpha \ran{D _{T^*}}. \nn \ea

If $T$ is CCNC then it is clearly CNC, since in this case,
$$ \bigvee _{\alpha \in \F ^d} T^\alpha \ran{D_{T^*}} \supseteq \bigvee _{\n \in \N ^d} T^\n \ran{D_{T^*}} = \mc{H}. $$
\end{proof}

\section{Model maps for CCNC row partial isometries}
\label{modelmapsect}

Let $V : \mc{H} \otimes \C ^d \rightarrow \mc{H}$ be a CCNC row partial isometry. By definition,
$$ \mc{H} = \bigvee _{z \in \B ^d} \ra{V-z} ^\perp = \bigvee _{z \in \B ^d}  (I -Vz^*)^{-1} \ran{V} ^\perp,$$ and $V$ is CNC by the results of the previous section.

\begin{defn}
A \emph{model triple}, $(\ga , \J _\infty , \J _0)$ for a CCNC row partial isometry $V$ on $\mc{H}$ consists of auxiliary Hilbert spaces $\J _\infty , \J _0$ of dimension $\ker{V}$ and $\ran{V} ^\perp$, respectively, and a \emph{model map}, $\ga$, defined on $\B ^d \cup \{ \infty \}$:
$$ \ga  : \left\{  \arraycolsep=2pt\def\arraystretch{1.2} \begin{array}{c}  \B ^d \rightarrow  \scr{L} ( \J _0 , \ra{V-z} ^\perp ) \\ \{ \infty \} \rightarrow \scr{L} (\J _\infty , \ker{V} ) \end{array}  \right.,$$ such that $\ga (z)$ is a linear isomorphism for any $z \in \B ^d$ and $\ga (0), \ga (\infty)$ are onto isometries.

We say that $(\Ga , \J _\infty , \J _0)$ is an \emph{analytic model triple} and that $\Ga$ is an \emph{analytic model map} if $z \mapsto \Ga (z)$
is anti-analytic in $ \B ^d$.
\end{defn}

Lemma \ref{TextVlemma} shows that any row contractive extension $T \supseteq V$ of a row partial isometry on $\mc{H}$ gives rise to an analytic model map:
Let $\Ga _T (0) : \J _0 \rightarrow \ran{V} ^\perp,  \ \Ga _T (\infty) : \J _\infty \rightarrow \ker{V}$ be any two fixed onto isometries and then
$$ \Ga _T (z) := (I -Tz^* ) ^{-1} \Ga _T (0); \quad \quad z \in \B ^d,$$ defines an analytic model triple. Most simply, we are free to choose $T =V$.

Let $(\Ga , \J _\infty , \J _0)$ be an analytic model triple for a CCNC row partial isometry $V$ on $\mc{H}$.  For any $h \in \mc{H}$, define $$ \hat{h} ^\Ga (z) := \Ga (z) ^* h, $$ an analytic $\J _0$-valued function on $\B ^d$. (When it is clear from context, we will sometimes omit the superscript $\Ga$.) Let $\hat{\mc{H}} ^\Ga$ be the vector space of
all $\hat{h} = \hat{h}^\Ga$ for $h \in \mc{H}$. We endow $\hat{\mc{H} } ^\Ga$ with an inner product:
$$ \ip{\hat{h}}{\hat{g}} _{\Ga} := \ip{h}{g} _\mc{H}. $$

\begin{prop}
    The sesquilinear form $\ip{\cdot}{\cdot} _{\Ga}$ is an inner product on $\hat{\mc{H} } ^\Ga$, and $\hat{\mc{H}} ^\Ga$ is a Hilbert space with respect to this inner product.

    The Hilbert space $\hat{\mc{H}}^\Ga$ is the reproducing kernel Hilbert space of $\J _0$-valued analytic functions on $\B ^d$ with reproducing kernel
    $$ \hat{K} ^\Ga (z,w) := \Ga (z) ^* \Ga (w) \in \scr{L} (\J _0), \quad \quad \hat{\mc{H}} ^\Ga = \mc{H} (\hat{K} ^\Ga ). $$ The map $\hat{U} ^\Ga : \mc{H} \rightarrow \hat{\mc{H} } ^\Ga $ defined by $\hat{U} ^\Ga h = \hat{h} ^\Ga$ is an onto isometry and
    $$ \hat{K} _z ^\Ga g = \hat{U} ^\Ga \Ga (z) g = \widehat{\Ga (z)g}; \quad \quad g \in \J _0. $$
\end{prop}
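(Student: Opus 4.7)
The plan is to proceed in four bite-sized steps, the only nontrivial one being that the sesquilinear form is genuinely an inner product (positive definiteness); everything else is essentially bookkeeping once one has the map $h \mapsto \hat h^\Ga$ in hand as an isometry.

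First I would verify well-definedness of $\ip{\cdot}{\cdot}_\Ga$. Sesquilinearity and non-negativity are immediate from the definition $\ip{\hat h}{\hat g}_\Ga := \ip{h}{g}_\H$, so the only issue is consistency: one must check that $h \mapsto \hat h^\Ga$ is injective, i.e.\ that $\Ga(z)^* h = 0$ for every $z \in \B^d$ forces $h=0$. Since $\Ga(z)$ is a linear isomorphism onto $\ra{V-z}^\perp$, the vanishing $\Ga(z)^* h \equiv 0$ is equivalent to $h \perp \ra{V-z}^\perp$ for all $z \in \B^d$, and the CCNC hypothesis $\H = \bigvee_{z \in \B^d} \ra{V-z}^\perp$ then forces $h=0$. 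This is the only step that really uses the hypotheses, so it is the crux.

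Once injectivity is established, $\hat U^\Ga : \H \to \hat\H^\Ga$ is by construction a bijective isometry, and $\hat\H^\Ga$ inherits the Hilbert space structure of $\H$. For the reproducing kernel assertion I would fix $z \in \B^d$ and $g \in \J_0$, compute
\[
\ip{g}{\hat h^\Ga(z)}_{\J_0} = \ip{g}{\Ga(z)^* h}_{\J_0} = \ip{\Ga(z) g}{h}_\H = \ip{\widehat{\Ga(z) g}^\Ga}{\hat h^\Ga}_\Ga,
\]
which simultaneously shows that point evaluation at $z$ is bounded (with norm $\le \|\Ga(z)\|$) and identifies the kernel element: $\hat K_z^\Ga g = \widehat{\Ga(z) g}^\Ga$. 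Evaluating this at $w$ yields $(\hat K_z^\Ga g)(w) = \Ga(w)^* \Ga(z) g$, so $\hat K^\Ga(z,w) := (\hat K^\Ga_z)^* \hat K^\Ga_w = \Ga(z)^* \Ga(w)$, as claimed.

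Finally, analyticity of the functions $\hat h^\Ga$ on $\B^d$ follows immediately from the anti-analyticity of $\Ga$: $\hat h^\Ga(z) = \Ga(z)^* h$ is a composition of the analytic map $z \mapsto \Ga(z)^*$ with the bounded linear functional $h$. The main (really only) obstacle is the injectivity step, and the proof essentially reduces to invoking the CCNC characterization from Theorem \ref{CKthm} / Corollary \ref{CCNCcor} together with the defining property that each $\Ga(z)$ is an isomorphism onto $\ra{V-z}^\perp$.
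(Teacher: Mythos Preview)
Your proposal is correct and follows essentially the same approach as the paper: the key point is injectivity of $h \mapsto \hat h^\Ga$, which both you and the paper establish via the CCNC condition $\bigvee_{z \in \B^d} \ran{\Ga(z)} = \H$, and the reproducing kernel identification is the same short computation. One minor remark: you do not actually need to invoke Theorem~\ref{CKthm} or Corollary~\ref{CCNCcor} here, since $\bigvee_{z \in \B^d} \ra{V-z}^\perp = \H$ is precisely the definition of CCNC for a row partial isometry (combined with the fact that $\Ga(z)$ is an isomorphism onto $\ra{V-z}^\perp$).
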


\begin{proof}
    This is all pretty easy to verify. For simplicity we omit the superscript $\Ga$. In order to show that $\ip{\hat{h}}{\hat{g}} _\Ga := \ip{h}{g} _\mc{H}$ is an inner product, the only non-immediate property to check is that there are no non-zero vectors of zero length with respect to this sesquilinear form, or equivalently that $\hat{h} (z) = 0 $ for all
$z \in \B ^d $ implies that $h  = 0$. This is clear since $\hat{h} (z) = \Ga (z) ^* h$, and since $V$ is CCNC,
$$ \bigvee _{z \in \B ^d } \ran{\Ga (z) } = \bigvee _{z\in \B ^d} \ra{V-z} ^\perp = \mc{H}, $$ so that $\bigcap _{z \in \B ^d } \ker{\Ga (z) ^*} = \{ 0 \}$.

The map $\hat{U} : \mc{H} \rightarrow \hat{\mc{H}}$ is an onto isometry by definition of the inner product in $\hat{\mc{H}}$. For any $g \in \J _0$, compute
\ba \ip{\hat{U} \Ga (z) g }{ \hat{h} }_{\hat{\mc{H}}} & = & \ip{\Ga (z) g}{h} _{\mc{H}} \nn \\
& = & \ip{g}{\Ga (z) ^* h} _{\J _0} \nn \\
& = & \ip{g}{\hat{h} (z)} _{\J _0} \nn \\
& =: & \ip{g}{\hat{K} _z ^* \hat{h}} _{\J _0}. \ea This proves simultaneously that $\hat{\mc{H}}$ is a RKHS of analytic $\J _0$-valued functions on $\B ^d$ with point evaluation maps
$$ \hat{K} _z = \hat{U} \Ga (z) \in \scr{L} (\J _0 , \hat{\mc{H}} ), $$ and reproducing kernel
$$ \hat{K} (z,w) = \hat{K} _z ^* \hat{K} _w = \Ga (z) ^* \Ga (w) \in \scr{L} (\J _0 ). $$
\end{proof}
\begin{prop} \label{multbyz}
Let $V : \mc{H} \otimes \C ^d \rightarrow \mc{H} $ be a CCNC row partial isometry with analytic model map $\Ga$.
The image, $\hat{V} ^\Ga := \hat{U} ^\Ga V (\hat{U} ^\Ga ) ^*$ of $V$ under the unitary map $\hat{U} ^\Ga$ onto the model RKHS $\hat{\mc{H}} ^\Ga$ has initial and final spaces
 \begin{align*} &\ker{\hat{V} ^\Ga } ^\perp \subseteq \{ \hat{\mbf{h}}  \in \hat{\mc{H}} ^\Ga   \otimes \C ^d | \ z \hat{\mbf{h}}  (z) = \hat{g}  (z) \ \mbox{for some} \ \hat{g}   \in \hat{\mc{H}} ^\Ga  \ \mbox{with} \ \|\hat{\mbf{h}}  \|  = \| \hat{g}  \| \}, && \\
 & \ran{\hat{V} ^\Ga }  = \{ \hat{h}  \in \hat{\mc{H}} ^\Ga  | \ \hat{h}  (0) =0 \}, && \end{align*} and $\hat{V} ^\Ga$ acts as multiplication by $z = (z_1 , ... , z_d)$ on its
initial space: For any $\hat{\mbf{h}}  = (\hat{h} _1   , ... , \hat{h} _d   ) ^T \in \ker{\hat{V} ^\Ga} ^\perp$,
$$ (\hat{V} ^\Ga \hat{\mbf{h}})  (z) = \left( (\hat{V} _1 ^\Ga, ..., \hat{V}_d ^\Ga ) \bbm \hat{h} _1  \\ \vdots \\  \hat{h} _d  \ebm \right) (z) = z_1 \hat{h} _1  (z) + ... + z_d \hat{h} _d  (z) = z \hat{\mbf{h}}  (z). $$
\end{prop}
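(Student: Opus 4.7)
The plan is to transfer everything through the unitary identification $\hat{U}^\Ga : \H \to \hat{\H}^\Ga$ and reduce each of the three claims to an identity about $V$ and $\Ga(z)$. The three things to verify are: (i) the range formula $\ran{\hat{V}^\Ga} = \{\hat{h} : \hat{h}(0) = 0\}$; (ii) the pointwise action $(\hat{V}^\Ga \hat{\mbf{h}})(z) = z\hat{\mbf{h}}(z)$ for $\hat{\mbf{h}}$ in the initial space; and (iii) the claimed inclusion for the initial space. Since (iii) follows cheaply from (ii) once $\hat{V}^\Ga$ is recognized as a partial isometry, the real content is (i) and (ii).

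For (i), I would note that $\ran{V}^\perp = \ra{V-0}^\perp = \ran{\Ga(0)}$ by definition of the model map, and that $\hat{U}^\Ga \Ga(0) g = \hat{K}_0^\Ga g$ by the previous proposition. Hence $\hat{U}^\Ga \ran{V}^\perp = \ran{\hat{K}_0^\Ga}$, so
\[
\ran{\hat{V}^\Ga} = (\hat{U}^\Ga \ran{V}^\perp)^\perp = \ran{\hat{K}_0^\Ga}^\perp.
\]
The standard reproducing identity $\ip{\hat{h}}{\hat{K}_0^\Ga g}_{\hat{\H}^\Ga} = \ip{\hat{h}(0)}{g}_{\J_0}$ then identifies the right-hand side with $\{\hat{h} \in \hat{\H}^\Ga : \hat{h}(0) = 0\}$.

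For (ii), I would write any $\hat{\mbf{h}}$ in the initial space of $\hat{V}^\Ga$ as $\hat{\mbf{h}} = (\hat{U}^\Ga \otimes I_d)\mbf{h}$ with $\mbf{h} \in \ker{V}^\perp$, and compute $(\hat{V}^\Ga \hat{\mbf{h}})(z) = \Ga(z)^* V \mbf{h}$. The target identity $(\hat{V}^\Ga \hat{\mbf{h}})(z) = \sum_j z_j \hat{h}_j(z) = \sum_j z_j \Ga(z)^* h_j$ is then equivalent to
\[
\Ga(z)^*(V-z)\mbf{h} = 0 \quad \text{for all } \mbf{h} \in \ker{V}^\perp.
\]
This is the crux. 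Since $\ker{V}^\perp = \ran{V^*V}$, write $\mbf{h} = V^*V \mbf{h}$, so the left-hand side becomes $\Ga(z)^*(V-z)V^*V\mbf{h}$. By the very definition of $\ra{V-z}$ as the range of $(V-z)V^*V$, combined with the model-map requirement $\ran{\Ga(z)} = \ra{V-z}^\perp$, we get $\Ga(z)^*(V-z)V^*V = 0$, which finishes (ii).

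For (iii), given $\hat{\mbf{h}} \in \ker{\hat{V}^\Ga}^\perp$ I would simply take $\hat{g} := \hat{V}^\Ga \hat{\mbf{h}} \in \hat{\H}^\Ga$; the formula from (ii) gives $z\hat{\mbf{h}}(z) = \hat{g}(z)$, and since $\hat{V}^\Ga$ is unitarily equivalent to the partial isometry $V$, restriction to $\ker{\hat{V}^\Ga}^\perp$ is isometric, giving $\|\hat{g}\| = \|\hat{\mbf{h}}\|$. I expect the main conceptual obstacle to be purely notational bookkeeping: correctly tracking that $V - z$ is the row operator with components $V_j - z_j$, that $V^*V$ is the projection onto the initial space of $V$, and that $\Ga$ is \emph{anti}-analytic so $\Ga(z)^*$ is honestly analytic on $\B^d$—after which the key identity $\Ga(z)^*(V-z)V^*V = 0$ is tautological.
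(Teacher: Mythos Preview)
Your proposal is correct and follows essentially the same route as the paper's proof: the paper writes $(V-z)\mbf{h} = (V-z)V^*V\mbf{h} \in \ra{V-z} = \ran{\Ga(z)}^\perp$ to kill that term and obtain $\Ga(z)^* V\mbf{h} = z\hat{\mbf{h}}(z)$, which is exactly your key identity $\Ga(z)^*(V-z)V^*V = 0$; your arguments for the range description and the initial-space inclusion are likewise the same as the paper's, only spelled out in slightly more detail.
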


\begin{proof}
Again, we omit the superscript $\Ga$ to simplify notation. First suppose that $\hat{\mbf{h}} \in \ker{\hat{V}} ^\perp$. Then,
\ba \hat{V} \hat{\mbf{h}} & = & ( \hat{U} V \mbf{h} ) (z) \nn \\
& = & \Ga (z) ^* (V \mbf{h} ) \nn \\
& = & \Ga (z) ^* \left( (V -z ) \mbf{h} +z \mbf{h} \right) \nn \\
& = & \Ga (z) ^* z \mbf{h} \quad \quad \quad \left(\mbox{since} \ (V-z) \mbf{h} = (V-z) V^*V \mbf{h} \in \ra{V-z} = \ran{\Ga (z) } ^\perp \right) \nn \\
& = & z (\Ga (z) ^* \otimes I_d ) \mbf{h} \nn \\
& = & z\hat{\mbf{h}} (z). \nn \ea
This proves that $\hat{V}$ acts as multiplication by $z$ on its initial space and that
$$ \ker{\hat{V}} ^\perp \subseteq \{ \hat{\mbf{h}} \in \hat{\mc{H}} \otimes \C ^d | \ z \hat{\mbf{h}} (z) \in \hat{\mc{H}} \ \mbox{and} \ \| z \hat{\mbf{h}} \| = \| \hat{\mbf{h}} \| \}, $$
since $\hat{V}$ is a partial isometry.  The range statement is clear since $ 0 = \hat{h} (0) = \Ga (0) ^* h$ if and only if $h \in \ran{V}$.
\end{proof}

Any model triple $(\ga , \J _\infty , \J _0)$ for a CCNC row partial isometry $V$ can be used to define a \emph{characteristic function}, $b_V ^\ga$, on
$\B ^d$ as follows:
First consider
 $$D ^\ga (z) := \ga (z) ^* \ga (0) = \hat{K} ^\ga (z, 0) \in \scr{L} (\J _0 ), $$
$$ N ^\ga (z) := (\ga (z) ^* \otimes I_d) \ga (\infty ) \in \scr{L} (\J _\infty , \J _0 \otimes \C ^d). $$ The next lemma below shows that $D^{\ga} (z)$
is always a bounded, invertible operator on $\J _0$, so that we can define
\be b_V ^\ga (z) := D ^\ga (z) ^{-1} z N^\ga (z) \in \scr{L} (\J _\infty  , \J _0 ). \ee The function $b_V ^\ga$ is called a (representative) \emph{characteristic
function} of the CCNC row partial isometry $V$. Observe that if $\ga = \Ga$ is an analytic model map then $b_V ^\Ga$ is analytic on $\B ^d$.

\begin{lemma} \label{Dinvert}
    Given any model triple $(\ga , \J _\infty , \J _0)$ for a CCNC row partial isometry $V : \mc{H} \otimes \C ^d \rightarrow \mc{H}$, the operator $D^{\ga } (z) := \ga (z) ^* \ga (0)$ is invertible for any $z \in \B ^d$.
\end{lemma}

\begin{proof}
    Let $D(z) := D^\ga (z) = \ga(z) ^* \ga (0) \in \scr{L} (\J _0 )$, we first show that $D(z) ^*$ has trivial kernel. Suppose that $D(z) ^* g = 0$. Then for any $h \in \J _0$,
\ba 0 & = & \ip{h}{D(z)^* g} _{\J _0}  = \ip{h}{\ga(0) ^* \ga (z) g}_{\J _0} \nn \\
 & = & \ip{\ga (0) h}{\ga (z)g} _\mc{H}, \label{zerogform} \ea since we assume $D(z) ^* g =0$. By definition, $\ga (z) : \J _0 \rightarrow \ra{V-z} ^\perp$ is an isomorphism (bounded, onto, invertible), and $\ga (0)$ is an isometry onto $\ran{V} ^\perp$. By Lemma \ref{TextVlemma}, $\ga (z) g = (I -Vz^*) ^{-1} \ga (0) f$ for some non-zero $\ga (0) f \in \ran{V} ^\perp$. Using that $\ga (0) f \perp \ran{V}$, and that
$Vz^* = V_1 z_1 ^* + ... + V_d z_d ^*$ is a strict contraction for $z \in \B ^d$,
 \ba 0  &= & \ip{\ga (0) f}{\ga (z) g} _\mc{H} \quad \quad \mbox{(Equation \ref{zerogform} with $h = f$)} \nn \\
 & = &  \ip{\ga (0) f }{(I-Vz^* ) ^{-1} \ga (0) f} \nn \\
 &= & \sum _{k=0} ^\infty \ip{\ga (0) f}{ (Vz^*) ^k \ga (0) f } \quad \quad \mbox{(convergent geometric series)} \nn \\
 & = & \ip{\ga (0) f}{\ga (0) f} \quad \quad \mbox{($(Vz^* ) ^k \ga (0) f \in \ran{V}$ for $k\geq 1$)}, \nn \ea a contradiction. In the case where $\J _0$ is finite dimensional, this is enough to prove invertibility.

It remains to prove that $D(z)$ is bounded below. If not, then there is a sequence $(g_n ) \subset \J _0$ such that $\| g_n \| =1$ for all $n$, but
$ \| D (z) g_n \| \rightarrow 0$. Set $G_n := \ga (0) g_n$, $\|G_n \| = 1$ since $\ga (0)$ is an isometry. Again using that $\ga (z) : \J _0 \rightarrow \ra{V-z} ^\perp$, and $(I-Vz^* ) : \ra{V-z} ^\perp \rightarrow \ran{V} ^\perp$ are isomorphisms, if $H_n \in \ra{V-z} ^\perp$ and $h_n \in \J _0$ are chosen so that
$(I -Vz^*) H_n = G_n$ and $\ga (z) h_n = H_n$, then the norms of the sequences $(h_n ), \ (H_n )$ are uniformly bounded above and below by strictly positive constants. Then,
\ba \ip{h_n}{D(z) g_n} & = & \ip{\ga (z) h_n}{\ga (0) g_n} = \ip{H_n}{G_n} \nn \\
& = & \ip{H_n}{(I-Vz^*) H_n} \nn \\
& = & \ip{H_n}{(I-zV^*Vz^*) H_n} - \ip{H_n}{(V- z)V^*Vz^*H_n} \nn \\
& = & \ip{H_n}{(I-zV^*Vz^*) H_n}  \quad \quad \mbox{(Since $H_n \perp \ra{V-z}$.)} \nn \\
& \geq & (1 - |z| ^2 ) \| H_n \| ^2. \nn \ea
Since there are constants $C, c>0$ so that $\| h _n \| \leq C$, and $\| H _n \| \geq c >0$, the limit of the left hand side is zero while the right hand side
is bounded below by a strictly positive constant, a contradiction.
\end{proof}

\begin{defn}
    Let $b_1, b_2$ be two analytic functions on $\B ^d$ taking values in $\scr{L} (\mc{H} _1 , \J _1)$ and $\scr{L} (\mc{H} _2 , \J _2)$, respectively. We say that $b_1, b_2$ \emph{coincide} if there are fixed unitary $R \in \scr{L} (\J _1 , \J _2), Q \in \scr{L} ( \mc{H} _1 , \mc{H} _2 )$ so that $$ R b_1 (z) = b_2 (z) Q ; \quad \quad z \in \B ^d.$$ This clearly defines an equivalence relation on such functions, and we call the corresponding equivalence classes coincidence classes.
\end{defn}

\begin{lemma} \label{modmapinv}
    Let $V$ be a CCNC row partial isometry. Let $b_V ^\ga$ be a characteristic function for $V$ defined using any model triple $(\ga , \J _\infty , \J _0 )$. The coincidence class of $b_V ^\ga$ is invariant under the choice of model triple.
\end{lemma}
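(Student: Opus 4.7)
The plan is to exhibit explicit candidate coincidence unitaries from the isometric boundary data at $z=0$ and $z=\infty$, and then to show that the $z$-dependent portion of the transition between the two model maps is forced to cancel out of the characteristic function. Fix two model triples $(\ga_1, \J_\infty^{(1)}, \J_0^{(1)})$ and $(\ga_2, \J_\infty^{(2)}, \J_0^{(2)})$ for $V$ and define
\[ R := \ga_2(0)^* \ga_1(0) : \J_0^{(1)} \to \J_0^{(2)}, \qquad Q := \ga_2(\infty)^* \ga_1(\infty) : \J_\infty^{(1)} \to \J_\infty^{(2)}. \]
Since both $\ga_k(0)$ are onto isometries from $\J_0^{(k)}$ to the common space $\ran{V}^\perp$, and similarly both $\ga_k(\infty)$ are onto isometries to $\ker{V}$, the operators $R$ and $Q$ are immediately seen to be unitaries satisfying $\ga_1(0) = \ga_2(0) R$ and $\ga_1(\infty) = \ga_2(\infty) Q$.

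For a generic $z \in \B^d$, both $\ga_k(z)$ are linear isomorphisms of $\J_0^{(k)}$ onto the common defect space $\ra{V-z}^\perp$, so there is a unique bounded invertible $U(z) := \ga_2(z)^{-1} \ga_1(z) : \J_0^{(1)} \to \J_0^{(2)}$ with $\ga_1(z) = \ga_2(z) U(z)$; at $z=0$ this reduces to $U(0) = R$, although $U(z)$ is typically non-unitary for nonzero $z$. Taking adjoints gives $\ga_1(z)^* = U(z)^* \ga_2(z)^*$, and substituting into the definitions of $D$ and $N$ yields
\[ D^{\ga_1}(z) = U(z)^* D^{\ga_2}(z) R, \qquad N^{\ga_1}(z) = (U(z)^* \otimes I_d)\, N^{\ga_2}(z)\, Q. \]

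The decisive step is the elementary identity $z (A \otimes I_d) = A z$ as maps $\J_0^{(2)} \otimes \C^d \to \J_0^{(1)}$, valid for any linear $A : \J_0^{(2)} \to \J_0^{(1)}$, which follows at once from $z(h_1, \ldots, h_d) = z_1 h_1 + \ldots + z_d h_d$ acting only on the $\C^d$ factor. Applying this with $A = U(z)^*$, the factor $U(z)^*$ picked up by $z N^{\ga_1}(z)$ commutes through $z$ and cancels against the $(U(z)^*)^{-1}$ produced by $D^{\ga_1}(z)^{-1}$, giving
\[ b_V^{\ga_1}(z) \;=\; R^{-1} D^{\ga_2}(z)^{-1} z N^{\ga_2}(z) Q \;=\; R^{-1} b_V^{\ga_2}(z) Q, \]
equivalently $R\, b_V^{\ga_1}(z) = b_V^{\ga_2}(z)\, Q$, the desired coincidence relation. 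The only real obstacle is conceptual: one must recognize that the generally non-unitary, $z$-dependent reparametrization $U(z)$ between the two models drops out of the characteristic function precisely because of its particular sandwich form $D^{-1} z N$, with $z$ acting only on the $\C^d$ tensor factor that $U(z)^*$ does not see.
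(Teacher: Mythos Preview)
Your proof is correct and follows essentially the same route as the paper. The paper's transition operator $C_z := \ga(z)^* (\delta(z)\delta(z)^*)^{-1}\delta(z)$ is exactly your $U(z)^*$ in disguise (since $(\delta(z)\delta(z)^*)^{-1}\delta(z) = (\delta(z)^*)^{-1}$ on the defect space), and both arguments then verify the same relations $D^{\ga_1}(z) = U(z)^* D^{\ga_2}(z) R$ and $N^{\ga_1}(z) = (U(z)^* \otimes I_d) N^{\ga_2}(z) Q$ before cancelling $U(z)^*$ through $z$; your explicit isolation of the intertwining identity $z(A\otimes I_d)=Az$ is the only cosmetic difference.
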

    It follows, in particular, that any characteristic function $b^\ga _V$ for $V$ is analytic whether or not $\ga$ is an analytic model map, since $b^\ga _V$ coincides with $b^\Ga _V$ for any analytic model map $\Ga$ (and analytic model maps always exist).
\begin{proof}
Let $(\ga , \J _\infty , \J _0 ), (\delta,  \K _\infty , \K _0 )$ be any two choices of model triples for $V$. Since both $\K _0 , \J _0$ are isomorphic to $\ran{V} ^\perp$ and $\K _\infty , \J _\infty$ are isomorphic to $\ker{V}$, we can define onto isometries $R = \ga (0) ^* \delta (0)   \in  \scr{L} (\K _0 , \J _0 )$ and
$Q :=  \ga (\infty ) ^* \delta (\infty)  \in \scr{L} ( \K _\infty , \J _\infty )$. Moreover $C _z  := \ga (z) ^* (\delta (z)  \delta (z) ^*  ) ^{-1} \delta (z)$ is a linear isomorphism (bounded and invertible) of $\K _0$ onto $\J _0$ for any $z \in \B ^d$. As before,
$$  D ^\ga (z) := \ga (z) ^* \ga (0) ; \quad \quad N ^\ga (z) = (\ga (z) ^* \otimes I_d ) \ga (\infty ), $$
$$ b _V ^\ga (z) := D^\ga (z) ^{-1} z N ^\ga (z), $$ and $b_V ^\delta$ is defined analogously. It follows that:
\ba R b ^\delta _V (z) Q & = & R D ^\delta (z) ^{-1}   C_z ^{-1} C _z z N ^\delta (z) Q \nn \\
& = & \left( C_z D ^\delta (z) R ^* \right) ^{-1} z \left( (C_z \otimes I_d) N^\delta (z) Q \right). \nn \ea
In particular,
\ba C_z D ^\delta (z) R ^* & = & \ga (z) ^* (\delta (z) \delta (z)^* ) ^{-1} \delta (z) \delta (z) ^* \delta (0) \delta (0) ^* \ga (0) \nn \\
& = & \ga (z) ^* \ga (0) \nn \\
& = & D ^\ga (z), \nn \ea and similarly
$$ (C_z \otimes I_d) N^\delta (z) Q  = N ^\ga (z), $$ so that $R b ^\delta _V (z) Q = b ^\ga (z)$, and $b ^\delta _V, b ^\ga _V$ belong to the same equivalence (coincidence) class.
\end{proof}

\begin{thm} \label{Glerep}
Let $\hat{\mc{H}} ^\Ga $ be the abstract model RKHS on $\B ^d$ defined using an analytic model triple $(\Ga , \J _0 , \J _\infty)$ for $V$. The reproducing kernel for $\hat{\mc{H}} ^\Ga$ can be written:
\ba \hat{K} ^\Ga (z,w) & = & \frac{D ^\Ga (z)D ^\Ga (w) ^* - zN ^\Ga (z) N ^\Ga (w) ^* w^*}{1-zw^*} \nn \\
& = &  D ^\Ga (z)  \frac{I - b_V ^\Ga (z) b_V ^\Ga (w) ^*}{1-zw^*} D ^\Ga (w)^* \in \scr{L} (\J _0 ). \nn \ea
It follows that $b_V ^\Ga \in \scr{S} _d (\J _\infty , \J _0 ) $ is Schur-class, and that multiplication by $D ^\Ga (z)$ is an isometry, $M^\Ga := M_{D ^\Ga}$,
of $\scr{H} (b_V ^\Ga )$ onto $\hat{\mc{H}} ^\Ga$.
\end{thm}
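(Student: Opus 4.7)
I would verify the reproducing kernel identity by direct operator algebra, and then deduce the Schur class and multiplier isometry statements from standard RKHS facts. The decisive structural inputs are the idempotencies $\Ga (0) \Ga (0)^* = I - VV^*$ and $\Ga (\infty) \Ga (\infty)^* = I - V^*V$ (since $\Ga (0), \Ga (\infty)$ are onto isometries onto $\ran{V}^\perp$ and $\ker{V}$), together with the geometric relation $VV^* \Ga (z) = Vz^* \Ga (z)$: indeed, the containment $\Ga (z) \J _0 \subseteq \ra{V-z}^\perp$ built into the definition of a model map gives $V^*V(V-z)^* \Ga (z) = 0$, so $(V-z)^* \Ga (z)$ maps into $\ker{V}$, hence $V(V-z)^* \Ga (z) = 0$, i.e.\ $VV^* \Ga (z) = Vz^* \Ga (z)$.

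First I would rewrite both sides of the proposed kernel formula. Using the elementary identity $z(T \otimes I_d) = Tz$ for any $T \in \scr{L} (\J _0)$, one has $z N^\Ga (z) = \Ga (z)^* z \Ga (\infty)$ and $N^\Ga (w)^* w^* = \Ga (\infty)^* w^* \Ga (w)$; inserting $\Ga (\infty) \Ga (\infty)^* = I - V^*V$ and expanding $z(I - V^*V) w^* = zw^* - zV^*Vw^*$ yields
$$ z N^\Ga (z) N^\Ga (w)^* w^* = zw^* \Ga (z)^* \Ga (w) - \Ga (z)^* zV^* V w^* \Ga (w), $$
while $D^\Ga (z) D^\Ga (w)^* = \Ga (z)^* (I - VV^*) \Ga (w)$. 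Substituting these into $D^\Ga (z) D^\Ga (w)^* - z N^\Ga (z) N^\Ga (w)^* w^*$ and cancelling the two $zw^* \Ga (z)^* \Ga (w)$ terms, the desired identity $(1 - zw^*) \hat K^\Ga (z,w) = D^\Ga (z) D^\Ga (w)^* - z N^\Ga (z) N^\Ga (w)^* w^*$ reduces to the single equation $\Ga (z)^* VV^* \Ga (w) = \Ga (z)^* zV^* V w^* \Ga (w)$, which follows in one step by combining $VV^* \Ga (w) = Vw^* \Ga (w)$, the adjoint relation $\Ga (z)^* zV^* = \Ga (z)^* VV^*$, and the idempotency $(VV^*)^2 = VV^*$:
$$ \Ga (z)^* zV^* V w^* \Ga (w) = \Ga (z)^* zV^* (VV^* \Ga (w)) = \Ga (z)^* VV^* \cdot VV^* \Ga (w) = \Ga (z)^* VV^* \Ga (w). $$

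The second form of the kernel is then immediate on substituting the defining relation $D^\Ga (z) b_V^\Ga (z) = z N^\Ga (z)$ (meaningful by pointwise invertibility of $D^\Ga (z)$) and its adjoint into $D^\Ga (z) (I - b_V^\Ga (z) b_V^\Ga (w)^*) D^\Ga (w)^*$. To conclude $b_V^\Ga \in \scr{S} _d (\J _\infty, \J _0)$, observe that $\hat K^\Ga$ is a positive kernel as a reproducing kernel, and the factorisation $\hat K^\Ga = D^\Ga k^{b_V^\Ga} (D^\Ga)^*$ combined with pointwise invertibility of $D^\Ga (z)$ transfers positivity to $k^{b_V^\Ga}$; then \cite[Theorem 2.1]{Ball2001-lift} gives $b_V^\Ga \in \scr{S} _d (\J _\infty, \J _0)$. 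The same factorisation shows that $M_{D^\Ga}$ sends $k_w^{b_V^\Ga} \xi$ to $\hat K_w^\Ga (D^\Ga (w)^{-*} \xi)$ with matching norms, so since point-evaluation kernels span each space densely and $D^\Ga (w)$ is invertible, $M_{D^\Ga}$ extends to an isometric bijection $\scr{H} (b_V^\Ga) \to \hat{\H} ^\Ga$. The main obstacle in the whole argument is recognising the single algebraic identity $\Ga (z)^* VV^* \Ga (w) = \Ga (z)^* zV^* V w^* \Ga (w)$ that controls the formula; beyond that, everything is routine bookkeeping about which operators act on $\H$ versus $\H \otimes \C^d$, combined with the standard partial isometric relations for $V$.
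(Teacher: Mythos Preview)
Your argument is correct. The key identity $VV^* \Ga (z) = Vz^* \Ga (z)$ follows exactly as you say from the defining property $\ran{\Ga (z)} \subseteq \ra{V-z}^\perp$, and the reduction of the kernel formula to the single equation $\Ga (z)^* VV^* \Ga (w) = \Ga (z)^* zV^* V w^* \Ga (w)$ is clean and accurate. The deductions about the Schur class membership and the unitary multiplier are standard and correctly argued.

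Your route is genuinely different from the paper's. The paper first transfers everything to the model space $\hat{\H}^\Ga$ via $\hat{U}^\Ga$, invokes Proposition~\ref{multbyz} (that $\hat V$ acts as multiplication by $z$ on its initial space), and then computes the single quantity $\hat K_z^* z \hat V^* \hat K_w$ in two ways---once by inserting the projection $P_\infty$ onto $\ker{\hat V}$, once by inserting the projection $P_0$ onto $\ran{\hat V}^\perp$---and equates the results to solve for $\hat K(z,w)$. You instead stay entirely in the original space $\H$, never invoke Proposition~\ref{multbyz}, and reduce directly to an operator identity for $V$ and $\Ga$. Your approach is more self-contained (it does not rely on the prior proposition) and arguably more transparent, since the single identity you isolate is exactly the algebraic content of the theorem; the paper's approach, on the other hand, makes the role of the model-space partial isometry $\hat V$ more visible and connects naturally to the subsequent Theorem~\ref{Glerep2}. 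The underlying algebra is of course the same: your relation $VV^*\Ga(z)=Vz^*\Ga(z)$ is the pullback to $\H$ of the statement that $\hat V$ is multiplication by $z$ on its initial space.
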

\begin{proof}
As before, we omit the superscript $\Ga$ for the proof. Consider the projections:
$$ P_0 := P _{\ran{\hat{V}} ^\perp} =\hat{U} \Ga (0) \Ga (0) ^* \hat{U} ^*, $$  and, $$ P _\infty := P _{\ker{\hat{V}}} = (\hat{U} \otimes I_d) \Ga (\infty ) \Ga (\infty ) ^* ( \hat{U} ^* \otimes I_d ).$$ Then calculate,
\ba \hat{K} _z ^* z \hat{V} ^* \hat{K} _w & = & \left( \hat{K} _w ^* \hat{V} (I - P _\infty ) z^* \hat{K} _z ^* \right) ^* \nn \\
& = & \left( \hat{K} _w ^* w (I - P_\infty ) z^* \hat{K} _z \right) ^*  \nn \\
& = & \hat{K} _z ^* z (I - P _\infty ) w^* \hat{K} _w \nn \\
& = & zw^* \hat{K} (z,w) - \hat{K} _z ^* z P _\infty w^* \hat{K} _w. \nn \ea  In the above we used that $\hat{V}$ acts as multiplication by $z$ on its initial space and $I - P_\infty = P _{\ker{V}} ^\perp$ is the projection onto this initial space.

The same expression can be evaluated differently:
\ba \hat{K} _z ^* z \hat{V} ^* \hat{K} _w & = & \hat{K} _z ^* z \hat{V} ^* (I - P_0) \hat{K} _w \nn \\
& = & \hat{K} _z ^* \hat{V} \hat{V} ^* (I - P_0 ) \hat{K} _w \nn \\
& = & \hat{K} _z ^* (I - P_0 ) \hat{K} _w \quad \quad \quad  (I- P_0) = P _{\ran{\hat{V}}} = \hat{V} \hat{V} ^* \nn \\
& = & \hat{K} (z,w) - \hat{K} _z ^* P_0 \hat{K} _w. \nn \ea Again, in the above we used that $\hat{V}$ acts as multiplication
by $z$ on its initial space, the range of $\hat{V} ^*$.

Equating these two expressions and solving for $\hat{K} (z,w)$ yields:
$$ \hat{K} (z,w) = \frac{ \hat{K} _z ^* P_0 \hat{K} _w - \hat{K} _z ^* z P_\infty w^* \hat{K} _w}{1-zw^* }. $$
Use that $\hat{K} _z = \hat{U} \Ga (z)$, $P_0 = \hat{U} \Ga (0)  \Ga (0) ^* \hat{U} ^*$, $P_\infty = (\hat{U} \otimes I_d) \Ga (\infty ) \Ga (\infty ) ^* (\hat{U} ^* \otimes I_d)$, and that
$$ \hat{K} _z ^* z P_\infty w^* \hat{K} _w = \Ga (z) ^* z \Ga (\infty ) \Ga (\infty ) ^* w^* \Ga (w)$$ to obtain
$$ \hat{K} (z,w) = \frac{ D(z) D(w) ^* - zN(z) N(w) ^* w^*}{1-zw^*}; \quad \quad z,w \in \B^d. $$  In particular
it follows that $b_V = D(z) ^{-1} z N(z) \in  \scr{S} _d ( \J_\infty , \J _0 )$ as claimed (by \cite[Theorem 2.1]{Ball2001-lift}).
\end{proof}

Given an analytic model map $\Ga$ for a CCNC row partial isometry $V$ on $\mc{H}$, let $U ^\Ga : \mc{H} \rightarrow \scr{H} (b_V ^\Ga )$ denote the canonical onto isometry $U ^\Ga := (M^\Ga) ^{-1} \hat{U} ^\Ga$,
where, recall, $(M^\Ga) ^{-1} = M_{D^\Ga } ^{-1} = M_{(D^\Ga ) ^{-1} } =(M^\Ga ) ^*$, since $M^\Ga = M_{D^\Ga}$ is a unitary multiplier of $\scr{H} (b_V ^\Ga )$ onto $\hat{\mc{H}}  ^\Ga$.

\begin{thm} \label{Glerep2}
Let $V : \mc{H} \otimes \C ^d \rightarrow \mc{H}$ be a CCNC row partial isometry with analytic model triple $(\Ga , \J _\infty , \J _0 )$. The image, $X ^\Ga := U ^\Ga V ((U ^\Ga) ^* \otimes I_d)$, of $V$ under the corresponding canonical unitary is an extremal Gleason solution for $\scr{H} (b_V ^\Ga )$.

If $\Ga (z) = \Ga _T (z) := (I - Tz^* ) ^{-1} \Ga (0)$ is an analytic model map corresponding to a contractive extension $T \supseteq V$ on $\mc{H}$, then $X^{\Ga _T} = X (\mbf{b} ^{T} )$  is the unique contractive Gleason solution corresponding to the extremal Gleason solution
$$ \mbf{b} ^T := (U ^{\Ga _T} \otimes I_d) \Ga _T (\infty ), \quad \quad \mbox{for} \ b_V ^{\Ga _T} \in \scr{S} _d (\J _\infty , \J _0 ). $$
\end{thm}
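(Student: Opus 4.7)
My strategy is to transport the required identities from $\scr{H}(b_V^\Ga)$ to the abstract model RKHS $\hat{\H}^\Ga$ via the unitary multiplier $M^\Ga$, since by Proposition \ref{multbyz} the image $\hat{V}^\Ga = \hat{U}^\Ga V((\hat{U}^\Ga)^* \otimes I_d)$ acts transparently there as multiplication by $z$ on its initial space, with range $\{\hat{h} \in \hat{\H}^\Ga : \hat{h}(0) = 0\}$. The relation $M^\Ga k_w^{b_V^\Ga} = \hat{K}_w^\Ga D^\Ga(w)^{-*}$ combined with $D^\Ga(0) = \Ga(0)^*\Ga(0) = I$ yields the normalisations
$$ k_w^{b_V^\Ga} = U^\Ga \Ga(w) D^\Ga(w)^{-*}, \qquad k_0^{b_V^\Ga} = U^\Ga \Ga(0),$$
which I will use throughout. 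The single algebraic fact driving the whole proof is: for $h \in \ra{V-w}^\perp = \ran{\Ga(w)}$, the orthogonality $\langle h, (V-w) V^*V g\rangle = \langle (V^* - w^*) h, V^*V g\rangle = 0$ for all $g \in \H \otimes \C^d$ forces $(V^* - w^*) h \in \ker{V}$, so $VV^* h = V w^* h$ (equivalently $P_{\ran{V}} h = V w^* h$); since $V^* h \in \ker{V}^\perp$ automatically, this also gives $V^* h = P_{\ker{V}^\perp} w^* h$.

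\medskip

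\textbf{Extremality of $X^\Ga$.} Direct inspection of $b_V^\Ga(z) = D^\Ga(z)^{-1} z N^\Ga(z)$ shows $b_V^\Ga(0) = 0$, so $k^{b_V^\Ga}(0,0) = I$ and $I - k_0^{b_V^\Ga} (k_0^{b_V^\Ga})^*$ is the orthogonal projection onto $\{f \in \scr{H}(b_V^\Ga) : f(0) = 0\}$. By the conjugacy $\hat{V}^\Ga = M^\Ga X^\Ga ((M^\Ga)^{-1} \otimes I_d)$ and the fact that evaluation at $0$ intertwines with $M^\Ga$ (since $D^\Ga(0) = I$), the range of $X^\Ga$ transports to exactly this subspace. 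Hence $X^\Ga(X^\Ga)^* = I - k_0^{b_V^\Ga}(k_0^{b_V^\Ga})^*$, giving both contractivity and extremality.

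\medskip

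\textbf{Gleason equation.} By (\ref{Gleker}), it suffices to show $(I - X^\Ga z^*) k_z^{b_V^\Ga} = k_0^{b_V^\Ga}$. A componentwise check gives $X^\Ga z^* = U^\Ga V z^* (U^\Ga)^*$ on $\scr{H}(b_V^\Ga)$, so the equation reduces (after left-multiplying by $(U^\Ga)^*$ and right-multiplying by $D^\Ga(z)^*$) to
$$ (I - V z^*) \Ga(z) = \Ga(0) D^\Ga(z)^* = \Ga(0)\Ga(0)^* \Ga(z) = P_{\ran{V}^\perp} \Ga(z). $$
For $h = \Ga(z) c$, the algebraic fact above yields $(I - V z^*) h = (I - V V^*) h = P_{\ran{V}^\perp} h$, completing the verification.

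\medskip

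\textbf{Identification when $\Ga = \Ga_T$.} I first check that $\mbf{b}^T := (U^{\Ga_T} \otimes I_d) \Ga_T(\infty)$ is an extremal contractive Gleason solution for $b_V^{\Ga_T}$. Componentwise evaluation using $(U^{\Ga_T} h)(z) = D^{\Ga_T}(z)^{-1} \Ga_T(z)^* h$ gives $\mbf{b}^T(z) = (D^{\Ga_T}(z)^{-1} \otimes I_d) N^{\Ga_T}(z)$, whence $z \cdot \mbf{b}^T(z) = D^{\Ga_T}(z)^{-1} z N^{\Ga_T}(z) = b_V^{\Ga_T}(z) - b_V^{\Ga_T}(0)$; and $(\mbf{b}^T)^* \mbf{b}^T = \Ga_T(\infty)^* \Ga_T(\infty) = I = I - b_V^{\Ga_T}(0)^* b_V^{\Ga_T}(0)$ since $U^{\Ga_T}$ and $\Ga_T(\infty)$ are onto isometries. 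To establish $X(\mbf{b}^T) = X^{\Ga_T}$, I verify the defining relation (\ref{GSHbb}). Using $w^* U^{\Ga_T} = (U^{\Ga_T} \otimes I_d) w^*$, $\Ga_T(\infty)\Ga_T(\infty)^* = P_{\ker{V}}$, and $(\Ga_T(w) \otimes I_d) w^* = w^* \Ga_T(w)$, the right-hand side of (\ref{GSHbb}) collapses to $(U^{\Ga_T} \otimes I_d) P_{\ker{V}^\perp} w^* \Ga_T(w) D^{\Ga_T}(w)^{-*}$, which matches the left-hand side $(U^{\Ga_T} \otimes I_d) V^* \Ga_T(w) D^{\Ga_T}(w)^{-*}$ by the algebraic fact $V^* h = P_{\ker{V}^\perp} w^* h$ on $\ra{V-w}^\perp$. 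The only real obstacle throughout is organisational -- keeping track of the Hilbert spaces $\H, \hat{\H}^\Ga, \scr{H}(b_V^\Ga)$ and the row/column conventions for $z^*$ -- since conceptually the whole proof is driven by the single identity $(V^* - w^*) h \in \ker{V}$ for $h \in \ra{V-w}^\perp$.
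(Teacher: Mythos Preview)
Your proof is correct and takes a genuinely different route from the paper. Both proofs handle extremality and the verification that $\mbf{b}^T$ is an extremal Gleason solution for $b_V^\Ga$ in essentially the same way. The differences are in the Gleason equation for $X^\Ga$ and in the identification $X^{\Ga}=X(\mbf{b}^\Ga)$.

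For the Gleason equation, the paper transports the ``multiplication by $z$'' property of $\hat{V}^\Ga$ through the unitary multiplier $M^\Ga$ to $X^\Ga$ and then reads off $z(X^*f)(z)=(XX^*f)(z)=f(z)-f(0)$ directly. You instead verify the equivalent kernel identity $(I-X^\Ga z^*)k_z^{b_V^\Ga}=k_0^{b_V^\Ga}$ by pulling back to $\H$ and invoking the single algebraic fact $(V^*-w^*)h\in\ker{V}$ for $h\in\ra{V-w}^\perp$, which immediately gives $(I-Vw^*)h=P_{\ran{V}^\perp}h$.

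For the identification $X^{\Ga}=X(\mbf{b}^{\Ga})$, the paper's computation genuinely uses the specific form $\Ga=\Ga_T$: it expands $T=V-C$, applies $\Ga_T(w)-\Ga_T(0)=Tw^*\Ga_T(w)$, and exploits $\ran{C}\subseteq\ran{V}^\perp$ to kill the cross term. Your argument instead uses only $\ran{\Ga(w)}=\ra{V-w}^\perp$ together with the same algebraic fact in the form $V^*h=P_{\ker{V}^\perp}w^*h$. This is shorter and, notably, establishes $X^\Ga=X(\mbf{b}^\Ga)$ for \emph{any} analytic model triple, not just those of the form $\Ga_T$ --- so you are proving slightly more than the theorem states. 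What the paper's approach buys is that it makes the role of the contractive extension $T$ explicit and connects directly to the machinery of Lemma~\ref{RCdecomp}; what yours buys is economy and a unified mechanism (one identity drives everything).
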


\begin{proof}
(As before we will omit superscripts.) Since multiplication by $D(z) ^{-1}$ is an isometric multiplier of $\hat{\mc{H}}$ onto $\scr{H} (b_V)$, $M_{D^{-1}} = M_D ^*$, Proposition \ref{multbyz} implies that $X   := M_D ^* \hat{V} M_D$ acts as multiplication by $z$ on its initial space, and
$$ \ran{X} = \{ f \in \scr{H} (b_V) | \ f(0) = 0 \}. $$

Hence, $X X ^* = I - k_0 ^b (k_0 ^b) ^*$ and
\ba z (X ^* f ) (z) & = & (X X^* f ) (z) \nn \\
& = & f(z) - k^{b_V} (z,0 ) f(0) \nn \\
& = & f (z) - f(0), \nn \ea since $b _V ^\Ga (0) =0$. This proves that $X$ is a (contractive) extremal Gleason solution for $\scr{H}(b_V ^\Ga)$.

To see that $\mbf{b}  := (M_{D ^{-1} } \hat{U} \otimes I_d)  \Ga (\infty )$ is a contractive extremal Gleason solution for $b_V ^\Ga$, let $b := b_V ^\Ga$ and calculate:
\ba z \mbf{b}  (z) & =& ( z^* k_z ^b ) ^* \mbf{b}  \nn \\
& = & \left( z^* \hat{U}  ^* (M_{D^{-1} } ^* ) k_z ^b \right) ^* \Ga (\infty ) \nn \\
& = & \left( z^* \hat{U}  ^* \hat{K}_z  (D (z) ^{-1})  ^* \right) ^* \Ga (\infty ) \nn \\
&= & \left( z^* \Ga (z) (D  (z) ^{-1} ) ^* \right) ^* \Ga (\infty ) \nn \\
& = & D (z) ^{-1} z (\Ga (z) ^* \otimes I_d ) \Ga (\infty ) \nn \\
& = & D (z) ^{-1} z N (z) = b(z). \nn \ea This proves that $\mbf{b} ^\Ga =: \mbf{b}$ is a Gleason solution (since $b_V ^\Ga (0) = 0$). Furthermore,
$$ \mbf{b}  ^* \mbf{b}  = \Ga (\infty ) ^* \Ga (\infty) = I_{\J _\infty }, $$ so that $\mbf{b} ^\Ga = \mbf{b}$ is contractive and extremal.

To see that $X = X^{\Ga _T} = X (\mbf{b} ^{\Ga _T})$, calculate the action of $X^* -w^*$ on point evaluation maps,
\ba (X^* -w^* ) k_w ^b & = & \left(( M_D ^{-1} \hat{U}  \otimes I_d ) (V^* -w^* ) \hat{U}  ^* M_D \right) k_w ^b \nn \\
& = & (M_D ^{-1} \hat{U}  \otimes I_d ) (V^* -w^*) \Ga (w) (D (w) ^{-1} ) ^*, \nn \ea and compare this to
\ba \mbf{b} ^\Ga b(w) ^* & = & (M_D ^{-1} \hat{U}  \otimes I_d) \Ga (\infty) N  (w) ^* w^* (D  (w) ^{-1} ) ^* \nn \\
& = & (M_D ^{-1} \hat{U}  \otimes I _d) \Ga (\infty) \Ga (\infty) ^* (\Ga (w) \otimes I_d)  w^* (D  (w) ^{-1} ) ^* \nn \\
& = & (M_D ^{-1} \hat{U}  \otimes I _d) P _{\ker{V} } w^* \Ga (w) (D  (w) ^{-1} ) ^* \nn \\
& = & (M_D ^{-1} \hat{U}  \otimes I _d) (I - V^* V) w^* \Ga (w) (D  (w) ^{-1} ) ^*. \nn \ea Under the assumption that
$\Ga = \Ga _T $ for a contractive extension $T \supseteq V$ on $\mc{H}$, recall that by Lemma \ref{RCdecomp}, $T = V -C$, where $C$ is a pure row contraction
with $\ker{C} ^\perp \subseteq \ker{V}$ and $\ran{C} \subseteq \ran{V} ^\perp$. Applying that $\Ga _T (z) = (I -Tz^* ) ^{-1} \Ga _T (0)$,
\ba \mbf{b}  b(w) ^* & = & (M_D ^{-1} \hat{U} \otimes I _d) (I - V^* (T+C)) w^* \Ga _T  (w) (D  (w) ^{-1} ) ^* \nn \\
& = &  (M_D ^{-1} \hat{U} \otimes I _d) w^* \Ga _T  (w) (D  (w) ^{-1} ) ^*   \nn \\
& & - (M_D ^{-1} \hat{U}  \otimes I _d) V^* ( C w^* \Ga _T (w) + \Ga _T (w) - \Ga _T (0) ) (D  (w) ^{-1} ) ^* \nn \\
& = & (M_D ^{-1} \hat{U}  \otimes I _d) (w^* - V^*) \Ga _T  (w) (D  (w) ^{-1} ) ^*, \nn \ea
and it follows that $X ^* k_w ^b = w^* k_w ^b - \mbf{b} ^{\Ga _T} b^{\Ga _T} _V (w) ^*$, proving the claim.
\end{proof}

\begin{remark} \label{qmodelmaps}
Lemma \ref{modmapinv} shows that the coincidence class of any characteristic function $b_V ^\ga$ of a CCNC row partial isometry $V$ is invariant under
the choice of model triple $(\ga , \J _\infty , \J _0)$ and Theorem \ref{Glerep} shows that $b_V ^\ga \in \scr{S} _d (\J _\infty , \J _0)$ belongs to the Schur class. It will also be useful to define \emph{weak coincidence} of Schur-class functions as in \cite[Definition 2.4]{BES2006cnc}:

\begin{defn}
    The \emph{support} of $b \in \scr{S} _d (\mc{H}  , \J )$ is
    $$ \mr{supp} (b) := \bigvee _{z \in \B ^d } \ran{b(z) ^*} = \mc{H}  \ominus \bigcap _{z \in \B ^d} \ker{b(z)}. $$
Schur-class multipliers $b_1 \in \scr{S} _d ( \mc{H}  , \J )$ and $b_2 \in \scr{S} _d (\mc{H}  ' , \J ' )$ \emph{coincide weakly} if $b_1 ' := b_1 | _{\mr{supp} (b _1 ) } $ coincides with $b_2 ' := b_2 | _{\mr{supp} (b_2 )}$.
\end{defn}

By \cite[Lemma 2.5]{BES2006cnc}, $b_1 \in \scr{S} _d (\mc{H}  , \J )$ and $b_2 \in \scr{S} _d (\mc{H}  ' , \J ' )$ coincide weakly if and only if there is an onto isometry $V : \J \rightarrow \J ' $ so that
$$ V b_1 (z) b _1 (w) ^* V ^* = b_2 (z) b_2 (w) ^* ; \quad \quad z,w \in \B ^d ,$$ \emph{i.e.}, if and only if $\scr{H} (b_2 ) = \scr{H} (V b_1 )$.

Weak coincidence also defines an equivalence relation on Schur-class functions, and we define:
\begin{defn} \label{Livsicdef}
    The \emph{characteristic function}, $b_V$, of a CCNC row partial isometry $V$ is the weak coincidence class
of any Schur-class characteristic function $b_V ^\ga \in \scr{S} _d (\J _0 , \J _\infty )$ constructed using any model triple $(\ga , \J _0 , \J _\infty )$ for $V$. We will often abuse terminology and simply say that any $b_V ^\ga$ is the characteristic function, $b_V$, of $V$.
\end{defn}
Note that the characteristic function, $b_V$, of any CCNC row partial isometry always vanishes at $0$, $b_V (0) =0$, and (as discussed in Subsection \ref{HerglotzSection}) this implies that $b_V$ is strictly contractive on $\B ^d$. In the single variable case when $d=1$, the above definition of characteristic function reduces to that of the \emph{Liv\v{s}ic characteristic function} of the partial isometry $V$ \cite{Livsic,AMR,GMR}.
\end{remark}

\begin{remark}
There is an alternative proof of Theorem \ref{Glerep} using the colligation or transfer function theory of \cite{Ball2001-lift,Ball2007trans,Ball2010}.
Any model triple for a CCNC row partial isometry, $V$, provides a unitary colligation and transfer-function realization for $b_V$:
\begin{lemma}{ (\cite[Lemma 3.7]{Manana})}
Let $V$ be a CCNC row partial isometry on $\mc{H}$, and let $(\ga , \J _\infty , \J _0)$ be any model triple for $V$. Then
$$ \Xi ^\ga := \bbm V^* & \ga (\infty ) \\ \ga (0 ) ^* & 0 \ebm : \bbm \mc{H} \\ \J _\infty \ebm \rightarrow \bbm \mc{H} \otimes \C ^d \\ \J _0 \ebm, $$
is a unitary (onto isometry) colligation with transfer function equal to $b_V ^\ga$.
\end{lemma}
In \cite[Theorem 3.14]{Manana}, the above lemma was applied to provide an alternate proof of Theorem \ref{Glerep}.
\end{remark}

\begin{lemma} \label{Gleasoncharfun}
Let $b \in \scr{S} _d (\J , \K )$ be any Schur-class function such that $b(0) = 0$. The characteristic function, $b_X$, of any extremal
Gleason solution, $X$, for $\scr{H} (b)$ coincides weakly with $b$.
\end{lemma}

\begin{proof}
By Remark \ref{GSrowpi} and Lemma \ref{GSCCNC}, any such $X$ is a CCNC row partial isometry.

To calculate the characteristic function of $X$, use the analytic model map $\Ga = \Ga _X$, with $\Ga (0) := k_0 ^b$, an isometry of $\K $ onto $\ran{X} ^\perp$
since $b(0) = 0$. Then,
\be  \Ga (z) = (I -X z^* ) ^{-1} k_0 ^b = k_z ^b, \label{Gaker} \ee and
$$ D(z) = \Ga (z) ^* \Ga (0) = (k_z ^b) ^* k_0 ^b = k^b (z,0) = I _{\K}. $$
It then follows that $zN(z) = b_X (z)$, and by equation (\ref{Gaker}) and Theorem \ref{Glerep}, the reproducing kernel for the model space $\widehat{\scr{H} (b)}$ is
$$ \hat{K} (z,w) = \Ga (z) ^* \Ga (w) = k^b (z,w) = k^{b_X} (z,w); \quad \quad z,w \in \B ^d. $$
This proves that $\scr{H} (b_X ^\Ga) = \scr{H} (b)$ and \cite[Lemma 2.5]{BES2006cnc} implies that $b_X ^\Ga$ coincides weakly with $b$.
\end{proof}

\begin{cor} \label{Gmultbyz}
    If $X$ is an extremal Gleason solution for any Schur-class $b$ such that $b(0) =0$ then $X$ acts as multiplication by $z = (z_1 , ..., z_d)$ on its initial space.
\end{cor}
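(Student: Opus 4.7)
The plan is to combine Example \ref{Gleasoncharfun} with Proposition \ref{multbyz}: the former shows that for an extremal Gleason solution $X$ of $\scr{H}(b)$ with $b(0)=0$ there is a canonical analytic model map that trivializes the model isometry, and the latter states that the image of $V$ under any such isometry acts as multiplication by $z$ on its initial space.

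First, I take the analytic model map $\Ga(z) := (I - Xz^*)^{-1} k_0^b$ used in Example \ref{Gleasoncharfun}. Since $X$ is an extremal contractive Gleason solution and $b(0)=0$, Remark \ref{GSrowpi} gives that $k_0^b$ is an isometry of $\K$ onto $\ran{X}^\perp$, so $(\Ga, \K, \K)$ is a legitimate analytic model triple with $\Ga(\infty) := k_0^b$ and $\Ga(z) = k_z^b$ by equation (\ref{Gleker}). Consequently $D^\Ga(z) = (k_z^b)^* k_0^b = k^b(z,0) = I_\K$, so the multiplier $M^\Ga = M_{D^\Ga}$ in Theorem \ref{Glerep} is the identity operator, and the canonical unitary of Theorem \ref{Glerep2} is
\[
 U^\Ga = (M^\Ga)^{-1}\hat U^\Ga = \hat U^\Ga.
\]

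Next, I observe that $\hat U^\Ga$ itself acts as the identity under the natural identification of $\scr{H}(b)$ with $\hat{\scr{H}}^\Ga$. Indeed, for $h \in \scr{H}(b)$,
\[
 (\hat U^\Ga h)(z) = \Ga(z)^* h = (k_z^b)^* h = h(z),
\]
so $\hat U^\Ga$ sends the function $h$ to itself. Moreover Example \ref{Gleasoncharfun} already verifies that the reproducing kernel $\hat K^\Ga$ coincides with $k^b$, so the identification $\hat{\scr{H}}^\Ga = \scr{H}(b)$ is really an equality of RKHS, not merely a unitary equivalence.

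Now I invoke Proposition \ref{multbyz} applied to the CCNC row partial isometry $V := X$ with this analytic model map $\Ga$: its image $\hat V^\Ga = \hat U^\Ga X (\hat U^\Ga \otimes I_d)^*$ acts as multiplication by $z = (z_1,\ldots,z_d)$ on its initial space in $\hat{\scr{H}}^\Ga \otimes \C^d$. Because $\hat U^\Ga$ is the identity under the identification above, we have $\hat V^\Ga = X$ as row operators on $\scr{H}(b) \otimes \C^d \to \scr{H}(b)$. Hence $X$ itself acts as multiplication by $z$ on its initial space, which is the claim. No step here should present serious difficulty; the only thing to be careful about is bookkeeping the identification $\hat{\scr{H}}^\Ga = \scr{H}(b)$ and checking that $\hat U^\Ga$ is truly the identity (and not just a unitary) once one uses $\Ga(z) = k_z^b$.
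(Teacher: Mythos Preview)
Your proof is correct and follows essentially the same route as the paper: choose the analytic model map $\Ga(z)=(I-Xz^*)^{-1}k_0^b=k_z^b$ from Example \ref{Gleasoncharfun}, observe that this makes $\hat U^\Ga$ the identity on $\scr{H}(b)$, and then apply Proposition \ref{multbyz}. One small slip: your assignment $\Ga(\infty):=k_0^b$ is not right (the map $\Ga(\infty)$ must be an isometry onto $\ker{X}$, not onto $\ran{X}^\perp$), but this is harmless since neither your argument nor Proposition \ref{multbyz} uses $\Ga(\infty)$.
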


\begin{proof}
If we take $\Ga = \Ga _X$ as our model map for $X$, the above example shows that $\widehat{\scr{H} (b)} ^\Ga = \scr{H} (b)$ and $X = \hat{X}$, so that
$X$ acts as multiplication by $z$ on its initial space by Proposition \ref{multbyz}.
\end{proof}

\begin{remark} \label{Glemark}
Theorem \ref{Glerep} and Lemma \ref{Gleasoncharfun} show that the (weak coincidence class of any) characteristic function is a unitary invariant: If $X, Y$ are two CCNC row partial isometries which are unitarily equivalent then one can choose model triples for $X, Y$ so that they have the same characteristic function. Conversely if the characteristic functions of $X, Y$ coincide weakly, $\scr{H} (b_X )= \scr{H} (U b_Y )$ for a fixed unitary $U$, then both $X, Y$ are unitarily equivalent to extremal Gleason solutions for $\scr{H} (b_X)$ (it is easy to see that multiplication by $U$ is a constant
        unitary multiplier of $\scr{H} (b_X )$ onto $\scr{H} (U b_X)$ taking extremal Gleason solutions onto extremal Gleason solutions).

 Unless $d=1$, the characteristic function of a CCNC row partial isometry is not a complete unitary invariant.
Indeed, if $b \in \scr{S} _d (\J , \K)$, $b(0) =0$, has two extremal Gleason solutions $X, X'$ which are not unitarily equivalent,
the above results show that $X , X'$ are two non-equivalent row partial isometries with the same characteristic function. The subsection below provides examples of such Schur-class functions.
\end{remark}

\subsection{The characteristic function is not a complete unitary invariant} \label{Gleg}

Let $b \in \scr{S} _d (\mc{H} )$ be any square Schur-class function with $b(0)=0$ (for simplicity) and $\mr{supp} (b) = \mc{H}$. Recall that since $b(0)$ is a strict contraction, so is $b(z)$ for any $z \in \B ^d$.

\begin{lemma} \label{neqcoext}
   Suppose that $b \in \scr{S} _d (\mc{H})$ obeys $b(0) =0$, and that $\scr{H} (b)$ has two extremal Gleason solutions $X, Y $ which are unitarily equivalent, $WX = Y W$, for some unitary $W \in \scr{L} (\scr{H} (b) )$.
Then $W$ is a constant unitary multiplier: There is a constant unitary $R \in \scr{L} (\mc{H})$ so that $W k_z ^b = k_z ^b R$, $(WF) (z) = R^* F(z)$ for all $F \in \scr{H} (b)$ and $z \in \B ^d$. Moreover, given any $z,w \in \B^d$, $R b(z) b(w)^* = b(z) b(w) ^* R$.
\end{lemma}

\begin{proof}
Suppose that $X, Y$ are unitarily equivalent, $WX = Y X$ for some unitary $W : \scr{H} (b) \rightarrow \scr{H} (b)$. Since both $X, Y$ are extremal and $b(0) =0$,
they are both row partial isometries with the same range projection $XX ^* = I - k_0 ^b (k_0 ^b ) ^* = Y Y^*$. It follows that the range of $k_0 ^b$ is a reducing subspace for $W$ so that there
is a unitary $R \in \scr{L} (\mc{H} )$ such that
$$ W k_0 ^b = k_0 ^b R. $$

Moreover, by the Property (\ref{Gleker}), given any $z \in \B ^d$,
\ba W k_z ^b & = & W (I -X z^* ) ^{-1} k _0 ^b \nn \\
& = & (I -Y z^* ) ^{-1} W k_0 ^b \nn \\
& = & (I -Y z^* ) ^{-1} k_0 ^b R = k_z ^b R, \nn \ea so that $W $ is multiplication by the fixed unitary operator $R^* \in \scr{L} (\mc{H} )$. Since $W$ is a unitary multiplier of $\scr{H} (b)$ onto itself, RKHS theory implies that 
$$ k^b (z,w) = R^* k^b (z,w) R; \quad \quad z, w \in \B ^d, $$ and the final claim follows.
\end{proof}

\begin{cor} \label{ncuni}
    If $b \in \scr{S} _d (\C)$ satisfies $b(0) = 0$ and  $\dim{\ker{V^b}} > \dim{\ran{V^b} ^\perp} \neq 0$, then
$\scr{H} (b)$ has two extremal Gleason solutions which are not unitarily equivalent.
\end{cor}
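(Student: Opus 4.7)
The plan is to leverage Claim \ref{neqcoext} together with the scalar structure $\H = \C$ to reduce the problem to exhibiting two distinct extremal contractive Gleason solutions, and then to construct such distinct solutions via the bijection of Example \ref{Gleg} from co-isometric extensions of the Herglotz row partial isometry $V^b$.

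First I would observe that because $b \in \scr{S}_d(\C)$, the space $\H = \C$ is one-dimensional, so any unitary $R \in \scr{L}(\H)$ is a unimodular scalar $\mu \in \T$. Consequently, if $X, Y$ are any two extremal contractive Gleason solutions for $\scr{H}(b)$ that are unitarily equivalent, Claim \ref{neqcoext} forces the intertwining unitary $W$ to be the scalar multiplier $\bar\mu I$ on $\scr{H}(b)$, and then $W X = Y W$ collapses to $X = Y$. Thus any two \emph{distinct} extremal Gleason solutions for $\scr{H}(b)$ are automatically \emph{not} unitarily equivalent, and the corollary reduces to producing two distinct extremal Gleason solutions.

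Next I would invoke Example \ref{Gleg}, which identifies extremal contractive Gleason solutions for $\scr{H}(b)$ (under $b(0)=0$ and $\mr{supp}(b) = \C$, so that the surjection $\mbf{b} \mapsto X(\mbf{b})$ is injective) with co-isometric extensions $D$ of $V^b$ on $\scr{H}^+(H_b)$ through the bijection $D \mapsto X[D]$. By Lemma \ref{contractext}, such an extension is of the form $D = V^b - C$ with $C : \scr{H}^+(H_b) \otimes \C^d \rightarrow \scr{H}^+(H_b)$ a contraction satisfying $\ker{C}^\perp \subseteq \ker{V^b}$ and $\ran{C} \subseteq \ran{V^b}^\perp$. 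These orthogonality relations imply $V^b C^* = 0 = C V^{b*}$, so $DD^* = V^b V^{b*} + C C^* = P_{\ran{V^b}} + CC^*$. The co-isometry condition $DD^*=I$ is therefore equivalent to $C^*$ acting on $\ran{V^b}^\perp$ as an isometry into $\ker{V^b}$ (and vanishing on $\ran{V^b}$).

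Co-isometric extensions of $V^b$ are thus parameterized by isometries $\iota : \ran{V^b}^\perp \hookrightarrow \ker{V^b}$, i.e.~by points of the Stiefel manifold $V_n(\ker{V^b})$ with $n := \dim \ran{V^b}^\perp$. Under the hypothesis $\dim \ker{V^b} > n \geq 1$, this manifold has positive dimension (it equals $U(\dim \ker V^b)/U(\dim \ker V^b - n)$), and in particular contains at least two distinct elements. Choosing two such distinct isometries produces distinct $C_1, C_2$, hence distinct co-isometric extensions $D_1, D_2$, and by the bijection of Example \ref{Gleg} distinct extremal Gleason solutions $X[D_1], X[D_2]$; by the first paragraph these cannot be unitarily equivalent. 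The main obstacle I expect is purely bookkeeping: verifying that distinct $D$'s indeed give distinct $\mbf{b}[D]$'s and hence distinct $X[D]$'s, which rests on the injectivity assertion in Example \ref{Gleg} (valid since $\mr{supp}(b) = \C$); apart from this, all steps are direct applications of the earlier machinery.
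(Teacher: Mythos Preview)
Your proof is correct and follows essentially the same strategy as the paper: use Claim~\ref{neqcoext} together with the scalar structure to reduce unitary equivalence to equality, then exploit the bijection between co-isometric extensions of $V^b$ and extremal Gleason solutions from Example~\ref{Gleg}. The only difference is organizational: the paper constructs two co-isometric extensions $D,d$ with the specific property $D^* K_0^b \perp d^* K_0^b$ and argues by contradiction, whereas you first establish in general that distinct extremal solutions are automatically non-unitarily-equivalent, and then produce distinct co-isometric extensions via the Stiefel-manifold parameterization of isometries $\ran{V^b}^\perp \hookrightarrow \ker{V^b}$; your route is arguably cleaner since it avoids verifying that the orthogonality $D^* K_0^b \perp d^* K_0^b$ is actually achievable.
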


\begin{proof}
In this case $\mc{H} = \C$ the dimension of $\ran{V^b} ^\perp$ is either $0$ or $1$.  By assumption $\dim{\ker{V^b} } > 1 = \dim{\ran{V^b}}$ so that
we can define two co-isometric extensions $D,d$ of $V^b$ on $\scr{H} ^+ (H_b)$ so that
$$ D^* K_0 ^b \perp d^* K_0 ^b. $$ As described in Subsection \ref{extGSsect} and Subsection \ref{Gleasonsub}, the contractive Gleason solutions $X[D]$ and $X[d]$ are extremal.

Suppose that $X[D], X[d]$ are unitarily equivalent, $WX[d] = X[D] W$ for a unitary $W$ on $\scr{H} (b)$. By the previous Lemma, $W$ is a constant unitary multiplier by a unitary operator $R \in \scr{L} (\C )$. That is,
$W$ is simply multiplication by a unimodular constant $R = \alpha \in \bm{T}$, $W = \alpha I _{\scr{H} (b)}$.  In particular, $X[D] = WX[d] W^* = X[d]$ so that $\mbf{b} [D] b(w) ^* = \mbf{b}[d] b(w) ^* $ and $\mbf{b} [D] = \mbf{b} [d]$. Equivalently,
$$ D^* K_0 ^b = d^* K_0 ^b, \quad \quad \mbox{a contradiction.}$$
\end{proof}

\begin{eg} \label{notcompeg}
To construct examples of a Schur-class functions $b \in \scr{S} _d (\C)$ which admit non-unitarily equivalent extremal Gleason solutions, let $b \in \scr{S} _d (\C)$, $d\geq 2$ be any Schur-class function obeying
$b(0) =0$ and such that $V^b$ is not a co-isometry. Such a Schur-class function is called non quasi-extreme (see Section \ref{QEsect}), and it is not difficult to apply Theorem \ref{equivqe} below to show, for example, that if $b \in \scr{S} _d (\C)$ is any Schur-class function and $0<r<1$,  $r b \in \scr{S} _d (\C)$ is non quasi-extreme. Moreover, $\scr{H} (rb)$ is infinite dimensional (this is just $H^2 _d$ with a new norm since $\scr{H} (rb) = \ran{D_{M_{rb}^*}}$ equipped with the norm that makes $D_{M_{rb}^*}$ a co-isometry onto its range as in \cite{Sarason-dB}) so that $rb$ satisfies the assumptions of Propostion \ref{Extprop2}, and hence those of Corollary \ref{ncuni}.
\end{eg}

\begin{cor} \label{notcomplete}
The characteristic function, $b_X$, of a CCNC row partial isometry, $X$, is not a complete unitary invariant: There exist CCNC row partial isometries
$X_1, X_2$ which are not unitarily equivalent and yet satisfy $b_{X_1} = b_{X_2}$.
\end{cor}
This is in contrast to the result of \cite[Theorem 3.6]{BES2006cnc}, which shows that the Sz.-Nagy-Foia\c{s} characteristic function of any CNC $d$-contraction (which is a CCNC row contraction by Corollary \ref{CCNCcor}) is a complete unitary invariant for CNC $d$-contractions. We will also later prove in Proposition \ref{NFprop}, that our characteristic function, $b_T$, of any CCNC row contraction coincides weakly with the Sz.-Nagy-Foia\c{s} characteristic function of $T$.
\begin{proof}
Let $b \in \scr{S} _d = \scr{S} _d (\C )$ be any Schur-class function satisfying the assumptions of Corollary \ref{ncuni} (as in the above example). Then there exist extremal Gleason solutions, $X_1, X_2$ for $\scr{H} (b)$ which are not unitarily equivalent. Lemma \ref{Gleasoncharfun} implies that $b_{X_1} = b_{X_2}$ proving the claim.
\end{proof}

We conclude this section with a calculation that will motivate an approach to extending our commutative de Branges-Rovnyak model for CCNC row partial isometries to arbitrary CCNC row contractions:

\subsection{Frostman Shifts} \label{FSeg}
Suppose that $X$ is any extremal Gleason solution for $\scr{H} (b)$, where $b \in \scr{S} _d (\J , \K)$ is purely contractive on the ball, but we do not assume that $b(0) =0$. Recall that $b$ is purely
contractive on the ball if and only if for any $z \in \B ^d$, $\| b (z) g \| < \| g \|$ for all $g \in \J$, \emph{i.e.} $b(z)$ is a pure contraction.  It is further not difficult to show, as in \cite[Proposition V.2.1]{NF}, that if $b(z)$ is a pure contraction for $z\in \B ^d$, then so is $b(z) ^*$. In particular, both defect operators $D_{b(0)}, D_{b (0) ^* }$ are injective and have dense ranges, \emph{i.e.} they are quasi-affinities. In the case where $b(0) \neq 0$, $X$ is not a row partial isometry, but it is still CCNC by Lemma \ref{GSCCNC} so that we can consider the isometric-purely contractive decomposition $X = V -C$ of $X$, and we calculate the characteristic function, $b_V$, of the CCNC partial isometric part, $V$, of $X$.

By Lemma \ref{RCdecomp}, $V^* = X^* P_{\ker{D_{X^*}}}$, and since $X$ is extremal,
$$ D _{X^* } ^2 = I - X X^* = k_0 ^b (k_0 ^b) ^*. $$ It follows that
$$ \ker{D_{X^*} } = \ker{D_{X^*} ^2 } = \left( \bigvee k_0 ^b \K \right) ^\perp, $$ so that if $b(0)$ is a strict contraction,
$$ V^* = X^* P _{\ker{D_{X^*}}} =  X^* \left( I - k_0 ^b k^b (0, 0 ) ^{-1} (k_0 ^b) ^* \right). $$
If $b(0)$ is a strict contraction, we can then define the onto isometry $\Ga (0) : \K \rightarrow \ran{V} ^\perp$ by $\Ga (0) := k_0 ^b D_{b(0) ^*} ^{-1}$. If $b(0)$ is not a strict contraction, one can define $ \Ga (0) : \ran{D_{b(0) ^*}} \rightarrow \ran{V} ^\perp$ by:
$$ \Ga (0) D_{b(0) ^* } g := k_0 ^b g. $$ It is easy to check that this is a densely defined isometry with dense range in $\ran{V} ^\perp$, and this extends by continuity to an isometry of $\K$ onto $\ran{V} ^\perp$.

Similarly, since $X$ is an extremal Gleason solution for $\scr{H} (b)$, as described in Section \ref{Gleasonsub}, there is an extremal Gleason solution
$\mbf{b}$ for $b$ so that $X^* k_w ^b = w^* k_w ^b - \mbf{b} b(w) ^*$. Since $\mbf{b}$ is extremal, $\mbf{b} ^* \mbf{b} = I - b(0) ^* b(0) = D_{b(0)} ^2$,
so that if $b(0)$ is a strict contraction, $\mbf{b} D_{b(0)} ^{-1}$ also defines an isometry.

\begin{lemma} \label{kerViso}
     Define $\Ga _0 (\infty )$ on $\ran{D_{b(0)}}$ by $\Ga _0 (\infty ) D_{b(0)} h = \mbf{b} h$. Then $\Ga _0 (\infty )$ extends by continuity to an
isometry of $\J$ into $\ker{V}$.
\end{lemma}
If $b(0)$ is a strict contraction then $\Ga _0 (\infty ) = \mbf{b} D_{b(0)} ^{-1}$ and the proof simplifies.
\begin{proof}
Showing that $\Ga _0 (\infty )$ extends to an isometry is straightforward. To show that $\Ga _0 (\infty )$ maps into
$\ker{V}$, use that $V^* = X^* (I - \Ga (0) \Ga (0) ^* )$ since $\Ga (0)$ is an isometry onto $\ran{V} ^\perp$, and observe that
$$ k^b (w, 0) = (k_w ^b) ^* k_0 ^b = (k_w ^b) ^* \Ga (0) \Ga (0) ^* k_0 ^b. $$ It follows that for any $h \in \K$ and $w \in \B ^d$, since $\Ga (0)$ is onto $\bigvee k_0 ^b \K$, there is a $g \in \K$ so that $ \Ga (0) \Ga (0) ^* k_w ^b h = k_0 ^b g$, or, equivalently,
$$ k^b (0,w ) h = (k_0 ^b ) ^* \Ga (0) \Ga (0) ^* k_w ^b h = k^b (0, 0) g. $$ Then, calculating on kernel maps,
\ba D_{b(0)} \Ga _0 (\infty ) ^* V^* k_w ^b h & = & \mbf{b} ^* X^*(I - \Ga (0) \Ga (0) ^* ) k_w ^b h \nn \\
& = & \mbf{b} ^* ( w^* k_w ^b - \mbf{b} b(w) ^* ) h - \mbf{b} ^* X^* k_0 ^b g \nn \\
& = & \left( b(w) ^* - b(0) ^* - (I - b(0) ^* b(0) ) b(w ) ^* \right) h  + D_{b(0)} ^2 b(0) ^* g \nn \\
& = &  (-b(0) ^* + b(0) ^* b(0) b(w) ^* ) h + b(0) ^* k^b (0, 0) g \nn \\
& = & -b(0) ^* h + b(0) ^* b(0) b(w) ^* h + b(0) ^* k^b (0, w) h = 0. \nn \ea
\end{proof}

It follows that we can choose an isometry $\Ga _1 (\infty ) : \J ' \rightarrow \ker{V} \ominus \ran{\Ga _0 (\infty)}$ so that
$$ \Ga (\infty) := \Ga _0 (\infty) \oplus \Ga _1 (\infty) : \J \oplus \J ' \rightarrow \ker{V}, $$ is an onto isometry.

\begin{lemma} \label{vanish}
The range of $\Ga _1 (\infty)$ is in the kernel of $z (k_z ^b ) ^*$  for any $z \in \B ^d$. That is, $z(\Ga (z) ^* \otimes I_d ) \Ga _1 (\infty ) =0$.
\end{lemma}
\begin{proof}
For simplicity assume $b(0)$ is a strict contraction. The argument can be extended to the general case as before.
If $\mbf{h} \in \ran{\Ga _1 (\infty)}$ then $\mbf{h} \in \ker{V}$, where $V = (I - k_0 ^b k^b (0,0) ^{-1} (k_0 ^b) ^*) X$ and
$\mbf{h} \in \ker{\mbf{b} ^* } = \ran{\mbf{b}} ^\perp$.
Then calculate:
\ba z (k_z ^b ) ^* \mbf{h} & = & \left( X^* k_z ^b + \mbf{b} b(z) ^* \right) ^* \mbf{h} \nn \\
&= & \left( X^* k_0 ^b k^b (0,0) ^{-1} (k_0 ^b) ^* k_z ^b + V^* k_z ^b \right) ^* \mbf{h} \nn \\
& = & \left( \mbf{b} b(0) ^* k^b (0,0) ^{-1} k ^b (0, z) \right) ^* \mbf{h}  =  0. \nn \ea
\end{proof}

Since $X$ is a contractive extension of the CCNC row partial isometry $V$, set
$$\Ga (z) := (I -Xz^*) ^{-1} \Ga (0) = k_z ^b D_{b(0)^*} ^{-1}. $$ This defines an analytic model triple, $(\Ga ,  \J \oplus \J ' , \K )$ for $V$, and
we now calculate the characteristic function of $V$ using this model triple: First note that by the previous Lemma,
\ba  z N^\Ga (z) & = &  z(\Ga (z) ^* \otimes I_d)  \left( \Ga _0 (\infty ) \oplus \Ga _1 (\infty ) \right) \nn \\
& = & z (\Ga (z) ^* \otimes I_d ) \Ga _0 (\infty ) + 0 _{\J ' , \K } =: z N_0 ^\Ga (z) + 0 _{\J ' , \K}, \nn \ea
so that $b^\Ga _V (z) = D^\Ga (z) ^{-1} z N^\Ga (z)$ coincides weakly with
\be b^{\ang{0}} (z) := D^\Ga (z) ^{-1} z N_0 ^\Ga (z) \in \scr{L} (\J, \K ). \label{fullFrostdef} \ee Even if $b$ is only purely contractive on the ball,
$D^\Ga (z)$ is a bounded, invertible operator for $z \in \B ^d$ by Lemma \ref{Dinvert}, and since $b^{\ang{0}} (0) =0$, $b^{\ang{0}} \in \scr{S} _d (\J , \K)$ is strictly contractive. Note that the construction of $b^{\ang{0}}$ is independent of the choice of extremal Gleason solution, $X$, for $\scr{H} (b)$.

In the case where $b(0)$ is a strict contraction, the formula for $b^{\ang{0}}$ can be computed more explicitly:
\ba D ^\Ga (z) & = & \Ga (z) ^* \Ga (0) \nn \\
& = & \Ga (0) ^* (I -zX^*) ^{-1} \Ga (0) \nn \\
& = & D_{b(0) ^*} ^{-1} (k_0 ^b) ^* (I -zX^*) ^{-1} k_0 ^b D_{b(0) ^*} ^{-1} \nn \\
& = & D_{b(0) ^*} ^{-1} (k_z ^b) ^* k_0 ^b D_{b(0) ^*} ^{-1} \nn \\
& = & D_{b(0) ^*} ^{-1} (I _{\J _0} - b(z) b(0) ^* ) D_{b(0) ^*} ^{-1}. \nn \ea
Similarly,
\ba z N _0 ^\Ga (z) & = & z (\Ga (z) ^* \otimes I_d) \Ga _0 (\infty) \nn \\
& = &  D_{b(0)^*} ^{-1} z (k_z ^b \otimes I_d ) ^*  \mbf{b} D_{b(0) } ^{-1}  \nn \\
& = & D_{b(0) ^* } ^{-1} ( b(z) - b(0) ) D_{b(0)} ^{-1}, \nn \ea and finally,
\ba b ^{\ang{0}} (z) & = & D^\Ga (z) ^{-1} z N _0 ^\Ga (z)  \nn \\
& = & D _{b(0) ^*  } \left( I _{\K} - b(z) b(0) ^* \right) ^{-1} \left( b(z) - b(0) \right) D _{b(0) } ^{-1}. \label{FSstrictform} \ea
In summary, we have proven:
\begin{cor} \label{FScor}
Let $b \in \scr{S} _d (\J , \K )$ be a strictly contractive Schur-class function and let $X$ be any extremal Gleason solution for $\scr{H} (b)$
with isometric-purely contractive decomposition $X = V -C$. Then the characteristic function $b_V$, of the partial isometric part, $V$, of $X$ coincides weakly with $b^{\ang{0}} \in \scr{S} _d (\J, \K)$.
\end{cor}

Let  $b^{\ang{0}} = b_V ^\Ga |_\J =: \Phi _{b(0) } \circ b$, where for any strict contractions $\alpha, \beta \in \scr{L} (\J , \K)$, we define
\be \Phi _{\alpha } (\beta ) := D _{\alpha ^* } \left( I _{\K} - \beta \alpha ^* \right) ^{-1} \left( \beta - \alpha \right) D _{\alpha } ^{-1} \in \scr{L} (\J , \K ). \ee As we will show in the following section, for any strict contraction $\alpha \in \scr{L} (\J, \K)$, the operator-M\"{o}bius transformation $\Phi _\alpha : [ \scr{L} (\J, \K) ] _1 \rightarrow [\scr{L} (\J, \K ) ] _1$ is an automorphism (\emph{i.e.} a bijection) of the unit ball of $\scr{L} (\J , \K )$,
and $\Phi _\alpha : \scr{S} _d (\J, \K) \rightarrow \scr{S} _d (\J , \K )$ maps the Schur class onto itself. Given any strict contraction $\alpha \in \scr{L} (\J , \K )$ and $b \in \scr{S } _d (\J , \K )$, $\Phi _\alpha \circ b$ is an operator-generalization of what is called a Frostman shift of the Schur-class function $b$ in the classical case where $b$ is a contractive analytic function on the disk \cite{Frostman,Frostman2,GR-model}. (This definition will be extended suitably to pure contractions.) The above example shows that if $X$ is any extremal Gleason solution for $\scr{H} (b)$ with partial isometric part $V$, then $b_V =b ^{\ang{0}} = \Phi _{b(0)} \circ b$ is the Frostman shift of $b$ vanishing at the origin.

\section{Commutative de Branges-Rovnyak model for CCNC row contractions} \label{CCNCmodelsect}

The goal of this section is to show that if $T = V -C :\mc{H} \otimes \C ^d \rightarrow \mc{H}$ is any CCNC row contraction with partial isometric part $V$ and purely contractive part $C$,
then $T$ is unitarily equivalent to an extremal Gleason solution $X$ for $\scr{H} (b)$, where the (purely contractive) Schur-class function $b = b_T$ is a certain Frostman shift of the characteristic function $b_V$ of $V$. As in the case where $T=V$ is a row partial isometry, the characteristic function $b_T$, of $T$, will be a unitary invariant for CCNC row contractions.

\subsection{Automorphisms of the unit ball of $\scr{L} (\mc{H} , \K)$}

Recall that $b \in \scr{S} _d (\mc{H} , \K)$ is said to be strictly contractive if $b(z)$ is a strict contraction for all $z \in \B ^d$ and purely contractive if $b(z)$ is a pure contraction for all $z \in \B ^d$, \emph{i.e.}, if $\| b(z) h \| < \| h \|$ for any $h \in \mc{H} $ and $z \in \B ^d$. Further recall that $b$ is strictly contractive if and only if $b(0)$ is a strict contraction so that, in particular, $b_V$ is strictly contractive for any CCNC row partial isometry $V$ (since $b_V (0) = 0$).

Consider the closed unit ball $\left[ \scr{L} (\mc{H} , \K ) \right] _1$, where $\mc{H}, \K$ are Hilbert spaces. Assume that $\alpha  \in [ \scr{L} (\mc{H} , \K) ]_1$ is a pure contraction
so that the defect operators $D_\alpha, D_{\alpha ^* } $ have closed, densely defined inverses. The $\alpha$-\emph{Frostman transformation}, $\Phi _\alpha$, and the inverse $\alpha-$Frostman transformation, $\Phi _\alpha ^{-1}$ are the maps defined on the open unit ball $\left( \scr{L} (\mc{H} , \K ) \right) _1$ by
\be \Phi _\alpha (\beta ) := D _{\alpha ^* } \left( I _\K - \beta \alpha ^* \right) ^{-1} \left( \beta -\alpha \right) D _{\alpha } ^{-1}, \ee and
\be \Phi _\alpha ^{-1} (\beta ) := \Phi _{-\alpha ^*} (\beta ^* ) ^* = D _{\alpha^*} ^{-1} \left( \beta + \alpha \right) \left( I_\mc{H} + \alpha ^* \beta \right) ^{-1} D_{\alpha}. \nn \ee To be precise, for any pure contraction $\alpha$, $\Phi _\alpha (\beta )$ is initially defined on the dense domain $\ran{D_{\alpha}}$, and $(\Phi _\alpha ^{-1} (\beta ) )^*$ is defined on the dense domain $\ran{D_{\alpha ^*}}$. The next lemma below shows that both of these densely defined transformations are contractions, and hence extend by continuity to the entire space.
If $\alpha$ is actually a strict contraction, we extend these definitions of $\Phi _\alpha , \Phi _{\alpha} ^{-1}$ to the closed unit ball.
\begin{lemma} \label{ballauto}
Let $\alpha \in [ \scr{L} (\mc{H} , \K) ] _1$.
\bi
\item[(i)] If $\alpha$ is a pure contraction, the Frostman transformations $\Phi _\alpha, \Phi _{\alpha} ^{-1}$ map $\left( \scr{L} (\mc{H} , \K) \right) _1$ into pure contractions in $[ \scr{L} (\mc{H} , \K ) ] _1$.

\item[(ii)] If $\alpha$ is a strict contraction, $\Phi _\alpha , \Phi _{\alpha} ^{-1}$ are compositional inverses and define bijections of $\left[ \scr{L} (\mc{H} , \K ) \right] _1$onto itself.

\item[(iii)] Given any two strict contractions $\beta , \ga$, $\Phi _\alpha$ and $\Phi _\alpha ^{-1} $ obey the identities:
$$ I - \Phi _\alpha (\beta ) \Phi _\alpha (\ga ) ^* = D_{\alpha ^*} (I - \beta \alpha ^* ) ^{-1} (I - \beta \ga ^* ) (I -\alpha \ga ^* ) ^{-1} D_{\alpha ^*}, \ \mbox{and},$$
$$ I - \Phi _\alpha ^{-1} (\beta ) \Phi _\alpha  ^{-1} (\ga ) ^* = D_\alpha ^* (I + \beta \alpha ^* ) ^{-1} (I - \beta \ga ^* ) (I + \alpha \ga ^*) ^{-1} D_{\alpha ^*}. $$
\ei
\end{lemma}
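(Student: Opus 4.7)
The plan is to establish part (iii) first, from which part (i) follows by specialising to $\gamma = \beta$, and then to prove part (ii) by a direct verification that $\Phi_\alpha \circ \Phi_\alpha^{-1}$ and $\Phi_\alpha^{-1} \circ \Phi_\alpha$ are the identity. The three technical ingredients I will use throughout are the defect identities $D_{\alpha^*}^2 = I - \alpha\alpha^*$ and $D_\alpha^2 = I - \alpha^*\alpha$; the intertwiners $\alpha D_\alpha = D_{\alpha^*}\alpha$ and $D_\alpha \alpha^* = \alpha^* D_{\alpha^*}$, established by the continuous functional calculus from the identity $\alpha(I-\alpha^*\alpha) = (I-\alpha\alpha^*)\alpha$; and the resolvent-type identity $D_{\alpha^*}^{-2} = I + \alpha D_\alpha^{-2}\alpha^*$ together with its mirror $D_\alpha^{-2} = I + \alpha^* D_{\alpha^*}^{-2}\alpha$, valid on the appropriate dense domain when $\alpha$ is only pure.

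For the first identity of (iii), I would multiply both sides on the left by $(I-\beta\alpha^*)D_{\alpha^*}^{-1}$ and on the right by $D_{\alpha^*}^{-1}(I-\alpha\gamma^*)$, reducing the claim to
\begin{equation*}
(I-\beta\alpha^*)D_{\alpha^*}^{-2}(I-\alpha\gamma^*) - (\beta-\alpha)D_\alpha^{-2}(\gamma^*-\alpha^*) = I - \beta\gamma^*.
\end{equation*}
Expanding the left-hand side and applying the intertwiners $D_{\alpha^*}^{-2}\alpha = \alpha D_\alpha^{-2}$ and $\alpha^*D_{\alpha^*}^{-2} = D_\alpha^{-2}\alpha^*$ (and their consequences $\alpha^*D_{\alpha^*}^{-2}\alpha = D_\alpha^{-2} - I$ and $\alpha D_\alpha^{-2}\alpha^* = D_{\alpha^*}^{-2} - I$), six of the eight arising terms cancel in pairs, and the remaining two collapse to $I - \beta\gamma^*$. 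The second identity of (iii) then follows by the analogous direct expansion using the mirror resolvent identity, with signs flipped to accommodate the $+$'s in the definition of $\Phi_\alpha^{-1}$.

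Part (i) is an immediate consequence of (iii): with $\gamma = \beta \in (\scr{L}(\H,\K))_1$, the first identity expresses $I - \Phi_\alpha(\beta)\Phi_\alpha(\beta)^* = A^*A$ with $A := D_{\beta^*}(I-\alpha\beta^*)^{-1}D_{\alpha^*}$, yielding $\|\Phi_\alpha(\beta)\| \leq 1$. For the purity statement $\|\Phi_\alpha(\beta)g\| < \|g\|$ for $g \neq 0$, I would prove the companion identity
\begin{equation*}
I - \Phi_\alpha(\beta)^*\Phi_\alpha(\gamma) = D_\alpha(I-\beta^*\alpha)^{-1}(I-\beta^*\gamma)(I-\alpha^*\gamma)^{-1}D_\alpha
\end{equation*}
by the same method (conjugate by $D_\alpha^{-1}$, expand, apply the mirror intertwiners, collapse the cross terms); specialising to $\gamma = \beta$ factors the right-hand side as $B^*B$ with $B := D_\beta(I-\alpha^*\beta)^{-1}D_\alpha$, and $\ker B = \{0\}$ follows from $\ker D_\beta = \{0\}$ (strictness of $\beta$) and $\ker D_\alpha = \{0\}$ (purity of $\alpha$). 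The same analysis, applied to the second identity of (iii) and its companion, handles $\Phi_\alpha^{-1}$.

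For part (ii), when $\alpha$ is strict both $D_\alpha$ and $D_{\alpha^*}$ are boundedly invertible and every Frostman expression is well-defined on the full unit ball. I would verify $\Phi_\alpha(\Phi_\alpha^{-1}(\beta)) = \beta$ directly by substitution: inserting the formula for $\Phi_\alpha^{-1}(\beta)$ into the defining expression for $\Phi_\alpha$, the intertwiner $D_\alpha\alpha^* = \alpha^*D_{\alpha^*}$ lets defect operators migrate across $\alpha$, after which $\Phi_\alpha^{-1}(\beta) - \alpha$ simplifies to a product of the form $D_{\alpha^*}\beta(I+\alpha^*\beta)^{-1}D_\alpha$ and $(I-\Phi_\alpha^{-1}(\beta)\alpha^*)^{-1}$ simplifies to an expression of the form $D_{\alpha^*}^{-1}(I+\alpha^*\beta)D_{\alpha^*}$, and the products telescope to $\beta$. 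The symmetric computation yields $\Phi_\alpha^{-1}(\Phi_\alpha(\beta)) = \beta$; combined with (i) this establishes the bijectivity statements. The main obstacle is the algebraic bookkeeping in this last step, where a chain of intertwiners, resolvents and defect operators must be tracked consistently; in the only-pure case one must moreover verify that each appearing inverse is densely defined on the image of the preceding factor, a check that reduces to the observation that $\beta \pm \alpha$ carries $\mathrm{Ran}(D_\alpha)$ into $\mathrm{Ran}(D_{\alpha^*})$.
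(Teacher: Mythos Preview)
Your approach is correct and is organized differently from the paper's. The paper attacks (i) first: for $h=D_\alpha g$ in the dense range of $D_\alpha$ it computes $\|h\|^2-\|\Phi_\alpha(\beta)h\|^2$ by substituting the ad hoc identity $(I-\beta\alpha^*)^{-1}(\beta-\alpha)=\beta(I-\alpha^*\beta)^{-1}D_\alpha^2-\alpha$ and simplifying, arriving exactly at your companion identity $D_\alpha^2-\Phi_\alpha(\beta)^*\Phi_\alpha(\beta)D_\alpha^2=D_\alpha^2(I-\beta^*\alpha)^{-1}D_\beta^2(I-\alpha^*\beta)^{-1}D_\alpha^2$ restricted to $\ran{D_\alpha}$. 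It then verifies $\Phi_\alpha^{-1}\circ\Phi_\alpha=\mathrm{id}$ by the same kind of numerator/denominator simplification you outline (just in the other order), and relegates (iii) to ``similarly easy to verify.'' Your route --- prove (iii) by conjugating out the defect factors and cancelling via the intertwiners $\alpha D_\alpha=D_{\alpha^*}\alpha$, then read off (i) from $\gamma=\beta$ --- is more systematic and makes the parallelism between $\Phi_\alpha$ and $\Phi_\alpha^{-1}$ transparent; the paper's route is slightly more economical because it never needs to justify the a priori boundedness of $\Phi_\alpha(\beta)$ before stating (iii).

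Two small points. First, a typo: your intermediate form for $(I-\Phi_\alpha^{-1}(\beta)\alpha^*)^{-1}$ should read $D_{\alpha^*}^{-1}(I+\beta\alpha^*)D_{\alpha^*}^{-1}$, not $D_{\alpha^*}^{-1}(I+\alpha^*\beta)D_{\alpha^*}$; with the corrected form the telescoping to $\beta$ goes through exactly as you describe. Second, in the pure-but-not-strict case your final remark that ``$\beta\pm\alpha$ carries $\ran{D_\alpha}$ into $\ran{D_{\alpha^*}}$'' is not obviously true for general strict $\beta$ and is not what is actually needed: what makes $\Phi_\alpha(\beta)$ well-defined is simply that $D_\alpha^{-1}$ acts first on $\ran{D_\alpha}$, with the outermost factor being the bounded $D_{\alpha^*}$. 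The correct way to close the loop here is the paper's: establish the companion identity as a quadratic-form statement on $\ran{D_\alpha}$, deduce boundedness of $\Phi_\alpha(\beta)$ from that, and only then assert (iii) as an identity of bounded operators.
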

\begin{proof}
Assume that $\alpha $ is a pure contraction so that $D_\alpha := \sqrt{ I - \alpha ^* \alpha }$ has dense range. Given any $h = D_\alpha g \in \ran{D_\alpha }$, and any strict contraction $\beta$,
consider:
 $$ \| h \| ^2 - \| \Phi _\alpha (\beta ) h \| ^2  =  \ip{ D_\alpha ^2 g}{g} - \ip{ \underbrace{(\beta ^* -\alpha ^* ) (I _\K - \alpha \beta ^* ) ^{-1} D_{\alpha ^*} ^2 (I_\K - \beta \alpha ^* ) ^{-1} (\beta -\alpha )}_{\mbox{(A)}} g}{g}. $$
Observe that
$$ (I - \beta \alpha ^* )^{-1} (\beta - \alpha ) = \beta (I -\alpha ^* \beta ) ^{-1} D_\alpha ^2 - \alpha. $$ This identity is easily verified by multiplying both sides from the left by $(I - \beta \alpha ^* )$. Substitute this identity into the expression (A) to obtain:
\ba \mbox{(A)} & = & D _\alpha ^2 (I -\beta ^* \alpha ) ^{-1} \beta ^* D_{\alpha ^* } ^2 \beta (I -\alpha ^* \beta ) ^{-1} D_\alpha ^2  + \alpha ^* D_{\alpha ^*} ^2 \alpha \nn \\
& & - D_\alpha ^2 (I -\beta ^* \alpha) ^{-1} \beta ^* D_{\alpha ^* } ^2 \alpha - \alpha ^* D_{\alpha ^* } ^2 \beta (I -\alpha ^* \beta ) ^{-1} D_{\alpha } ^2 \nn \\
& = &  D _\alpha ^2 (I -\beta ^* \alpha ) ^{-1} \beta ^* D_{\alpha ^* } ^2 \beta (I -\alpha ^* \beta ) ^{-1} D_\alpha ^2 + D_{\alpha } ^2 \alpha ^* \alpha \nn \\ & & - D_\alpha ^2 (I -\beta ^* \alpha ) ^{-1} \beta ^* \alpha D_{\alpha } ^2 - D_{\alpha } ^2 \alpha ^* \beta (I -\alpha ^* \beta )^ {-1} D_{\alpha } ^2. \nn \ea
Applying the identity $D_\alpha ^2 - D_\alpha ^2 \alpha ^* \alpha = D_\alpha ^4$,
\ba D_\alpha ^2 - \mbox{(A)} & = & D_{\alpha} ^4 - D _\alpha ^2 (I -\beta ^* \alpha ) ^{-1} \beta ^* D_{\alpha ^* } ^2 \beta (I -\alpha ^* \beta ) ^{-1} D_\alpha ^2 \nn \\
& & + D_\alpha ^2 (I -\beta ^* \alpha ) ^{-1} \beta ^* \alpha D_{\alpha } ^2 + D_{\alpha } ^2 \alpha ^* \beta (I -\alpha ^* \beta ) ^{-1} D_{\alpha } ^2. \nn \\
& = & D_\alpha ^2 (I -\beta ^* \alpha ) ^{-1} \left[ (I -\beta ^* \alpha ) (I -\alpha ^* \beta ) - \beta ^* D_{\alpha ^*} ^2 \beta \right.\nn \\
& & + \left. \beta ^* \alpha (I - \alpha ^* \beta ) + (I - \beta ^* \alpha ) \alpha ^* \beta  \right] (I -\alpha ^* \beta ) ^{-1} D_{\alpha } ^2 \nn \\
& = & D_\alpha ^2 (I -\beta ^* \alpha ) ^{-1} (I - \beta ^* \beta ) (I -\alpha ^* \beta ) ^{-1} D_{\alpha } ^2. \nn \ea
This proves that for any $h \in \ran{D_\alpha}$,
$$ \| h \| ^2 - \| \Phi _\alpha (\beta ) h \| ^2 =  \ip{D_\alpha  (I -\beta ^* \alpha ) ^{-1} (I - \beta ^* \beta ) (I -\alpha ^* \beta ) ^{-1} D_{\alpha } h}{h} > 0, $$ and it follows that $\Phi _\alpha (\beta )$ is a pure contraction (and it will be a strict contraction if $\alpha$ is strict).  If $\alpha$ is a strict contraction then the above formulas make sense with $\beta$ only a pure contraction, and $\Phi _\alpha (\beta )$ will be a pure contraction in this case.

Assuming that $\alpha , \beta$ are both strict contractions, the expression $\Phi _\alpha ^{-1} ( \Phi _\alpha (\beta ) )$ is well-defined. It remains to calculate:
 $$ \Phi _{\alpha } ^{-1} ( \Phi _\alpha (\beta ) )  =  D_{\alpha ^* } ^{-1} \underbrace{\left( \Phi _\alpha (\beta ) + \alpha \right)}_{\mbox{(N)}}\cdot \underbrace{\left( I + \alpha ^* \Phi _\alpha (\beta ) \right) ^{-1} }_{\mbox{(D)} ^{-1} } D_\alpha. $$
The denominator, (D), evaluates to:
\ba \mbox{(D)} & = & D_\alpha (I - \alpha ^* \beta ) ^{-1} \left( I -\alpha ^* \beta +\alpha ^* \beta  -\alpha ^* \alpha \right) D_{\alpha } ^{-1} \nn \\
& = & D_\alpha (I -\alpha ^* \beta ) ^{-1} D_\alpha, \nn \ea while the numerator, (N), evaluates to
\ba \mbox{(N)} & = & D_{\alpha ^*} (I -\beta \alpha ^* ) ^{-1} \left( \beta -\alpha + ( I -\beta \alpha ^*) D_{\alpha ^*} ^{-1} \alpha D_\alpha \right) D_{\alpha } ^{-1} \nn \\
& = & D_{\alpha ^*} (I -\beta \alpha ^* ) ^{-1} \beta D_\alpha. \nn \ea
It follows that the full expression is
\ba \Phi _{\alpha } ^{-1} ( \Phi _\alpha (\beta ) ) & = & (I -\beta \alpha ^* ) ^{-1} \beta (I -\alpha ^* \beta ) = \beta. \nn \ea
The remaining assertions are similarly easy to verify.
\end{proof}

\subsection{Frostman shifts of Schur functions}

For any strictly contractive $b \in \scr{S} _d (\J , \K )$, it was proven in Subsection \ref{FSeg} (see Equation \ref{FSstrictform}) that
$$ b^{\ang{0}} (z) := D_{b(0) ^*}  (I _{\J _0} - b(z) b(0) ^* ) ^{-1}   ( b(z) - b(0) ) D_{b(0)} ^{-1} = (\Phi _{b(0)} \circ b) (z), $$
belongs to $\scr{S} _d (\J , \K)$ and vanishes at $0$. In particular, it follows that $b^{\ang{0}}$ is strictly contractive on the ball (since it vanishes at $0$).

\begin{defn}
Let $b \in \scr{S} _d (\J , \K )$ be a purely contractive Schur-class multiplier. The \emph{$0$-Frostman shift of $b$} is the strictly contractive Schur-class function $b^{\ang{0}} \in \scr{S} _d (\J , \K )$ of Equation (\ref{fullFrostdef}).

For any pure contraction $\alpha \in \scr{L} (\J , \K )$, the \emph{$\alpha$-Frostman shift of $b$} is the purely contractive (strictly contractive if $\alpha$ is a strict contraction) analytic operator-valued function $b^{\ang{\alpha}} := \Phi ^{-1} _{\alpha} \circ b^{\ang{0}}.$
\end{defn}
Theorem \ref{FShiftSchur} below will prove that the $\alpha$-Frostman shift $b^{\ang{\alpha}}$ of any $b \in \scr{S} _d (\J , \K )$ also belongs to the Schur class.
\begin{lemma} \label{pureFSpf}
For any purely contractive $b \in \scr{S} _d (\J , \K )$, $b = b^{\ang{b(0)}} = (b^{\ang{0}} ) ^{\ang{b(0)}} = \Phi ^{-1} _{b(0)} \circ b^{\ang{0}}.$
\end{lemma}
\begin{proof}
    If $b(0)$ is a strict contraction, this follows immediately from Lemma \ref{ballauto}. If $b(0)$ is only a pure contraction, then consider
$$ b^{\ang{b(0)}} = D_{b(0) ^*} ^{-1} \left( b^{\ang{0}} + b(0) \right) \left( I + b(0) ^* b^{\ang{0}} \right) ^{-1} D_{b(0)}. $$
Since $b^{\ang{0}}$ is Schur-class and strictly contractive, Lemma \ref{ballauto} implies that for any pure contraction $\alpha$, $b^{\ang{\alpha}} = \Phi _{\alpha} ^{-1} \circ b^{\ang{0}}$ is purely contractive. The formula above initially defines $b^{\ang{b(0)}} (z) ^*$ on a dense domain, and Lemma \ref{ballauto} shows that this extends by continuity to a pure contraction on the entire space.

Recall the notation of Subsection \ref{FSeg}, and equation (\ref{fullFrostdef}), which defines $b^{\ang{0}} (z):= D(z) ^{-1} z N_0 (z)$ with $D(z) = \Ga (z) ^* \Ga (0)$, $z N_0 (z) = z (\Ga (z) ^* \otimes I_d ) \Ga _0 (\infty )$, and $\Ga (z) , \Ga _0 (\infty)$ are the analytic model maps defined in Subsection \ref{FSeg}. Then consider
\ba D_{b(0) ^* } (D(z) + z N_0 (z)  b(0) ^* ) D_{b(0) ^*} & = & k^b (z,0) + z \left( k_z ^b \otimes I_d \right) ^* \Ga _0 (\infty ) D_{b(0)} b(0) ^* \nn \\
& = & k^b (z,0) + z \left( k_z ^b \otimes I_d \right) ^* \mbf{b} b(0) ^* \nn \\
& = & I - b(0) b(0) ^* = D_{b(0) ^*} ^2. \nn \ea
This proves that $D(z) + z N_0 (z) b(0) ^* = I$.
Next consider the denominator term,
\ba I + b (0) ^* b ^{\ang{0}} &= & I + b(0) ^* D(z) ^{-1} z N_0 (z) \nn \\
& = & I + b(0) ^* (I -z N_0 (z) b(0) ^* ) ^{-1} z N_0 (z) \nn \\
& = & I + (I - b(0) ^* z N_0 (z) ) ^{-1} - I. \nn \ea
The full expression is then

\ba & & D_{b(0) ^*} b^{\ang{b(0)}} (z) =  (D(z) ^{-1} z N_0 (z) + b(0) ) ( I - b(0) ^* z N_0 (z) ) D_{b(0)} \nn \\
& = & \left( D(z) ^{-1} z N_0 (z) - D(z) ^{-1} z N_0 (z) b(0) ^* z N_0 (z) + b(0) - b(0) b(0) ^* z N_0 (z) \right) D_{b(0)} \nn \\
& = & \left( D(z) ^{-1} z N_0 (z) - D(z)  ^{-1} (I - D(z) ) z N_0 (z) + b(0) - b(0) b(0) ^* z N_0 (z) \right) D_{b(0)} \nn \\
& = & b(0)D_{b(0)}  + D_{b(0) ^* } ^2 z N_0 (z) D_{b(0)} \nn \\
& = & D_{b(0) ^*} b(0) + D_{b(0) ^*}  z \mbf{b} (z) \nn \\
&= & D_{b(0) ^*} b(z). \nn \ea

\end{proof}

\begin{thm} \label{FShiftSchur}
    Given any purely contractive Schur function $b \in \scr{S} _d (\mc{H} , \K )$, and any pure contraction $\alpha \in \scr{L} (\mc{H} , \K )$, the $\alpha$-Frostman shift, $b^{\ang{\alpha}} \in \scr{S} _d (\mc{H} , \K)$ is also Schur-class and purely contractive,
$$ b^{\ang{\alpha }} (z) = D_{\alpha ^* } ^{-1} \left(b ^{\ang{0}} (z) + \alpha \right)\left(I + \alpha ^* b^{\ang{0}} (z) \right) ^{-1} D_\alpha. $$

Multiplication by
$$ M ^{\ang{\alpha}} (z) := D_{\alpha ^*} (I + b^{\ang{0}} (z) \alpha ^* ) ^{-1} = (I -b^{\ang{\alpha}} (z) \alpha ^* ) D_{\alpha ^*} ^{-1}, $$ defines
a unitary multipier, $M^{\ang{\alpha}}$, of $\scr{H} (b^{\ang{0}})$ onto $\scr{H} (b^{\ang{\alpha}})$.
\end{thm}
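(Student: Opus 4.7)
The plan is to derive everything from Lemma \ref{ballauto}, specifically parts (i) and (iii), applied pointwise with $\beta = b^{\ang{0}}(z)$ and $\gamma = b^{\ang{0}}(w)$. Recall that $b^{\ang{\alpha}}$ is defined as $\Phi_\alpha^{-1} \circ b^{\ang{0}}$, and $b^{\ang{0}}$ is a strictly contractive Schur class function that vanishes at $0$. The claimed formula for $b^{\ang{\alpha}}$ is immediate from the definition of $\Phi_\alpha^{-1}$, while Lemma \ref{ballauto}(i) shows that $b^{\ang{\alpha}}(z)$ is a pure contraction for each $z \in \B^d$. Analyticity of $b^{\ang{\alpha}}$ follows from the formula and the analyticity of $b^{\ang{0}}$.

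Next I would verify the two expressions for $M^{\ang{\alpha}}(z)$ agree by checking the equivalent algebraic identity
\[ (I - b^{\ang{\alpha}}(z) \alpha^*)(I + b^{\ang{0}}(z) \alpha^*) = D_{\alpha^*}^2. \]
Expanding and using $\alpha^*(I + b^{\ang{0}}(z)\alpha^*) = (I + \alpha^* b^{\ang{0}}(z)) \alpha^*$ together with the formula for $b^{\ang{\alpha}}$, the cross terms collapse and the right-hand side reduces to $I - \alpha\alpha^* = D_{\alpha^*}^2$.

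The heart of the argument is the kernel identity. Applying the second identity of Lemma \ref{ballauto}(iii) with $\beta = b^{\ang{0}}(z)$ and $\gamma = b^{\ang{0}}(w)$ yields
\[ I - b^{\ang{\alpha}}(z)\,b^{\ang{\alpha}}(w)^* = D_{\alpha^*}(I + b^{\ang{0}}(z) \alpha^*)^{-1} (I - b^{\ang{0}}(z) b^{\ang{0}}(w)^*)(I + \alpha\, b^{\ang{0}}(w)^*)^{-1} D_{\alpha^*}, \]
which, using the second form $M^{\ang{\alpha}}(z) = D_{\alpha^*}(I + b^{\ang{0}}(z)\alpha^*)^{-1}$ and its adjoint, rewrites as
\[ I - b^{\ang{\alpha}}(z)\,b^{\ang{\alpha}}(w)^* = M^{\ang{\alpha}}(z) \bigl(I - b^{\ang{0}}(z)b^{\ang{0}}(w)^*\bigr) M^{\ang{\alpha}}(w)^*. \]
Dividing by $1 - zw^*$ gives
\[ k^{b^{\ang{\alpha}}}(z,w) = M^{\ang{\alpha}}(z)\, k^{b^{\ang{0}}}(z,w)\, M^{\ang{\alpha}}(w)^*. \]

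From this kernel identity both remaining claims follow at once: since $k^{b^{\ang{0}}}$ is a positive kernel and conjugation by a fixed operator-valued function preserves positivity, $k^{b^{\ang{\alpha}}}$ is positive, hence $b^{\ang{\alpha}} \in \scr{S}_d(\H,\K)$ by \cite[Theorem 2.1]{Ball2001-lift}. Moreover, the same identity is precisely the reproducing kernel criterion for $M^{\ang{\alpha}}$ to be a unitary multiplier: it shows that the densely defined map $k_w^{b^{\ang{0}}} g \mapsto k_w^{b^{\ang{\alpha}}} M^{\ang{\alpha}}(w) g$ is inner-product preserving and has dense range, so it extends to an onto isometry which coincides with multiplication by $M^{\ang{\alpha}}$ on the span of kernel vectors. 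The main obstacle is essentially bookkeeping: verifying the second form of $M^{\ang{\alpha}}$ and correctly applying Lemma \ref{ballauto}(iii); no deeper analytic input is required.
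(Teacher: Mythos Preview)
Your approach is correct and essentially identical to the paper's: both apply the identities of Lemma~\ref{ballauto}(iii) pointwise to obtain the kernel factorization $k^{b^{\ang{\alpha}}}(z,w)=M^{\ang{\alpha}}(z)\,k^{b^{\ang{0}}}(z,w)\,M^{\ang{\alpha}}(w)^*$, and then read off both Schur-class membership (via \cite[Theorem 2.1]{Ball2001-lift}) and the unitary multiplier property. One small slip: the isometry induced by this factorization acts on kernels in the adjoint direction, $k_w^{b^{\ang{\alpha}}}h\mapsto k_w^{b^{\ang{0}}}M^{\ang{\alpha}}(w)^*h$ (this is $(M^{\ang{\alpha}})^*$), not the map you wrote; the paper handles onto-ness by also recording the inverse factorization via the first identity in Lemma~\ref{ballauto}(iii).
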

The unitary multiplier $M ^{\ang{\alpha}}$ is a multivariable and operator analogue of a Crofoot multiplier or \emph{Crofoot transform} \cite{GR-model,Crofoot}. Since $M^{\ang{\alpha}}$ is a unitary multiplier, it follows, in particular, that $M^{\ang{\alpha} } (z)$ defines a bounded invertible operator for any $z \in \B ^d$.
\begin{proof}
By the identities of Lemma \ref{ballauto}, 
\ba & &  I - b^{\ang{\alpha}} (z) b^{\ang{\alpha}} (w) ^*  =  I - \Phi _{\alpha } ^{-1} ( b^{\ang{0}} (z) ) \Phi _{\alpha } ^{-1} (b^{\ang{0}} (w) ) ^* \nn \\
& = & D_{\alpha ^*} (I + b^{\ang{0}} (z) \alpha ^*) ^{-1} \left( I -b^{\ang{0}} (z) b^{\ang{0}} (w) ^* \right) (I  + \alpha b^{\ang{0}} (w) ^* ) ^{-1} D_{\alpha ^*}. \nn \ea  Similarly, suppose $\alpha$ is a strict contraction (the argument can be modified to prove the case where $\alpha$ is pure), then
$$ I - b^{\ang{0}} (z) b^{\ang{0}} (w) ^* = D_{\alpha ^* } (I - b^{\ang{\alpha}} (z) ) \alpha ^* ) ^{-1} \left( I - b^{\ang{\alpha}} (z) b^{\ang{\alpha}} (w) ^* \right) (I - \alpha b^{\ang{\alpha}} (w) ^* ) ^{-1} D_{\alpha ^*}, $$
This proves simultaneously that $k^{\ang{\alpha}}:= k^{b^{\ang{\alpha}}}$ is a positive kernel so that $b ^{\ang{\alpha}} \in \scr{S} _d (\mc{H} , \K))$ by \cite[Theorem 2.1]{Ball2001-lift}, and that $M ^{\ang{\alpha}} (z)$ as written above
is a unitary multiplier of $\scr{H} (b^{\ang{0}} )$ onto $\scr{H} (b^{\ang{\alpha}})$.
\end{proof}

\begin{thm} \label{FSGSbij}
The map,
$$ \mbf{b} ^{\ang{\alpha}} \mapsto \mbf{b}  ^{\ang{0}} := (M ^{\ang{\alpha}} \otimes I_d ) ^{-1} \mbf{b} ^{\ang{\alpha}} D_{\alpha} ^{-1}, $$ is a bijection from contractive Gleason solutions for $b ^{\ang{\alpha}}$
onto contractive Gleason solutions for $b ^{\ang{0}}$ which preserves extremal solutions.
\end{thm}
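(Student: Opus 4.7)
The plan is to transport Gleason solutions back and forth using the unitary multiplier $M^{\ang{\alpha}}$ of the preceding theorem, which identifies $\scr{H}(b^{\ang{0}})$ with $\scr{H}(b^{\ang{\alpha}})$. The argument hinges on the pointwise algebraic identity
$$ b^{\ang{\alpha}}(z) - \alpha \;=\; M^{\ang{\alpha}}(z)\, b^{\ang{0}}(z)\, D_\alpha, \qquad z \in \B ^d, $$
which I would verify by a direct calculation: starting from the defining formula $b^{\ang{\alpha}}(z) = D_{\alpha^*}^{-1}(b^{\ang{0}}(z)+\alpha)(I+\alpha^* b^{\ang{0}}(z))^{-1} D_\alpha$, subtract $\alpha$, use the intertwining $D_{\alpha^*}\alpha = \alpha D_\alpha$ to absorb the subtracted $\alpha$ into the middle factor, simplify the resulting numerator via $I - \alpha\alpha^* = D_{\alpha^*}^2$, apply the standard swap $b^{\ang{0}}(z)(I+\alpha^* b^{\ang{0}}(z))^{-1} = (I+b^{\ang{0}}(z)\alpha^*)^{-1} b^{\ang{0}}(z)$, and recognize the left-hand factor as $M^{\ang{\alpha}}(z) = D_{\alpha^*}(I+b^{\ang{0}}(z)\alpha^*)^{-1}$.

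Given any contractive Gleason solution $\mbf{b}^{\ang{0}}$ for $b^{\ang{0}}$, define $\mbf{b}^{\ang{\alpha}} := (M^{\ang{\alpha}}\otimes I_d)\, \mbf{b}^{\ang{0}}\, D_\alpha$; this is the inverse of the map in the theorem statement. Since $b^{\ang{0}}(z) = z\mbf{b}^{\ang{0}}(z)$ by hypothesis, the identity above immediately gives $z\mbf{b}^{\ang{\alpha}}(z) = b^{\ang{\alpha}}(z) - b^{\ang{\alpha}}(0)$, so $\mbf{b}^{\ang{\alpha}}$ solves the Gleason problem for $b^{\ang{\alpha}}$. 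Unitarity of multiplication by $M^{\ang{\alpha}}$ between the two deBranges--Rovnyak spaces immediately yields
$$ (\mbf{b}^{\ang{\alpha}})^* \mbf{b}^{\ang{\alpha}} \;=\; D_\alpha\, (\mbf{b}^{\ang{0}})^* \mbf{b}^{\ang{0}}\, D_\alpha, $$
so the inequality $(\mbf{b}^{\ang{0}})^* \mbf{b}^{\ang{0}} \leq I$ is transported to $(\mbf{b}^{\ang{\alpha}})^* \mbf{b}^{\ang{\alpha}} \leq D_\alpha ^2 = I - b^{\ang{\alpha}}(0)^* b^{\ang{\alpha}}(0)$, and equality transports to equality, so contractive (resp.\ extremal) Gleason solutions map onto contractive (resp.\ extremal) Gleason solutions.

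For the direction asserted in the theorem, the formal factor $D_\alpha^{-1}$ must be interpreted because $D_\alpha$ is only injective with dense range when $\alpha$ is merely purely contractive. Any contractive Gleason solution $\mbf{b}^{\ang{\alpha}}$ for $b^{\ang{\alpha}}$ satisfies $(\mbf{b}^{\ang{\alpha}})^* \mbf{b}^{\ang{\alpha}} \leq D_\alpha ^2$, so Douglas' factorization lemma produces a unique contraction $\mbf{G}:\H \rightarrow \scr{H}(b^{\ang{\alpha}})\otimes \C ^d$ with $\mbf{b}^{\ang{\alpha}} = \mbf{G}\, D_\alpha$; injectivity of $D_\alpha$ in the purely contractive case makes the kernel condition in Douglas' lemma vacuous, so $\mbf{G}$ is uniquely determined. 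Setting $\mbf{b}^{\ang{0}} := (M^{\ang{\alpha}}\otimes I_d)^{-1}\mbf{G}$ and reading the key identity in reverse shows $\mbf{b}^{\ang{0}}$ is a Gleason solution for $b^{\ang{0}}$, and the two displayed equations from the previous paragraph show it is contractive (resp.\ extremal) precisely when $\mbf{b}^{\ang{\alpha}}$ is. The principal obstacle is precisely this interpretation of $D_\alpha^{-1}$ in the non-strict case; once handled by Douglas' lemma, everything else follows from the Frostman identity and the unitarity of the Crofoot-type multiplier $M^{\ang{\alpha}}$.
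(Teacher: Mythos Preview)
Your proof is correct and follows essentially the same route as the paper: both arguments verify the Gleason property via the pointwise identity $M^{\ang{\alpha}}(z)^{-1}(b^{\ang{\alpha}}(z)-\alpha)D_\alpha^{-1}=b^{\ang{0}}(z)$ and transport the contractivity/extremality inequality using unitarity of $M^{\ang{\alpha}}$. Your use of Douglas' lemma to make sense of the factor $D_\alpha^{-1}$ in the merely purely contractive case is a welcome piece of rigor that the paper leaves implicit.
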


\begin{proof}
First, assume that $\alpha$ is a strict contraction. Let $\mbf{b} ^{\ang{\alpha}}$ be any contractive Gleason solution for $b ^{\ang{\alpha}}$. Then,
\ba z \mbf{b} ^{\ang{0}} (z) & = & z ( M^{\ang{\alpha}} (z) ^{-1} \otimes I_d ) \mbf{b} ^{\ang{\alpha}} (z) D_{\alpha } ^{-1} \nn \\
& = & M ^{\ang{\alpha}} (z) ^{-1}(b ^{\ang{\alpha}}(z) - \alpha ) D_{\alpha } ^{-1} \nn \\
& = & D_{\alpha ^*} (I -b ^{\ang{\alpha}} (z) \alpha ^* ) ^{-1} (b ^{\ang{\alpha}} (z) - \alpha ) D_{\alpha } ^{-1} \nn \\
& = & b ^{\ang{0}} (z), \nn \ea and this equals $b^{\ang{0}} (z) - b ^{\ang{0}} (0)$ since $b^{\ang{0}} (0) = 0$. This shows
that $\mbf{b} ^{\ang{0}}$ is a Gleason solution, and
\ba (\mbf{b} ^{\ang{0}})  ^* \mbf{b} ^{\ang{0}} & = & D _{\alpha } ^{-1} (\mbf{b} ^{\ang{\alpha}} ) ^* \mbf{b} ^{\ang{\alpha}} D_{\alpha} ^{-1} \nn \\
& \leq & D_{\alpha} ^{-1} (I -\alpha ^* \alpha ) D_{\alpha} ^{-1} = I, \nn \ea so that $\mbf{b} ^{\ang{0}}$ is a contractive Gleason solution which is extremal if $\mbf{b} ^{\ang{\alpha}}$ is.
The converse follows similarly.

In the case where $\alpha$ is not a strict contraction, set $b:=b^{\ang{\alpha}}$, so that $\alpha = b(0)$. Recall the notation of Subsection \ref{FSeg}, and equation (\ref{fullFrostdef}), which defines $b^{\ang{0}} (z):= D(z) ^{-1} z N_0 (z)$ with $D(z) = \Ga (z) ^* \Ga (0)$, $z N_0 (z) = z (\Ga (z) ^* \otimes I_d ) \Ga _0 (\infty )$, and $\Ga (z) , \Ga _0 (\infty)$ are the analytic model maps defined in Subsection \ref{FSeg}. Namely, $\Ga (z) D_{\alpha ^*} =k_0 ^b$, and $\Ga _0 (\infty ) D_\alpha = \mbf{b}$. As in the proof of Lemma \ref{pureFSpf}, it follows that $D(z) + zN_0 (z) \alpha ^* =I$, so that 
\ba M^{\ang{\alpha}} (z) & = & D_{\alpha ^*} (I - b^{\ang{0}} (z) \alpha ^* ) ^{-1} \nn \\
& = & D_{\alpha ^*} D(z) (D(z) + z N_0 (z) \alpha ^* ) ^{-1} = D_{\alpha ^*} D(z). \nn \ea
To prove that $\mbf{b} ^{\ang{0}}$ is a contractive Gleason solution for $b^{\ang{0}}$, first observe that
\ba D_{\alpha ^*} z N_0 (z) D_\alpha & =& D_{\alpha ^*} \Ga (z) ^* z \Ga _0 (\infty) D_\alpha \nn \\
& = & (z^* k_z ^b ) ^* \mbf{b} = z \mbf{b} (z) \nn \\
& = & b(z) - b(0) = b(z) - \alpha. \nn \ea It then follows that
\ba & & M^{\ang{\alpha}} (z) z \mbf{b} ^{\ang{0}} (z) D_\alpha  =  z \mbf{b} (z) \nn \\ &\Rightarrow & \quad D_{\alpha ^*} D(z) z \mbf{b} ^{\ang{0}} (z) D_\alpha  = D_{\alpha ^* } z N_0 (z) D_\alpha \nn \\
&\Leftrightarrow & \quad  D(z) z \mbf{b} ^{\ang{0}} (z) = z N_0 (z) \nn \\
&\Leftrightarrow& \quad z \mbf{b} ^{\ang{0}} (z) = b^{\ang{0}} (z). \nn \ea 
This proves that $\mbf{b} ^{\ang{0}}$ is a Gleason solution for $b^{\ang{0}}$ and contractivity follows as before. Again, the fact that $\mbf{b} \mapsto \mbf{b} ^{\ang{0}}$ is surjective follows similarly. 
\end{proof}

\begin{prop} \label{FSGSforHb}
    Let $b \in \scr{S} _d (\mc{H} , \K)$ be a purely contractive Schur-class function, let $X =X (\mbf{b})$ be an extremal Gleason solution for $\scr{H} (b)$, and
let $\mbf{b}$ be the corresponding contractive extremal Gleason solution for $b$: $X^* k_w ^b = w^* k_w ^b - \mbf{b} b(w) ^*.$

If $\mbf{b} ^{\ang{0}} = \left( M ^{\ang{b(0)}} \otimes I_d \right) ^* \mbf{b} D_{b(0)} ^{-1}$, $X^{\ang{0}} := X ( \mbf{b} ^{\ang{0}} )$ are the corresponding
extremal Gleason solutions for $b^{\ang{0}}$ and $\scr{H} (b^{\ang{0}})$, and $X = V -C$ is the isometric-pure decomposition of $X$, then
$$ V = M ^{\ang{b(0)}} X^{\ang{0}} (M^{\ang{b(0)}} \otimes I_d ) ^*. $$
\end{prop}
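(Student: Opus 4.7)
The plan is to verify the identity by checking equality of the adjoints of both sides on the dense set of kernel vectors $\{k_w^b g\}$ with $w\in\B^d$, $g\in\K$. Set $M := M^{\ang{b(0)}}$ and $\tilde V := M X^{\ang{0}} (M\otimes I_d)^*$; both $V^*$ and $\tilde V^*$ are bounded, so equality on a dense subset suffices.

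On the one hand, Example \ref{FSeg} gives $V^* = X^*\bigl(I - k_0^b k^b(0,0)^{-1}(k_0^b)^*\bigr)$. Applying the Gleason identity $X^* k_z^b = z^* k_z^b - \mbf{b}\, b(z)^*$ at $z=w$ and $z=0$, together with $k^b(0,0) = I - b(0)b(0)^* = D_{b(0)^*}^2$, one obtains
\[V^* k_w^b g \;=\; w^* k_w^b g \;-\; \mbf{b}\,\bigl[b(w)^* - b(0)^* D_{b(0)^*}^{-2}(I - b(0)b(w)^*)\bigr]g.\]

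On the other hand, since $M$ is a unitary multiplier of $\scr{H}(b^{\ang{0}})$ onto $\scr{H}(b)$, the standard RKHS formula yields $M^* k_w^b = k_w^{b^{\ang{0}}} M(w)^*$ (so that $M k_w^{b^{\ang{0}}} M(w)^* = k_w^b$). Inverting the defining relation $\mbf{b}^{\ang{0}} = (M\otimes I)^* \mbf{b}\, D_{b(0)}^{-1}$ gives $(M\otimes I_d)\mbf{b}^{\ang{0}} = \mbf{b}\, D_{b(0)}^{-1}$. Combined with the Gleason identity $(X^{\ang{0}})^* k_w^{b^{\ang{0}}} = w^* k_w^{b^{\ang{0}}} - \mbf{b}^{\ang{0}} b^{\ang{0}}(w)^*$, these substitutions telescope to
\[\tilde V^* k_w^b g \;=\; w^* k_w^b g \;-\; \mbf{b}\, D_{b(0)}^{-1} b^{\ang{0}}(w)^* M(w)^*\, g.\]

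The proof is thus reduced to the algebraic identity
\[D_{b(0)}^{-1} b^{\ang{0}}(w)^* M(w)^* \;=\; b(w)^* - b(0)^* D_{b(0)^*}^{-2}(I - b(0)b(w)^*),\]
which is the main (though routine) obstacle. Substituting $b^{\ang{0}}(w)^* = D_{b(0)}^{-1}(b(w)^* - b(0)^*)(I - b(0)b(w)^*)^{-1} D_{b(0)^*}$ and $M(w)^* = D_{b(0)^*}^{-1}(I - b(0)b(w)^*)$ collapses the left-hand side to $D_{b(0)}^{-2}(b(w)^* - b(0)^*)$, while expanding the right-hand side and applying the commutation $b(0)^* D_{b(0)^*}^{-2} = D_{b(0)}^{-2} b(0)^*$ together with $D_{b(0)}^{-2} b(0)^* b(0) = D_{b(0)}^{-2} - I$ yields the same expression. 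This gives $V^* = \tilde V^*$ on kernel vectors, hence $V = M X^{\ang{0}}(M\otimes I_d)^*$, as claimed.
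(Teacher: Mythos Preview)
Your proof is correct and follows essentially the same approach as the paper: both verify the identity of adjoints on kernel vectors, compute $V^* k_w^b$ using $V^* = X^* P_{\ran{X}}$ and the Gleason identity, compute $\tilde V^* k_w^b$ via $M^* k_w^b = k_w^{b^{\ang{0}}} M(w)^*$, and reduce to the same algebraic simplification showing both expressions equal $w^* k_w^b - \mbf{b}\, D_{b(0)}^{-2}(b(w)^* - b(0)^*)$. The only difference is organizational---you package the final step as a single identity to be checked, whereas the paper simplifies each side independently to the common form.
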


\begin{proof}
 As in Subsection \ref{FSeg}, we have that $V^* = X^* P _{\ran{X}}$, where
$$ P _{\ran{X} } = I- k_0 ^b k^b (0,0)^{-1} (k_0 ^b )^* = I - k_0 ^b D_{b(0) ^* } ^{-2} (k_0 ^b) ^*. $$
Let $k^{\ang{0}} = k^{b^{\ang{0}}}$, we need to verify that $ M ^{\ang{b(0)}} X^{\ang{0}} (M^{\ang{b(0)}} \otimes I_d ) ^* = V$.
To prove this, check the action on point evaluation maps:
\ba V^* k_w ^b & = & w^* k_w ^b - \mbf{b} b(w) ^* + \mbf{b} b(0) ^* D _{b(0) ^* } ^{-2} (I -b(0) b(w) ^* ) \nn \\
& = & w^* k_w ^b - \mbf{b} D _{b(0)} ^{-2} \left( (I - b(0) ^* b(0) ) b(w) ^* - b(0) ^* (I - b(0) b(w) ^*) \right) \nn \\
& = & w^* k_w ^b -\mbf{b} D_{b(0)} ^{-2} (b(w) ^* - b(0) ^* ). \nn \ea
Compare this to: 
\ba (M ^{\ang{b(0)}} \otimes I_d ) (X^{\ang{0}})^* (M^{\ang{b(0)}}) ^* k_w ^b & = & (M ^{\ang{b(0)}} \otimes I_d ) (X ^{\ang{0}})^* k_w ^{b^{\ang{0}}} M^{\ang{b(0)}} (w) ^* \nn \\
& = & (M^{\ang{b(0)}} \otimes I_d ) \left( w^* k_w ^{\ang{0}} - \mbf{b} ^{\ang{0}} b^{\ang{0}} (w) ^* \right) M^{\ang{b(0)}} (w) ^* \nn \\
& = & w^* k_w ^b - \mbf{b} D _{b(0)} ^{-1} b^{\ang{0}} (w) ^* M^{\ang{b(0)}} (w) ^* \nn \\
& = & w^* k_w ^b -\mbf{b} D _{b(0)} ^{-2} (b(w) ^* - b(0) ^* ). \nn \ea

\end{proof}

\subsection{Gleason solution model for CCNC row contractions}

Let $T$ be an arbitrary CCNC row contraction on $\mc{H}$ with partial isometric-purely contractive decomposition $T = V -C$.

\begin{lemma} \label{zeropoint}
    Let $V$ be a CCNC row partial isometry on $\mc{H}$, and let $(\ga , \J _\infty , \J _0 )$ be a model triple for $V$. Given any pure
contraction $\delta  \in [ \scr{L} (\J _\infty , \J _0) ] _1$, the map
$$ \delta  \mapsto T_\delta := V - \ga (0) \delta \ga (\infty ) ^*, $$ is a bijection from pure contractions onto CCNC row contractions with partial isometric part $V$.
\end{lemma}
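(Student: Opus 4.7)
The plan is to establish, in turn, that $T_\delta$ is a row contraction whose isometric-pure decomposition (Lemma~\ref{RCdecomp}) has isometric part $V$ and pure part $C_\delta := \ga(0)\delta\ga(\infty)^*$; that $T_\delta$ is CCNC; and that the map $\delta \mapsto T_\delta$ is both injective and surjective onto the stated codomain. Since $\ga(0)$ and $\ga(\infty)$ are onto isometries, $C_\delta$ is a contraction with $\ran{C_\delta} \subseteq \ran{V}^\perp$ and $\ker{C_\delta}^\perp \subseteq \ker{V}$, so Lemma~\ref{contractext}(iii) gives that $T_\delta = V - C_\delta$ is a row contraction. The cross-terms in $T_\delta^* T_\delta$ vanish by these containments, yielding $D_{T_\delta}^2 = I - V^* V - C_\delta^* C_\delta$. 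This vanishes on $\ker{V}^\perp$ and, on $\ker{V}$, reduces to $I - C_\delta^* C_\delta$; pureness of $\delta$ transfers (via $\ga(\infty)^*|_{\ker V}$ and $\ga(0)$, both isometric) to strict positivity of the latter on non-zero vectors. Thus $\ker{D_{T_\delta}} = \ker{V}^\perp$, and the isometric part of $T_\delta$ is $T_\delta P_{\ker{V}^\perp} = V$.

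I expect the main obstacle to be verifying that $T_\delta$ is CCNC, since a priori $\ran{D_{T_\delta^*}}$ could be a proper subspace of $\ran{V}^\perp$. The key observation is that a pure contraction between Hilbert spaces has pure adjoint: $\delta^* \delta$ and $\delta \delta^*$ share non-zero eigenvalues (if $\delta^* \delta v = v$ with $v \neq 0$, then $\delta v \neq 0$ and $\delta \delta^*(\delta v) = \delta v$, and symmetrically), so $1$ is an eigenvalue of one exactly when it is an eigenvalue of the other, i.e. $\delta$ pure iff $\delta^*$ pure. A dual cross-term calculation then gives $D_{T_\delta^*}^2 = \ga(0) D_{\delta^*}^2 \ga(0)^*$, and pureness of $\delta^*$ forces $\overline{\ran{D_{\delta^*}}} = \J_\infty^{\,\prime}$ dense in $\J_0$, so that $\overline{\ran{D_{T_\delta^*}}} = \ga(0) \J_0 = \ran{V}^\perp$. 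Applying Lemma~\ref{TextVlemma} to the contractive extension $T_\delta \supseteq V$ and exploiting continuity of $(I - T_\delta z^*)^{-1}$,
$$\bigvee_{z \in \B^d} (I - T_\delta z^*)^{-1} \ran{D_{T_\delta^*}} = \bigvee_{z \in \B^d} (I - T_\delta z^*)^{-1} \ran{V}^\perp = \bigvee_{z \in \B^d} \ra{V-z}^\perp = \H,$$
the last equality being the hypothesis that $V$ is CCNC.

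Finally, injectivity follows from $\delta = \ga(0)^* C_\delta \ga(\infty)$, recovered by inserting the identities $\ga(0)^*\ga(0) = I_{\J_0}$ and $\ga(\infty)^*\ga(\infty) = I_{\J_\infty}$. For surjectivity, given any CCNC row contraction $T$ on $\H$ with partial isometric part $V$, Lemma~\ref{RCdecomp} writes $T = V - C$ with $C$ pure, $\ran{C} \subseteq \ran{V}^\perp$ and $\ker{C}^\perp \subseteq \ker{V}$. Setting $\delta := \ga(0)^* C \ga(\infty)$, the unitarity of $\ga(0)$ onto $\ran{V}^\perp$ and $\ga(\infty)$ onto $\ker{V}$ recovers $C = \ga(0)\delta\ga(\infty)^*$, and pureness of $C$ on $\ker V$ transfers to pureness of $\delta$ on $\J_\infty$; hence $T = T_\delta$.
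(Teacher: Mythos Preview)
Your proof is correct and follows essentially the same approach as the paper's, which simply asserts as ``clear'' the facts you verify in detail (in particular, that $V$ is the isometric part of $T_\delta$ and that $T_\delta$ is CCNC). The one minor slip is notational: you write ``$\overline{\ran{D_{\delta^*}}} = \J_\infty^{\,\prime}$ dense in $\J_0$'' where you presumably meant $\overline{\ran{D_{\delta^*}}} = \J_0$; the argument itself is fine.
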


\begin{proof}
    Since $\ga (0) : \J _0 \rightarrow \ran{V} ^\perp$ and $\ga (\infty) : \J _\infty \rightarrow \ker{V}$ are onto isometries, it is clear that $\delta \mapsto -\ga (0) \delta \ga (\infty ) ^*$ maps
pure contractions $\delta \in \scr{L} ( \J _\infty , \J _0 )$ onto all pure contractions in $\scr{L} ( \ker{V}  , \ran{V} ^\perp )$. It is also clear that $T_\delta$ is CCNC if and only if $V$ is, and that $V$ is the partial isometric
part of $T_\delta$. Conversely, given any $CCNC$ row contraction $T$ on $\mc{H}$ such that $T = V-C$, we have that $\delta := - \ga (0) ^* T \ga (\infty) =  \ga (0) ^* C \ga (\infty)$. Since $C$ is a pure row contraction,
$\delta$ is a pure contraction and $T = T_\delta$.
\end{proof}

\begin{defn} \label{fullchardef}
 Let $T :\mc{H} \otimes \C ^d \rightarrow \mc{H}$ be a CCNC row contraction with partial isometeric-purely contractive decomposition $T = V-C$. For any fixed model triple $(\ga , \J _\infty , \J _0 )$ of $V$, define
$$ \delta _T ^\ga := - \ga (0) ^* T \ga (\infty ) = \ga (0) ^* C \ga (\infty ), $$ the \emph{zero-point contraction} of $T$.
The \emph{characteristic function}, $b_T$, of $T$, is then any Schur-class function in the weak coincidence class of the $\delta _T ^\ga$-Frostman shift of $b_V ^\ga$,
 \ba b_T ^\ga  &:=&  (b_V ^\ga ) ^{\ang{\delta _T ^\ga}} \in \scr{S} _d (\J _\infty , \J _0 ), \nn \\
 b_T ^\ga (z) & = & D_{(\delta _T ^\ga) ^*} ^{-1} \left( b_V ^\ga (z) + \delta _T ^\ga \right) \left( I + (\delta _T ^\ga )^* b_V ^\ga (z) \right) ^{-1} D_{\delta _T ^\ga}; \quad \quad z \in \B ^d. \nn \ea Since $C$ is a pure row contraction, it follows that $\delta _T ^\ga$ is always a pure contraction, and that $b_T ^\ga$ is purely contractive on the ball, by Lemma \ref{ballauto}.
\end{defn}

\begin{lemma}
    The coincidence class of $b_T ^\ga $ is invariant under the choice of model triple $(\ga , \J_\infty , \J _0)$.
\end{lemma}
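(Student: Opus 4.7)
The plan is to mimic Lemma \ref{modmapinv}: find unitaries that implement a coincidence of $b_V^\ga$ and $b_V^\delta$, check that those same unitaries intertwine the zero-point contractions $\delta_T^\ga$ and $\delta_T^\delta$, and then verify that the Frostman-shift formula is equivariant. Fix two model triples $(\ga,\J_\infty,\J_0)$ and $(\delta,\K_\infty,\K_0)$ for the partial-isometric part $V$ of $T$, and set
\begin{equation*}
R := \delta(0)^* \ga(0) \in \scr{L}(\J_0, \K_0), \qquad Q := \delta(\infty)^* \ga(\infty) \in \scr{L}(\J_\infty, \K_\infty).
\end{equation*}
Since $\ga(0),\delta(0)$ are onto isometries onto $\ran{V}^\perp$ and $\ga(\infty),\delta(\infty)$ are onto isometries onto $\ker{V}$, both $R$ and $Q$ are unitary. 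Reworking the proof of Lemma \ref{modmapinv} in these coordinates produces the intertwining $R\, b_V^\ga(z) = b_V^\delta(z)\, Q$ for all $z \in \B^d$.

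Next I would verify that the same pair $(R,Q)$ intertwines the zero-point contractions. Using $\delta(0)\delta(0)^* = P_{\ran{V}^\perp}$, $\delta(\infty)\delta(\infty)^* = P_{\ker V}$, $\ga(0)^* P_{\ran V^\perp} = \ga(0)^*$ and $P_{\ker V}\ga(\infty) = \ga(\infty)$, a one-line computation yields
\begin{equation*}
R\, \delta_T^\ga = -\delta(0)^* \ga(0)\ga(0)^* T\, \ga(\infty) = -\delta(0)^* T\, \ga(\infty) = -\delta(0)^* T\, \delta(\infty)\delta(\infty)^* \ga(\infty) = \delta_T^\delta\, Q,
\end{equation*}
so that $\delta_T^\delta = R\,\delta_T^\ga\,Q^*$ in parallel with $b_V^\delta = R\,b_V^\ga\,Q^*$. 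Unitarity of $R$ and $Q$ then yields
\begin{align*}
D_{(\delta_T^\delta)^*}^{-1} &= R\, D_{(\delta_T^\ga)^*}^{-1}\, R^*, & D_{\delta_T^\delta} &= Q\, D_{\delta_T^\ga}\, Q^*, \\
b_V^\delta + \delta_T^\delta &= R(b_V^\ga + \delta_T^\ga)Q^*, & \bigl(I + (\delta_T^\delta)^* b_V^\delta\bigr)^{-1} &= Q\bigl(I + (\delta_T^\ga)^* b_V^\ga\bigr)^{-1} Q^*.
\end{align*}

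Substituting these four identities into the defining formula
\begin{equation*}
b_T^\delta(z) = D_{(\delta_T^\delta)^*}^{-1}\bigl(b_V^\delta(z) + \delta_T^\delta\bigr)\bigl(I + (\delta_T^\delta)^* b_V^\delta(z)\bigr)^{-1} D_{\delta_T^\delta},
\end{equation*}
the interior $R^*R$ and $Q^*Q$ factors collapse to the identity, leaving $b_T^\delta(z) = R\, b_T^\ga(z)\, Q^*$, i.e.\ $R\, b_T^\ga(z) = b_T^\delta(z)\, Q$. This is exactly the coincidence of $b_T^\ga$ and $b_T^\delta$, witnessed by the unitaries $R \in \scr{L}(\J_0,\K_0)$ and $Q \in \scr{L}(\J_\infty,\K_\infty)$. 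The only real difficulty is the bookkeeping with the four auxiliary spaces $\J_0, \J_\infty, \K_0, \K_\infty$ and the composition directions of $R$ and $Q$; no analytic input beyond unitarity is needed, since $\delta_T^\ga$ is, by construction, $T$ sandwiched between the images of $\ga(0)$ and $\ga(\infty)$.
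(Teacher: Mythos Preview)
Your proof is correct and follows essentially the same approach as the paper: both arguments invoke Lemma \ref{modmapinv} to obtain unitaries intertwining $b_V^\ga$ with $b_V^\delta$, verify by a one-line computation that the same unitaries intertwine the zero-point contractions $\delta_T^\ga$ and $\delta_T^\delta$, and then check that the Frostman-shift formula $b \mapsto b^{\ang{\alpha}}$ is equivariant under simultaneous unitary conjugation of $b$ and $\alpha$. The only cosmetic slip is that the auxiliary identities you list (e.g.\ $\ga(0)^* P_{\ran V^\perp} = \ga(0)^*$) are not quite the ones your displayed computation actually uses---what you need there is $\delta(0)^*\ga(0)\ga(0)^* = \delta(0)^* P_{\ran V^\perp} = \delta(0)^*$ and $P_{\ker V}\ga(\infty)=\ga(\infty)$---but the computation itself is correct.
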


\begin{proof}
Let $(\ga , \J _\infty , \J _0)$ and $( \varphi , \K _\infty , \K _0)$ be two model triples for $V$, $T = V-C$. By Lemma \ref{modmapinv} we know that there are onto isometries $R : \K _\infty \rightarrow \J _\infty$
and $Q^*  : \K _0 \rightarrow \J _0$ so that $\varphi (\infty ) = \ga (\infty ) R$, $\varphi (0) = \ga (0) Q^*$, and
$$ b_V ^\varphi = Q b_V ^\ga R. $$ Similarly,
$$ \delta _T ^\varphi = - \varphi (0) ^* T \varphi (\infty ) = Q \delta _T ^\ga R. $$ Finally, we calculate
\ba b_T ^\varphi & = & (Q b_V ^\ga R) ^{\ang{Q \delta _T ^\ga R} } \nn \\
& = & D^{-1} _{ (Q\delta _T ^\ga R) ^* } \left( Q (b_V ^\ga + \delta _T ^\ga ) R \right) \left( I + R^* (\delta _T ^\ga ) ^* Q ^*  Q b_V ^\ga R \right) ^{-1} D _{Q \delta _T ^\ga R } \nn \\
& = & Q D_{(\delta _T ^\ga ) ^* } ^{-1} Q^* Q \left( b_V ^\ga + \delta _T ^\ga \right) R R^* \left( I + (\delta _T ^\ga) ^* b_V ^\ga \right) ^{-1} R R^* D_{\delta _T ^\ga } R \nn \\
& = & Q (b_V ^\ga ) ^{\ang{\delta _T ^\ga}} R = Q b_T ^\ga R. \nn \ea
\end{proof}

\begin{lemma}{ (Extremal Gleason solutions)} \label{extGScharfun}
Let $b \in \scr{S} _d (\mc{H} , \K )$ be a Schur-class multiplier. If $X$ is any extremal Gleason solution for $\scr{H} (b)$, then
the characteristic function, $b_X$, of $X$, coincides weakly with $b$.
\end{lemma}

\begin{proof}
Given any purely contractive $b \in \scr{S} _d (\mc{H} , \K)$, Lemma \ref{GSCCNC} proved that any contractive extremal Gleason solution, $X$, for $\scr{H} (b)$
is a CCNC row contraction. Using the model constructed as in Subsection \ref{FSeg}, we will now show that the characteristic function, $b_X$, of $X$, coincides weakly with $b$.

As discussed in Subsection \ref{Gleasonsub}, $X = X (\mbf{b})$ for an extremal Gleason solution $\mbf{b}$ for $b$, where $X (\mbf{b})$ is given by Formula (\ref{GSHbb}). Let $\mbf{b} ^{\ang{0}} := (M^{\ang{b(0)}} \otimes I_d ) ^* \mbf{b} D_{b(0)} ^{-1}$ be the extremal Gleason solution for $b ^{\ang{0}}$ which corresponds uniquely to $\mbf{b}$ as in Theorem \ref{FSGSbij}, and let $X=V-C$ be the isometric-pure decomposition of $X$. Proposition \ref{FSGSforHb} then implies that if $X ^{\ang{0}} := X (\mbf{b} ^{\ang{0}} )$ is the corresponding extremal Gleason solution for $\scr{H} (b^{\ang{0}})$, that $V = M^{\ang{b(0)}} X ^{\ang{0}} (M ^{\ang{b(0)}} \otimes I_d ) ^*$.

As in Subsection \ref{FSeg}, it then follows that we can define an analytic model triple for $V$ as follows. Let $\Ga (0) := k_0 ^b k^b (0,0) ^{-1/2} = k_0 ^b D_{b(0) ^*} ^{-1}$, $\Ga (z) := (I -X z^* ) ^{-1} \Ga (0) = k_z ^b D_{b(0) ^*} ^{-1}$, and $\Ga (\infty ) := \mbf{b} D_{b(0)} ^{-1} \oplus \Ga (\infty ) '$, where
$\Ga (\infty ) ' : \K ' \rightarrow \ker{V} \ominus \ran{\mbf{b}}$ is an arbitrary onto isometry (that $\mbf{b}$ maps into $\ker{V}$ follows as in Lemma \ref{kerViso}). As in Subsection \ref{FSeg}, $(\Ga , \K \oplus \K ' , \K)$ is an analytic model triple for $V$, and we will use this triple to compute the characteristic function $b_X ^\Ga$. Using the relationship between $V$ and $X^{\ang{0}}$, it is easy to check that $b_V ^\Ga = b ^{\ang{0}} \oplus 0 _{\K ' }$ as in Subsection \ref{FSeg}, and it remains
to check that $\delta _X ^\Ga = b(0) \oplus 0 _{\K '}$:
\ba \delta _X ^\Ga & = & - (k_0 ^b D_{b(0)^*} ^{-1} ) ^* X \Ga (\infty ) \nn \\
& = & - (X^* k_0 ^b D_{b(0) ^* } ^{-1} ) ^* \mbf{b} D_{b(0)} ^{-1} \oplus \Ga (\infty) ' \nn \\
& = & D_{b(0) ^* } ^{-1} b(0) \mbf{b} ^*  \left(  \mbf{b} D_{b(0)} ^{-1} \oplus \Ga (\infty) ' \right) \nn \\
& = & b(0) \oplus 0 _{\K ' }. \nn \ea We conclude that $b_X ^\Ga := (b_V ^\Ga ) ^{\ang{\delta _X ^\Ga }}$ coincides weakly with $b = (b^{\ang{0}}) ^{\ang{b(0)}}$.
\end{proof}

We are now sufficiently prepared to prove one of our main results:
\begin{thm} \label{main1}
    A row contraction $T : \mc{H} \otimes \C ^d \rightarrow \mc{H}$ is CCNC if and only if $T$ is unitarily equivalent to an extremal Gleason solution $X^b$ acting on a multi-variable de Branges-Rovnyak space $\scr{H} (b)$
for a purely contractive Schur-class $b \in \scr{S} _d (\J , \K )$.

If $T \simeq X^b$, the characteristic function, $b_T :=b$ is a unitary invariant: if two CCNC row contractions $T_1, T_2$ are unitarily equivalent, then their characteristic functions $b_{T_1}, b_{T_2}$ coincide weakly. One can choose $b_T \in \scr{S} _d (\ran{D_T} , \ran{D_{T^*}} )$.
\end{thm}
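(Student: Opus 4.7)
The direction ($\Leftarrow$) is immediate from Example \ref{GSCCNC}, which shows every extremal contractive Gleason solution on a deBranges-Rovnyak space is CCNC. For ($\Rightarrow$), begin with the isometric-pure decomposition $T = V - C$ of Lemma \ref{RCdecomp}. The preliminary observation is that $V$ itself is CCNC: since $\ran{C} \subseteq \ran{V}^\perp$ and $\ker{C}^\perp \subseteq \ker{V}$, one obtains $V^*C = 0 = C^*V$, whence $D_{T^*}^2 = P_{\ran{V}^\perp} - CC^*$. Purity of $C$ on its initial space then forces $\overline{\ran{D_{T^*}}} = \ran{V}^\perp$, and combining this with Lemma \ref{TextVlemma}, which yields $(I-Tz^*)^{-1}\ran{V}^\perp = \ra{V-z}^\perp$, transfers the CCNC hypothesis from $T$ to $V$ verbatim.

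The bulk of the work is constructing $b$ and the unitary equivalence $T \simeq X^b$. Apply Theorem \ref{Glerep2} to the CCNC row partial isometry $V$ using the analytic model triple $(\Ga_T, \J_\infty, \J_0)$ with $\Ga_T(z) := (I - Tz^*)^{-1} \Ga_T(0)$; this produces a canonical unitary $U^{\Ga_T}\colon \H \to \scr{H}(b_V^{\Ga_T})$ conjugating $V$ to the extremal contractive Gleason solution of $\scr{H}(b_V^{\Ga_T})$. To transport $T$ itself, compute the zero-point contraction $\delta := \delta_T^{\Ga_T} = -\Ga_T(0)^* T \Ga_T(\infty)$, set $b := (b_V^{\Ga_T})^{\ang{\delta}}$ (a purely contractive Schur function with $b(0) = \delta$ and $b^{\ang{0}} = b_V^{\Ga_T}$), and compose $U^{\Ga_T}$ with the unitary Crofoot multiplier $M^{\ang{\delta}}\colon \scr{H}(b_V^{\Ga_T}) \to \scr{H}(b)$ to obtain $W := M^{\ang{\delta}} \circ U^{\Ga_T}$. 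The claim $W T W^{-1} = X^b$ (the extremal contractive Gleason solution for $\scr{H}(b)$) is the main technical burden: verification amounts to testing both sides against the kernel vectors $k_w^b \in \scr{H}(b)$ and, via the identity $(X^b)^* k_w^b = w^* k_w^b - \mbf{b}\, b(w)^*$, reducing to the explicit computation in Example \ref{FSeg} and Proposition \ref{FSGSforHb} run in reverse; consistency with Example \ref{extGScharfun} is automatic once the equivalence is in place. The main obstacle I anticipate is the bookkeeping around the auxiliary summand $\K'$ in $\J_\infty$ (as introduced in Example \ref{FSeg} whenever $\dim \ker V$ exceeds the rank naturally produced by $\mbf{b}$): one must verify that the Frostman-shifted characteristic function vanishes on this summand and that $W$ correctly identifies it with the corresponding subspace on which the Gleason model is degenerate.

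For unitary invariance, suppose $W_0\colon \H_1 \to \H_2$ implements $T_2 W_0 = W_0 T_1$. Because the isometric-pure decomposition is canonical (the subspaces $\ker D_T$ and $\overline{\ran D_{T^*}}$ are unitary invariants), $W_0$ simultaneously intertwines $V_1 \leftrightarrow V_2$ and $C_1 \leftrightarrow C_2$. Given any model triple $(\Ga_1, \J_\infty, \J_0)$ for $V_1$, its pushforward $\Ga_2(z) := W_0 \Ga_1(z)$, $\Ga_2(\infty) := W_0 \Ga_1(\infty)$ is a model triple for $V_2$ using the same auxiliary spaces, and direct substitution gives $b_{V_1}^{\Ga_1} = b_{V_2}^{\Ga_2}$ and $\delta_{T_1}^{\Ga_1} = \delta_{T_2}^{\Ga_2}$ as elements of $\scr{L}(\J_\infty, \J_0)$. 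The Frostman shift formula then yields $b_{T_1}^{\Ga_1} = b_{T_2}^{\Ga_2}$, so by the model-map invariance of $b_T$ established immediately before the theorem the characteristic functions coincide, and \emph{a fortiori} coincide weakly.
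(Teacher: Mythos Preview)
Your proposal is correct and follows essentially the same route as the paper: Example \ref{GSCCNC} for the reverse implication, the isometric-pure decomposition $T=V-C$, Theorem \ref{Glerep2} applied to $V$ to land in $\scr{H}(b_V^\Ga)$, and then the Frostman shift/Crofoot multiplier $M^{\ang{\delta}}$ to pass to $\scr{H}(b_T^\Ga)$; the unitary-invariance argument by pushing forward the model triple is identical. The one point where the paper is cleaner is the verification that $W T W^{-1}=X^b$: rather than testing against kernel vectors directly, the paper observes that both $U^\Ga T (U^\Ga)^*$ and $(M^{\ang{b_T(0)}})^* X^T M^{\ang{b_T(0)}}$ have the \emph{same} partial isometric part $X$, so by Lemma \ref{zeropoint} it suffices to check that their zero-point contractions agree, which is a short computation showing both equal $\delta_T^\Ga=b_T^\Ga(0)$. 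Your $\K'$ worry is not actually an obstacle here, since in this direction $b_T^\Ga$ is built from the full $\J_\infty\simeq\ker V$ to begin with and the weak-coincidence framework absorbs any trivial summand.
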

In the above $\simeq$ denotes unitary equivalence. Recall that as shown in Subsection \ref{Gleg}, the characteristic function of a CCNC row contraction is not a complete unitary invariant: there exist CCNC row contractions $T_1, T_2$ which have the same characteristic function but are not unitarily equivalent.
\begin{proof}
We have already proven that any extremal Gleason solution, $X$, for $\scr{H} (b)$, $b \in \scr{S} _d (\J , \K)$ is a CCNC row contraction with characteristic function coinciding weakly with $b$ in Lemma \ref{GSCCNC} and Lemma \ref{extGScharfun} above. Conversely, let $T$ be a CCNC row contraction on $\mc{H}$ with isometric-contractive decomposition $T=V-C$.

Let $(\Ga , \J _\infty , \J _0)$ be the analytic model triple
$\Ga = \Ga _V$ for $V$. Recall that $\J _\infty \simeq \ker{V} \simeq \ran{D_T}$ and $\J _0 \simeq \ran{V} ^\perp \simeq \ran{D_{T^*}}$ ($\simeq$ means isomorphic as Hilbert spaces, \emph{i.e.} they have the same dimension). By Theorem \ref{Glerep2}, the unitary $U  ^\Ga := (M^\Ga ) ^{-1}  \hat{U } ^\Ga : \mc{H} \rightarrow \scr{H} (b_V ^\Ga )$, where recall $M^\Ga = M_{D^\Ga } $, a unitary multiplier, is such that $X := U^\Ga V (U^\Ga)^*  = X (\mbf{b} ^\Ga )$ is an extremal Gleason solution for $\scr{H} (b_V ^\Ga )$ corresponding to
the extremal Gleason solution $\mbf{b} ^\Ga = U ^\Ga \Ga (\infty )$ for $b_V ^\Ga$. It is clear that $(U ^\Ga \Ga , \J _\infty , \J _0 )$ is then an analytic model triple for $X$.

By Theorem \ref{FSGSbij}, since $b_V ^\Ga = \Phi _{b_T ^\Ga (0)} (b_T ^\Ga ) = (b_T ^\Ga) ^{\ang{0}} $ is the $0$-Frostman shift of $b_T$, there is a unique extremal Gleason solution $\mbf{b} ^T$ for $b_T = b_T ^\Ga$ so that
$$ \mbf{b} ^\Ga := (M ^{\ang{b_T(0)}} \otimes I ) ^{-1} \mbf{b} ^T D_{b _T(0)} ^{-1}, $$ where, by definition, $b_T (0) = b_T ^\Ga (0) = \delta _T ^\Ga$. Proposition \ref{FSGSforHb} then implies that if $X ^T := X (\mbf{b} ^T )$ is the corresponding extremal Gleason solution for $b_T$ with isometric-pure decomposition $X^T = V^T -C^T$, then $\wt{X} := (M^{\ang{b_T (0)}}) ^* X^T (M^{\ang{b_T (0)}} \otimes I  _d)  = X - \wt{C}$ where
$(M^{\ang{b_T (0)}}) ^* V^T (M^{\ang{b_T (0)}} \otimes I_d) =X$.

Our goal is to prove that $T' = U ^\Ga T (U ^\Ga) ^* \otimes I_d =: X - C'$ is equal to $\wt{X} = (M^{\ang{b(0)}}) ^* X^T M^{\ang{b_T (0)}} \otimes I  _d  = X - \wt{C}$ so that
$T \simeq X^T$, \emph{i.e.} $T$ is unitarily equivalent to the extremal Gleason solution $X^T$ for $\scr{H} (b_T ^\Ga )$. By Lemma \ref{zeropoint}, it suffices to show that $\delta _{T'} ^{U^\Ga \Ga} = \delta _{\wt{X}} ^{U^\Ga \Ga}$.
Let $b := b_T ^\Ga$, $b_V := b_V ^\Ga$, and calculate
\ba \delta _{T'} ^{U^\Ga \Ga} & = & - \Ga (0) ^* (U^{\Ga }) ^*  T' (U ^\Ga \otimes I_d ) \Ga (\infty )  \nn \\
& = & -\Ga (0) ^* T \Ga (\infty ) = \delta _T ^\Ga = b_T ^\Ga (0). \nn \ea
Similarly, since $X = X (\mbf{b} ^\Ga )$ where $\mbf{b} ^\Ga = U ^\Ga \Ga (\infty)$,
\ba \delta _{\wt{X} } ^{U ^\Ga \Ga } & = & - (U^\Ga \Ga (0) ) ^* \wt{X} \mbf{b} ^\Ga \nn \\
& = & - ( M _{D ^\Ga } ^* \hat{K} _0 ^\Ga ) ^* (M^{\ang{b(0)}}) ^* X ^T (M^{\ang{b_T (0)}} \otimes I_d )  (M^{\ang{b _T (0)}} \otimes I ) ^{-1} \mbf{b} ^T D_{b _T(0)} ^{-1} \nn \\
& = & - \left((M^{\ang{b _T (0) }}) ^{-1} ) ^* k_0 ^{b_V} \right) ^*  X ^T \mbf{b} ^T D _{b_T (0) } ^{-1}  \nn \\
& = & - \left( (X^T) ^* k_0 ^{b_T} (M^{\ang{b_T (0)}} (0) ^{-1}) ^* \right) ^*   \mbf{b} ^T D _{b_T (0) } ^{-1}  \nn \\
& = &  M^{\ang{b_T (0)}} (0) ^{-1} b(0) (\mbf{b} ^T) ^* \mbf{b} ^T  D _{b_T (0) } ^{-1} \nn \\
& = & D _{b_T (0) ^*}  (I -b(0) b(0) ^* ) ^{-1} b(0) (I - b(0) ^* b(0) ) D _{b_T (0) } ^{-1} \nn \\
& = & b_T (0) = \delta _T ^\Ga. \nn \ea
Finally, if $T_1, T_2$ are two unitarily equivalent row contractions, $T_2 = U T_1 U^*$ for a unitary $U:\mc{H} _1 \rightarrow \mc{H} _2$, and $T_k = V_k -C _k$, then given any analytic model triple $(\Ga , \J _\infty , \J _0)$ for $T_1$, $(U \Ga , \J _\infty , \J _0 )$ is an analytic model triple for $T_2$, and $b_{T_1} ^\Ga = b_{T_2} ^{U\Ga}$.
\end{proof}

We conclude this section with the observation that our characteristic function, $b_T$, for any CCNC row contraction, $T$, coincides weakly with the Sz.-Nagy-Foia\c{s}-type characteristic function  of $T$, as defined for CNC $d-$contractions in \cite{BES2006cnc}:
$$ \Theta _T (z) := \left( -T + z D _{T^*} (I -z T^* ) ^{-1} D_T \right) | _{\ran{D_T}} \in \scr{L} (\ran{D_T} , \ran{D_{T^*}});  \quad z \in \B ^d. $$

\begin{prop} \label{NFprop}
    Let $T$ be a CCNC row contraction.  The characteristic function, $b_T$, of $T$, coincides weakly with the Nagy-Foia\c{s} characteristic function,
$\Theta _T$.
\end{prop}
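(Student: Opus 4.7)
The plan is to reduce to the case of an extremal Gleason solution and then verify the weak coincidence by direct comparison of the deBranges-Rovnyak kernels of $\Theta_T$ and $b_T$.

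Both characteristic functions are invariants of the unitary equivalence class of $T$: for $b_T$ this is the last statement of Theorem~\ref{main1}, and for $\Theta_T$ it is immediate from the naturality of the Julia colligation $\left(\begin{smallmatrix} T^* & D_T \\ D_{T^*} & -T \end{smallmatrix}\right)$ and its transfer function. Hence, using Theorem~\ref{main1} to pass to an extremal contractive Gleason solution $X = X(\mbf{b})$ for $\scr{H}(b)$ with $b := b_T$, and Example~\ref{extGScharfun} to identify $b$ weakly with $b_X$, it suffices to show that for any purely contractive $b \in \scr{S}_d(\J,\K)$ and any extremal contractive Gleason solution $X$ for $\scr{H}(b)$, the Nagy-Foia\c{s} characteristic function $\Theta_X$ coincides weakly with $b$.

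To prove this, I would compute the two deBranges-Rovnyak kernels. The Julia colligation $\left(\begin{smallmatrix} X^* & D_X \\ D_{X^*} & -X \end{smallmatrix}\right) : \H \oplus \ov{\ran{D_X}} \to (\H \otimes \C^d) \oplus \ov{\ran{D_{X^*}}}$ is a unitary colligation (by direct check using $X D_X = D_{X^*} X$ and its adjoint), and $\Theta_X$ is its transfer function; standard transfer-function / Schur-complement algebra in the Drury-Arveson setting then gives
\[
k^{\Theta_X}(z,w) \;=\; D_{X^*}\,(I - zX^*)^{-1}\,(I - Xw^*)^{-1}\,D_{X^*}\bigl|_{\ov{\ran{D_{X^*}}}}, \qquad z,w \in \B^d.
\]
On the other hand, the Gleason property~(\ref{Gleker}) gives $k_z^b = (I - Xz^*)^{-1} k_0^b$, so taking inner products yields
\[
k^b(z,w) \;=\; (k_0^b)^*\,(I - zX^*)^{-1}\,(I - Xw^*)^{-1}\,k_0^b.
\]

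To link these two kernels I would use the polar decomposition of $k_0^b$. Extremality of $X$ gives $D_{X^*}^2 = I - X X^* = k_0^b (k_0^b)^*$, while $(k_0^b)^* k_0^b = k^b(0,0) = D_{b(0)^*}^2$. Since $b$ is purely contractive so is $b(0)^*$ (because $T$ implements a norm-preserving bijection $\ker(I - T^*T) \to \ker(I - TT^*)$), whence $D_{b(0)^*}$ has trivial kernel and dense range. The partial-isometric factor $V_0$ in the polar decomposition $k_0^b = V_0 D_{b(0)^*}$ is therefore an onto isometry $V_0 : \K \to \ov{\ran{k_0^b}} = \ov{\ran{D_{X^*}}}$, and uniqueness of the positive square root yields $D_{X^*} = V_0 D_{b(0)^*} V_0^*$. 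Substituting these identities into the two kernel formulas (and using $V_0^* V_0 = I_\K$) gives
\[
V_0\, k^b(z,w)\, V_0^* \;=\; k^{\Theta_X}(z,w)
\]
as operators on $\ov{\ran{D_{X^*}}}$, which by the kernel form of weak coincidence (\cite[Lemma 2.5]{BES2006cnc}, recorded after Remark~\ref{qmodelmaps}) is precisely the statement that $\Theta_X$ coincides weakly with $b$.

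The main obstacle I anticipate is the kernel identity for $\Theta_X$ in the several-variable setting; although it follows from standard unitary-colligation transfer-function algebra and Julia's identity, one must keep careful track of how the row operator $z : \H \otimes \C^d \to \H$ composes with $X^*$, $D_X$, $D_{X^*}$ and the resolvent $(I - zX^*)^{-1} : \H \to \H$ so that every factor in the kernel formula lands in the correct space.
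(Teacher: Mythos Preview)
Your argument is correct and takes a genuinely different route from the paper's proof. Both proofs share the same reduction step (pass via Theorem~\ref{main1} and Example~\ref{extGScharfun} to an extremal Gleason solution $X$ for a purely contractive $b$), but then diverge. The paper fixes the explicit model triple $(\Ga,\K\oplus\K',\K)$ of Example~\ref{extGScharfun}, computes $\Theta_X(z)$ directly term by term (in particular expressing $D_{X^*}$ as $k_0^b D_{b(0)^*}^{-1}(k_0^b)^*$ and evaluating $\sqrt{I-X^*X}\,\Ga(\infty)$ via the functional calculus using the explicit form of the pure part $C$), and arrives at $\Theta_X(z)\simeq b(z)\oplus 0_{\K'}$. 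Your approach instead compares the two deBranges--Rovnyak kernels: the standard unitary-colligation identity $k^{\Theta_X}(z,w)=D_{X^*}(I-zX^*)^{-1}(I-Xw^*)^{-1}D_{X^*}$ is matched against $k^b(z,w)=(k_0^b)^*(I-zX^*)^{-1}(I-Xw^*)^{-1}k_0^b$ (from (\ref{Gleker})) by the polar factorization $k_0^b=V_0 D_{b(0)^*}$, yielding $V_0\,k^b(z,w)\,V_0^*=k^{\Theta_X}(z,w)$ and hence weak coincidence by \cite[Lemma~2.5]{BES2006cnc}. Your route is cleaner and avoids the somewhat delicate functional-calculus computation of $\sqrt{I-X^*X}\,\Ga(\infty)$, at the price of invoking the transfer-function kernel identity as a black box; the paper's proof is more self-contained within the model-triple machinery developed earlier, and has the minor bonus of exhibiting $\Theta_X$ explicitly rather than only its kernel.
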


\begin{proof}
It suffices to show that given any purely contractive $b \in \scr{S} _d (\J , \K)$, and any contractive, extremal Gleason solution, $X$, for $\scr{H} (b)$
that $b_X =: b$ coincides weakly with $\Theta _X$. Let $\mbf{b}$ be a contractive extremal Gleason solution for $b$ so that $X = X (\mbf{b} )$.

As in the proof of Lemma \ref{extGScharfun}, consider the analytic model triple $(\Ga , \K \oplus \K ' , \K )$ where
$\Ga (0) = k_0 ^b D_{b(0) ^*} ^{-1}$, $\Ga (z) := (I- X z^* ) ^{-1} \Ga (0)$, and $\Ga (\infty ) = \mbf{b} D_{b(0)} ^{-1} \oplus \Ga (\infty ) '$, where if
$X = V-C$ is the isometric-pure decomposition of $X$, then $\Ga (\infty ) ' : \K ' \rightarrow \ker{V} \ominus \ran{\mbf{b}}$ is an onto isometry. Since $\Ga (0), \Ga (\infty )$ are onto isometries, $\Theta _X$ coincides with
$$ -\Ga (0) ^* X \Ga (\infty ) + \Ga (0) ^* z D_{X^*} (I -zX^* ) ^ {-1} D_X \Ga (\infty), $$ and $\delta _X ^\Ga = - \Ga (0) ^* X \Ga (\infty ) = b(0) \oplus 0 _{\K  '}$ as in the proof of Lemma \ref{extGScharfun}. Since $X$ is extremal, one can verify (by uniqueness of the positive square root) that $$ D_{X^*}  = \sqrt{k_0 ^b (k_0 ^b)^*} = k_0 ^b D_{b(0) ^*} ^{-1} (k_0 ^b )^*, $$ and it follows that
 \ba \Ga (0) ^* z D_{X^*} (I -zX^* ) ^ {-1} D_X \Ga (\infty) & = & D_{b(0) ^*} ^{-1} (k_0 ^b ) ^* k_0 ^b D_{b(0) ^*} ^{-1} (z^* k_z ^b )^* \sqrt{I-X^*X} \Ga (\infty) \nn \\
 & = & (z^* k_z ^b ) ^* \sqrt{I -X^* X} \Ga (\infty ). \nn \ea Since $\Ga (\infty )$ is an isometry onto $\ker{V}$, and $X^*X = V^*V + C ^* C$, it follows that
$$ \sqrt{I -X^* X } \Ga (\infty) = \sqrt{P _{\ker{V}} - P_{\ker{V}} C^*C P_{\ker{V}} } \Ga (\infty).$$ Moreover, in Subsection \ref{FSeg}, we calculated that
$$ V^* = X^* (I - k_0 ^b D_{b(0) ^*} ^{-1} (k_0 ^b) ^* ), $$ and it follows from this that
$$ C^* = \mbf{b} D_{b(0)} ^{-1} b(0) ^* D_{b(0) ^*} ^{-1} (k_0 ^b ) ^*.$$ In particular, it follows that
$$ (C^* C ) ^k = \mbf{b} D_{b(0)} ^{-1} \left( b(0) ^* b(0)  \right) ^k D_{b(0)} ^{-1} \mbf{b} ^*; \quad \quad k \in \N, $$ and the functional calculus then implies that
$$   \sqrt{I -X^* X } \Ga (\infty) = \mbf{b} D_{b(0)} ^{-1} \sqrt{ I - b(0) ^* b(0) } D_{b(0)} ^{-1} \mbf{b} ^* = \mbf{b} D_{b(0)} ^{-1} \mbf{b} ^*. $$
In conclusion we obtain
\ba \Theta _X (z) & \simeq & b(0) \oplus 0 _{\K '} + (z ^* k_z ^b ) ^* \mbf{b} D_{b(0)} ^{-1} \mbf{b} ^* \mbf{b} D_{b(0)} ^{-1} \oplus 0 _{\K '} \nn \\
& = & b(0) \oplus 0 _{\K ' } + (b(z) - b(0)) \oplus 0 _{\K '} = b(z) \oplus 0 _{\K '}, \nn \ea and $\Theta _X$ coincides weakly with $b = b_X$.
\end{proof}

\section{QE row contractions} \label{QErowsect}

In this section we focus on the sub-class of quasi-extreme (QE) row contractions. This is the set of all CCNC row contractions, $T$, whose characteristic function $b_T$ coincides weakly with a quasi-extreme Schur multiplier as defined and studied in \cite{Jur2014AC,JM,JMqe}. We will see that the characteristic function is a complete unitary invariant for QE row contractions.

\subsection{Quasi-extreme Schur multipliers} \label{QEsect}

As discussed in the introduction, the concept of a \emph{quasi-extreme} Schur-class multiplier was introduced in \cite{Jur2014AC,JM}, as a several-variable
analogue of a `Szeg\"{o} approximation property' that is equivalent to being an extreme point of the Schur class in the single-variable, scalar-valued setting (see \emph{e.g.} \cite{JMqe}).

In \cite{JM}, the quasi-extreme property was defined for any non-unital and square $b \in \scr{S} _d (\mc{H})$ (recall from Subsection \ref{HerglotzSection} that the non-unital assumption is needed to ensure that the corresponding Herglotz-Schur function $H_b$ takes values in bounded operators), but we will require the extension of this property to arbitrary purely contractive and `rectangular' $b \in \scr{S} _d (\J , \K)$. For this purpose it will be useful to consider the square extension, $[b]$, of any $b \in \scr{S} _d (\J , \K)$, as defined in Subsection \ref{Gleasonsub}.

\begin{thm} \label{equivqe}
Given any $b \in \scr{S} _d (\J , \K )$ such that $[b]$ is non-unital, the following are equivalent:
\bn
    \item[(i)] $b$ has a unique contractive Gleason solution and this solution is extremal.
    \item[(ii)] $\mr{supp} (b) = \J$, $\scr{H} (b)$ has a unique contractive Gleason solution, and this solution is extremal.
    \item[(iii)] There is no non-zero $g \in \J$ so that $b g \in \scr{H} (b)$.
    \item[(iv)] There is no non-zero $\J -$valued constant function $F \equiv g \in \scr{H} ^+ (H_{[b]} )$, $g \in \J$.
    \item[(v)] $K_0 ^{[b]} (I - b (0) ) \J \subseteq \ran{V^{[b]}}$.
\en
\end{thm}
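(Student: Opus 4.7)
The plan is to reduce to the square, non-unital case via the square extension $[b]$ of Remark \ref{sqGS}, and then translate each of the five conditions into a property of the row partial isometry $V^{[b]}$ on the Herglotz space $\scr{H}^+(H_{[b]})$. Remark \ref{sqGS} provides a bijection between contractive (respectively extremal) Gleason solutions for $b$ and those for $[b]$, so (i) transfers directly between $b$ and $[b]$. The identifications $\scr{H}(b) = \scr{H}([b])$ (when $\J \subseteq \K$) or $\scr{H}([b]) = \scr{H}(b) \oplus H^2 _d \otimes (\J \ominus \K)$ (when $\K \subseteq \J$) similarly transfer (ii), and an analogous bookkeeping for $bg$ versus $[b]g$ handles (iii). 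Conditions (iv) and (v) are already stated in terms of $[b]$.

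For (i) $\Leftrightarrow$ (ii), I will use the surjection $\mbf{b} \mapsto X(\mbf{b})$ from equation (\ref{GSHbb}), which preserves extremality and is injective if and only if $\mr{supp}(b) = \J$. Uniqueness of $\mbf{b}$ in (i) forces $\mr{supp}(b) = \J$, since otherwise one could modify any contractive Gleason solution on the non-trivial subspace $\mr{supp}(b)^\perp$ without changing $X(\mbf{b})$, contradicting the asserted uniqueness. Once $\mr{supp}(b) = \J$ is in hand, the bijection transfers uniqueness and extremality between $b$ and $\scr{H}(b)$.

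For (i) $\Leftrightarrow$ (iv) $\Leftrightarrow$ (v) in the square non-unital case, I will invoke the bijection $D \leftrightarrow \mbf{b}[D] = U_b^* D^* K_0^b (I - b(0))$ from Subsection \ref{Gleasonsub} between contractive extensions $D$ of $V^b$ on $\scr{H}^+(H_b)$ and contractive Gleason solutions for $b$, with extremal solutions corresponding to co-isometric extensions. Lemma \ref{contractext} parametrizes contractive extensions as $D = V^b - C$ with $\ker{C} ^\perp \subseteq \ker{V^b}$ and $\ran{C} \subseteq \ran{V^b} ^\perp$; combining this with the co-isometry requirement, a short case analysis shows that (i) is equivalent to $V^b$ itself being a co-isometry. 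From the defining identity $V^b (z^* K_z^b h) = (K_z^b - K_0^b)h$ of equation (\ref{Hergpi}), a function $f \in \scr{H}^+(H_b)$ lies in $\ker{(V^b)^*}$ iff $\langle f(z) - f(0), h\rangle = 0$ for every $z \in \B^d$ and $h \in \J$, equivalently iff $f$ is constant. Hence (i) is equivalent to the absence of non-zero constants in $\scr{H}^+(H_b)$, which is (iv). For (iv) $\Leftrightarrow$ (v), condition (v) asserts that each $K_0^{[b]}(I - b(0))h$ lies in $\ran{V^{[b]}} = \ker{(V^{[b]})^*}^\perp$; by the reproducing property this collapses to $\langle g, (I - b(0))h\rangle = 0$ for every constant $F \equiv g \in \scr{H}^+(H_{[b]})$ and every $h \in \J$, and the non-unital hypothesis (via an easy case split over $\J \subseteq \K$ and $\K \subseteq \J$, the latter giving invertibility of $I_\J - b(0)^*|_\J$) forces $g = 0$.

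Finally, (iii) $\Leftrightarrow$ (iv) will follow from Lemma \ref{ontoisomult}: the unitary multiplier $U_{[b]} : \scr{H}([b]) \to \scr{H}^+(H_{[b]})$ is multiplication by $(I - [b])^{-1}$, so a $\J$-valued constant $g$ lies in $\scr{H}^+(H_{[b]})$ iff $(I - [b])g = g - [b]g \in \scr{H}([b])$. The main obstacle will be justifying that this condition is equivalent to $bg \in \scr{H}(b)$, i.e., showing that in the square non-unital setting $g - bg \in \scr{H}(b)$ iff $bg \in \scr{H}(b)$ even though $\scr{H}(b)$ need not contain the constant $g$ itself. I would address this by exploiting Sarason's description $\scr{H}(b) = \ran{\sqrt{I - M_b M_b^*}}$ with its intrinsic norm to control the $\scr{H}(b)$-membership of $g$, $bg$, and their difference simultaneously, or by a direct reproducing kernel calculation that tracks how the two conditions couple through the Szeg\"o-like approximation property underlying quasi-extremeness.
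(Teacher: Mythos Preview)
The paper does not actually prove Theorem \ref{equivqe}; it explicitly defers the argument to \cite[Section 4]{JM} and \cite[Theorem 2.25]{Manana}, singling out (iii)$\Leftrightarrow$(iv) as following \cite[Theorem 3.22]{JM}. Your overall strategy --- reduce to the square non-unital case, translate everything into properties of the Herglotz partial isometry $V^{[b]}$ via the bijection $D \leftrightarrow \mbf{b}[D]$ of Subsection \ref{Gleasonsub}, and identify $\ran{V^{[b]}}^\perp$ with the constants in $\scr{H}^+(H_{[b]})$ --- is exactly the machinery of \cite{JM} that the paper invokes, so the approach is the intended one.

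There are two places where your writeup is not yet a proof. First, the reduction to $[b]$ is looser than you state: Remark \ref{sqGS} does \emph{not} give a bijection between all contractive Gleason solutions for $b$ and all those for $[b]$. In the case $\J \subseteq \K$ it only says each $\mbf{b}$ extends to some $\mbf{[b]}$, and in the case $\K \subseteq \J$ the map $\mbf{b} \mapsto \mbf{b} \oplus \mbf{0}$ is not surjective. This is why conditions (iv) and (v) are phrased for $\J$-valued constants and for $K_0^{[b]}(I-b(0))\J$ rather than for all constants and all of $\ran{V^{[b]}}^\perp$: the correct transfer is not ``(i) for $b$ $\Leftrightarrow$ (i) for $[b]$'' but rather ``(i) for $b$ $\Leftrightarrow$ the $\J$-restricted versions of (iv), (v) for $[b]$''. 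Your argument for (iv)$\Leftrightarrow$(v) also needs this refinement, since $\ran{V^{[b]}}^\perp$ consists of \emph{all} constants in $\scr{H}^+(H_{[b]})$, not only the $\J$-valued ones.

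Second, and as you yourself flag, the implication (iii)$\Leftrightarrow$(iv) is the crux, and you have not closed it. The issue is precisely that the unitary $U_{[b]}^{-1}$ sends the constant $g$ to $(I-[b])g$, and one must show that $(I-b)g \in \scr{H}(b)$ is equivalent to $bg \in \scr{H}(b)$ without knowing a priori that the constant $g$ itself lies in $\scr{H}(b)$. Your proposed fixes (Sarason's range description, a direct kernel computation) are the right tools, but the actual argument --- carried out in \cite[Theorem 3.22]{JM} --- uses a specific identity relating $\|bg\|_{\scr{H}(b)}$-type quantities to the Herglotz norm of the constant, and is not a one-line consequence of either description. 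Until that computation is supplied, the chain of equivalences is incomplete.
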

Any Schur multiplier is said to be \emph{quasi-extreme} if it obeys the assumptions and equivalent conditions of this theorem. If, for example, $b$ is strictly contractive, then $[b]$ will be strictly contractive (and hence non-unital).
For conditions (iv) and (v) of the above theorem we are assuming that either $\J \subseteq \K$ or $\K \subseteq \J$. There is no loss of generality with this assumption, since it is easy to see that $b \in \scr{S} _d (\J , \K )$ is quasi-extreme if and only if every member of its coincidence class is quasi-extreme. In the particular case where $\J = \K$ so that $b = [b]$, items (iv) and (v) reduce to:
\bi
\item[$\mr{(iv)} ^\prime$] $\scr{H} ^+ (H_b)$ contains no constant functions.
\item[$\mr{(v)}^\prime$] $V^b$ is a co-isometry,
\ei
see \cite[Theorem 4.17]{JM}. Since the proof and proof techniques of Theorem \ref{equivqe} are very similar to those of \cite[Section 4]{JM}, we will not include it here. The equivalence of (iii) and (iv), for example, follows as in the proof of \cite[Theorem 3.22]{JM}. An arbitrary purely contractive $b \in \scr{S} _d (\J , \K)$ may still not satisfy the assumptions of Theorem \ref{equivqe}, \emph{i.e.} $[b]$ may not be non-unital, and so we define:
\begin{defn} \label{QEdef}
    A purely contractive $b \in \scr{S} _d (\J , \K )$ is \emph{quasi-extreme} if $b$ has a unique contractive Gleason solution, and this solution is extremal.
\end{defn}
In particular, the bijection between contractive Gleason solutions for $b$ and (the strictly contractive) $b ^{\ang{0}}$ of Lemma \ref{FSGSbij} implies:
\begin{cor} \label{PIQE}
    A purely contractive $b \in \scr{S} _d (\J , \K )$ is quasi-extreme if and only if the $\alpha$-Frostman shift $b^{\ang{\alpha}}$ is quasi-extreme for any
pure contraction $\alpha \in [\scr{L} (\J ,\K) ] _1$.
\end{cor}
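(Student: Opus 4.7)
The plan is to apply Theorem \ref{FSGSbij} twice and compose the resulting bijections. Recall from the discussion immediately preceding Theorem \ref{FSGSbij} that $b = b^{\ang{b(0)}}$ and $(b^{\ang{\alpha}})^{\ang{0}} = b^{\ang{0}}$, so up to the passage through the normalized shift $b^{\ang{0}}$, both $b$ and $b^{\ang{\alpha}}$ are Frostman shifts of a common strictly contractive Schur class function vanishing at the origin.

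First, since $b(0)$ is a pure contraction by hypothesis, Theorem \ref{FSGSbij} (applied with $\alpha$ there replaced by $b(0)$) supplies a bijection between contractive Gleason solutions for $b = b^{\ang{b(0)}}$ and contractive Gleason solutions for $b^{\ang{0}}$, and this bijection sends extremal solutions to extremal solutions. Second, for the given pure contraction $\alpha$, the preceding theorem guarantees that $b^{\ang{\alpha}}$ is again a purely contractive Schur class function with $b^{\ang{\alpha}}(0) = \alpha$, so Theorem \ref{FSGSbij} applies once more (this time to $b^{\ang{\alpha}}$ in place of $b$) to give an extremality-preserving bijection between contractive Gleason solutions for $b^{\ang{\alpha}}$ and those for $(b^{\ang{\alpha}})^{\ang{0}} = b^{\ang{0}}$.

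Composing these two bijections yields an extremality-preserving bijection between the sets of contractive Gleason solutions for $b$ and for $b^{\ang{\alpha}}$. In particular, $b$ has a unique contractive Gleason solution which is extremal if and only if $b^{\ang{\alpha}}$ does. By the definition of quasi-extreme purely contractive Schur multiplier given immediately before the corollary, this is exactly the desired equivalence.

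There is essentially no obstacle here beyond bookkeeping: the only point requiring care is that Theorem \ref{FSGSbij} can legitimately be invoked at $b^{\ang{\alpha}}$, which requires $b^{\ang{\alpha}}$ to be purely contractive with $b^{\ang{\alpha}}(0)$ a pure contraction. Both facts are supplied by the theorem on Frostman shifts preceding Theorem \ref{FSGSbij}, together with Lemma \ref{ballauto}, so no new estimates are needed.
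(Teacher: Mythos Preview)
Your proof is correct and follows essentially the same approach as the paper: both rely on the extremality-preserving bijection of Theorem \ref{FSGSbij} between contractive Gleason solutions for a purely contractive Schur function and those for its $0$-Frostman shift, applied once at $b$ and once at $b^{\ang{\alpha}}$ (with the composition passing through the common $b^{\ang{0}}$). The paper simply cites this bijection in a single clause, whereas you spell out the two applications and the verification that Theorem \ref{FSGSbij} is applicable to $b^{\ang{\alpha}}$; this extra care is appropriate but does not constitute a different argument.
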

In particular, $b$ is quasi-extreme if and only if the strictly contractive $b ^{\ang{0}}$ is quasi-extreme (so that $b^{\ang{0}}$ obeys the equivalent properties of Theorem \ref{equivqe}.)
\begin{lemma}
    If $b \in \scr{S} _d (\J , \K )$ is purely contractive and $\mr{supp} (b) = \J$, then $b$ is quasi-extreme if and only if $\scr{H} (b)$ has a unique contractive Gleason solution, and this solution is extremal.
\end{lemma}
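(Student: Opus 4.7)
The plan is to leverage the surjection $\mbf{b} \mapsto X(\mbf{b})$ introduced in Subsection \ref{Gleasonsub}, defined by $X(\mbf{b})^* k_w^b := w^* k_w^b - \mbf{b}^* b(w)^*$, from contractive Gleason solutions for $b$ onto contractive Gleason solutions for $\scr{H}(b)$. The text there records the two crucial properties needed: this map is injective if and only if $\bigcap_{z \in \B^d} \ker{b(z)} = \{0\}$, and it preserves extremal Gleason solutions.

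First I would translate the hypothesis via the definition of support given in Remark \ref{qmodelmaps}, namely $\mr{supp}(b) = \J \ominus \bigcap_{z \in \B^d} \ker{b(z)}$. Thus $\mr{supp}(b) = \J$ is exactly the condition $\bigcap_{z \in \B^d} \ker{b(z)} = \{0\}$. Under our hypothesis, therefore, the surjection $\mbf{b} \mapsto X(\mbf{b})$ upgrades to a bijection between the contractive Gleason solutions for $b$ and those for $\scr{H}(b)$, with extremal solutions corresponding to extremal solutions.

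The equivalence is then a direct transport along this bijection. If $b$ is quasi-extreme, it has a unique contractive Gleason solution $\mbf{b}$ and $\mbf{b}$ is extremal; the bijection then produces a unique contractive Gleason solution $X(\mbf{b})$ for $\scr{H}(b)$, and this solution is extremal by the forward direction of extremality preservation. Conversely, if $\scr{H}(b)$ has a unique contractive Gleason solution $X$ which is extremal, then since the map is surjective there exists $\mbf{b}$ with $X(\mbf{b}) = X$, and since the map is injective this $\mbf{b}$ is the only contractive Gleason solution for $b$. That this unique $\mbf{b}$ must itself be extremal follows because an extremal $\mbf{b}'$ exists (e.g.\ by the general existence of extremal contractive Gleason solutions established earlier in the paper), and by uniqueness $\mbf{b} = \mbf{b}'$.

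The only potential obstacle is making sure extremality transfers in both directions rather than just forward; this is why uniqueness of the contractive Gleason solution (not merely existence of an extremal one) is essential in the hypotheses on both sides. Apart from unwinding the definition of support and citing the bijection properties verbatim, no new computation is required.
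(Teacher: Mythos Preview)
Your proposal is correct and follows essentially the same approach as the paper's proof, which consists of the single line ``This follows from the Formula (\ref{GSHbb}), as in \cite[Theorem 4.9, Theorem 4.4]{JM}.'' You have simply unpacked what that citation means: the surjection $\mbf{b}\mapsto X(\mbf{b})$ becomes a bijection precisely when $\mr{supp}(b)=\J$, and one transports uniqueness and extremality across it. One small caution: the existence of an extremal contractive Gleason solution for $b$ (which you invoke to close the converse direction) is not stated in so many words in the present paper for arbitrary purely contractive $b$; it is implicit in the cited reference \cite{JM}, so you should point to that rather than to ``earlier in the paper.''
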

\begin{proof}
This follows from the Formula (\ref{GSHbb}), as in \cite[Theorem 4.9, Theorem 4.4]{JM}.
\end{proof}

The next result will yield an abstract characterization of CCNC row contractions with quasi-extreme characteristic functions.
\begin{thm} \label{newQE}
Let $b \in \scr{S} _d (\J, \K)$ be a purely contractive Schur-class multiplier such that $\mr{supp} (b) = \J$. Then $b$ is quasi-extreme if and only if there
is an extremal Gleason solution, $X$, for $\scr{H} (b)$ so that
$$ \ker{X} ^\perp \subseteq \bigvee _{z \in \B ^d} z^* k_z ^b \K = \bigvee _{z\in B^d} z^* (I-Xz^* ) ^{-1} \ran{D_{X^*}}.  $$
\end{thm}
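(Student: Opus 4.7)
The plan is to translate the stated containment on $X$ into a condition on the corresponding Gleason solution $\mbf{b}$ for $b$ via (\ref{GSHbb}), then (after a standard reduction to the square non-unital case) invoke criterion (v) of Theorem \ref{equivqe} via the Herglotz-space picture.

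First I would use the bijection (\ref{GSHbb}) to pass from $X$ to $\mbf{b}$. Since $\mr{supp}(b) = \J$, the correspondence $\mbf{b} \mapsto X(\mbf{b})$ is injective and preserves extremality, so the given extremal $X$ arises from a unique extremal contractive Gleason solution $\mbf{b}$ for $b$ via $X^* k_w ^b h = w^* k_w ^b h - \mbf{b} b(w) ^* h$. The vectors $w^* k_w ^b h$ already lie in $\bigvee _{z \in \B ^d} z^* k_z ^b \K$, and the closed linear span of $\ran{\mbf{b} b(w) ^*}$ over $w \in \B ^d$ equals $\mbf{b}(\mr{supp}(b)) = \ran{\mbf{b}}$. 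Hence the stated containment becomes $\ran{\mbf{b}} \subseteq \bigvee _{z \in \B ^d} z^* k_z ^b \K$. Next, passing to the Frostman shift $b^{\ang{0}}$ (which preserves QE by Corollary \ref{PIQE}, and whose extremal Gleason solutions correspond bijectively to those for $b$ via Theorem \ref{FSGSbij} and the Crofoot-type unitary multiplier $M^{\ang{b(0)}}$), I would reduce to the case $b(0) = 0$ so that $[b]$ is strictly contractive and hence non-unital. In each of the cases $\J \subseteq \K$ and $\K \subseteq \J$ of Remark \ref{sqGS}, both the containment condition on $\ran{\mbf{b}}$ and the QE property transfer faithfully to the square extension $[b]$.

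In the square non-unital setting for $[b]$, I would invoke the unitary multiplier $U_{[b]} : \scr{H}([b]) \to \scr{H} ^+ (H_{[b]})$ of Lemma \ref{ontoisomult}. The identity $U_{[b]} k_z ^{[b]} = K_z ^{[b]} (I - [b](z) ^*)$ together with invertibility of $(I - [b](z) ^*)$ yields $(U_{[b]} \otimes I_d) \bigvee _{z} z^* k_z ^{[b]} \J = \bigvee _{z} z^* K_z ^{[b]} \J = \ker{V^{[b]}} ^\perp$. By formula (\ref{Gleformb}), every extremal $\mbf{[b]}$ is represented through a coisometric extension $D$ of $V^{[b]}$ as $(U_{[b]} \otimes I_d) \mbf{[b]} = D^* K_0 ^{[b]} (I - [b](0))$, so the transported condition becomes $\ran{D^* K_0 ^{[b]} (I - [b](0))} \subseteq \ran{V^{[b] *}}$. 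Because $D$ is coisometric one has $D^* | _{\ran{V^{[b]}}} = V^{[b] *}$ and $DD^* = I$; this immediately gives one direction, and applying $D$ to both sides while using $D \ran{V^{[b] *}} = \ran{V^{[b]}}$ gives the other, so the condition is equivalent to $K_0 ^{[b]} (I - [b](0)) \J \subseteq \ran{V^{[b]}}$. By criterion (v) of Theorem \ref{equivqe}, this last containment is exactly QE for $[b]$, and hence for $b$.

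The main obstacle is the bookkeeping in the reduction to the square non-unital case and the careful transport of the containment condition under both the Frostman shift and the square extension; once in that setting, the Herglotz argument mirrors the corresponding equivalence in \cite[Theorem 4.17]{JM}.
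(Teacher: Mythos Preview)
Your overall route is exactly the paper's: translate the containment on $X$ to $\ran{\mbf{b}}\subseteq\bigvee_z z^*k_z^b\K$ via (\ref{GSHbb}), reduce to $b(0)=0$ by the Frostman shift and Corollary~\ref{PIQE}/Theorem~\ref{FSGSbij}, pass to the square extension $[b]$, transport to the Herglotz space with $U_{[b]}$, and land on criterion~(v) of Theorem~\ref{equivqe}. The forward direction is fine.

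There is a gap in your converse. You invoke $DD^*=I$ to pass from $\ran{D^*K_0^{[b]}(I-[b](0))|_\J}\subseteq\ker{V^{[b]}}^\perp$ to $K_0^{[b]}(I-[b](0))\J\subseteq\ran{V^{[b]}}$, but $DD^*=I$ requires $D$ coisometric, i.e.\ $\mbf{[b]}$ extremal. Remark~\ref{sqGS} only guarantees a \emph{contractive} $\mbf{[b]}$ compatible with the given extremal $\mbf{b}$; in the case $\J\subseteq\K$ it says extremality of $\mbf{[b]}$ implies extremality of $\mbf{b}$, not the converse, and in the case $\K\subseteq\J$ the explicit choice $\mbf{b}\oplus\mbf{0}$ is never extremal when $\K\subsetneq\J$. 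So the lift to a coisometric $D$ is unjustified. The paper closes this step differently: using only that $D$ is a contractive extension, one writes $D^*=(V^{[b]})^*+C^*$ with $\ran{C^*}\subseteq\ker{V^{[b]}}$ (Lemma~\ref{contractext}), so the containment in $\ker{V^{[b]}}^\perp$ forces $D^*K_0^{[b]}|_\J=(V^{[b]})^*K_0^{[b]}|_\J$; then extremality of $\mbf{b}$ (not of $\mbf{[b]}$) together with $K^{[b]}(0,0)=I$ gives $P_\J K_0^{[b]*}\bigl(I-V^{[b]}(V^{[b]})^*\bigr)K_0^{[b]}P_\J=0$, hence $K_0^{[b]}\J\subseteq\ran{V^{[b]}}$. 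Replacing your $DD^*=I$ shortcut with this argument completes the proof.
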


\begin{proof}
We will first prove that any purely contractive $b$ has this property if and only if $b^{\ang{0}}$ has this property. This will show that we can assume, without loss of generality, that $b$ is strictly contractive so that the equivalent conditions of Theorem \ref{equivqe} apply.

Given a purely contractive $b \in \scr{S} _d (\J , \K )$, with $\mr{supp} (b) = \J$, let $X = X (\mbf{b} )$ be a contractive and extremal Gleason solution for $\scr{H} (b)$. Recall that $X (\mbf{b} )$ is defined as in Formula (\ref{GSHbb}), and that since $\mr{supp} (b) = \J$, $X (\mbf{b} )$ is extremal if and only if $\mbf{b}$ is. Then
$$ \ker{X} ^\perp = \ov{\ran{X^* } } = \bigvee _{z \in \B ^d} \left( z^* k_z ^b - \mbf{b} b(z) ^* \right) \K, $$ and it follows that
$b$ will have the desired property if and only if
$$ \bigvee _{z \in \B ^d} \mbf{b} b(z)^* \K = \bigvee \mbf{b} \J \subseteq \bigvee _{w \in \B ^d} w^* k_w ^b \K. \quad \quad (\mr{supp} (b) = \J)$$
By the bijection between Gleason solutions for $b$ and $b^{\ang{0}}$, Lemma \ref{FSGSbij}, $b$ will have this property if and only if
\ba \bigvee _{z \in \B ^d} z^* k_z ^{\ang{0}} \K & \supseteq &(M ^{\ang{b(0)} } \otimes I_d ) ^* \mbf{b} \J; \quad \quad k^{\ang{0}} := k^{b^{\ang{0}}} \nn \\
& = & \mbf{b} ^{\ang{0}} \J, \nn \ea where $\mbf{b} ^{\ang{0}} := (M ^{\ang{b(0)} } \otimes I_d ) ^* \mbf{b} D_{b(0)} ^{-1}$ is a contractive and extremal Gleason solution for $b ^{\ang{0}}$.  As above it follows
that this happens if and only if $\ker{ X ^{\ang{0}} } ^\perp \subseteq \bigvee z^* k_z ^{\ang{0}} \K$, where $X^{\ang{0}} := X (\mbf{b} ^{\ang{0}})$, and $b$ has the desired property if and only if its Frostman shift $b^{\ang{0}}$ does. We can now assume without loss of generality that $b \in \scr{S} _d (\J , \K)$ is strictly contractive and that $b(0) =0$.

First suppose that $b$ is quasi-extreme (QE) so that $b$ has a unique contractive Gleason solution $\mbf{b}$ which is extremal, by Theorem \ref{equivqe}, and $X := X(\mbf{b} )$ is the unique contractive and extremal Gleason solution for $\scr{H} (b)$.
As in the first part of the proof, it follows that $b$ will have the desired property provided that
$ \mbf{b} \J \subseteq \bigvee _{z\in \B ^d} z^* k_z ^b \K.$
Assume without loss of generality that $\J \subseteq \K$ or $\K \subseteq \J$ and consider $a := [b]$, the (strictly contractive) square extension of $b$. As described in Subsection \ref{Gleasonsub}, any contractive Gleason solution for $a$ is given by
$$ \mbf{a} = (U_a ^* \otimes I_d) D ^* K_0 ^a , \quad \quad (a(0) =0)$$ where $D\supseteq V^a$ is a contractive extension of $V^a$ on $\scr{H} ^+ (H_a)$, the Herglotz space of $a$, and $U_a : \scr{H} (a) \rightarrow \scr{H} ^+ (H_a)$ is the canonical unitary multiplier of Lemma \ref{ontoisomult}. Choose $D = V^a$. In the first case where $\J \subset \K$, uniqueness of $\mbf{b}$ implies that
$ \mbf{b} = \mbf{a} | _\J $ so that
\ba \mbf{b} \J & \subseteq & (U_a ^* \otimes I_d) V_a ^* K_0 ^a \K \nn \\
& \subseteq & \bigvee _{z \in \B ^d } z^* U_a ^* K_z ^a \K \nn \\
& = & \bigvee _{z\in \B ^d} z^* k_z ^b (I-a(z) ^* ) ^{-1} \K = \bigvee z^* k_z ^b \K. \nn \ea
Similarly, in the second case where $\K \subseteq \J$,
\ba \mbf{b} \J & = & \bbm I _{\scr{H} (b)} \otimes  I_d , & 0 \ebm  \mbf{a} \J \nn \\
& \subseteq &  \bbm I, & 0 \ebm \bigvee _{z \in \B ^d} z^* k_z ^a \J \nn \\
& = &   \bigvee _{z \in \B ^d} z^* \bbm I , & 0 \ebm \bbm k_z ^b & 0 \\ 0 & k_z \otimes I _{\J \ominus \K}  \ebm \bbm \K \\ \J \ominus \K \ebm \nn \\
& = & \bigvee _{z \in \B ^d} z^* k_z ^b \K. \nn \ea

Conversely, suppose that $\scr{H} (b)$ has an extremal Gleason solution, $X$, with the desired property. Then it follows, as above, that
$X = X (\mbf{b} )$, where $\mbf{b}$ is an extremal Gleason solution for $b$ obeying
$\mbf{b} \J \subseteq \bigvee _{z \in \B ^d} z^* k_z ^b \K.$ By Remark \ref{sqGS}, setting $a = [b]$, we have that there is a contractive Gleason solution $\mbf{a}$ for $a$
such that either $\mbf{b} = \mbf{a} | _\J$ or $\mbf{b} = \bbm I_{\scr{H} (b)} \otimes I_d, & 0 \ebm \mbf{a}$.
Also, again by Subsection \ref{Gleasonsub}, there is a contractive extension $D \supseteq V^a$ so that $\mbf{a} = \mbf{a} [D]$.

Consider the first case where $\J \subseteq \K$.
It follows that $\mbf{b}$ has the form $\mbf{b} = \mbf{a}[D] | _\J$ so that
$$ D^* K_0 ^a  \J \subseteq \bigvee _{z \in \B ^d} z^* K_z ^a \K = \ker{V^a} ^\perp. \quad \quad \mbox{(Recall $b(0) =0 =a(0)$.)}$$ Since $D ^* = (V^a) ^* + C^* $ where $C ^* : \ran{V^a} ^\perp \rightarrow \ker{V^a}$ (by Lemma \ref{contractext}), it follows that
\ba  D^* K_0 ^a  \J & = &  P_{\ker{V^a} } ^\perp D ^* K_0 ^a  \J  \nn \\
& = & (V^a) ^* K_0 ^a \J, \nn \ea is contained in $\ker{V^a} ^\perp$. Since we assume $X$ and hence $\mbf{b}$ are extremal,
$$ 0 = P _\J  (K_0 ^a) ^* (I -V^a (V^a) ^* ) K_0 ^a  P_\J, $$ and it follows that
$$ K_0 ^a \J \subseteq \ran{V^a},$$ so that $b$ is quasi-extreme by Theorem \ref{equivqe}.

In the second case where $\K \subseteq \J$, we have that $\mbf{a} := \mbf{b} \oplus \mbf{0}$ is a contractive (and extremal) Gleason solution for $\scr{H} (a)$
so that there is a $D \supseteq V^a$ such that $\mbf{a} = \mbf{a} [D]$. As before
$$ \mbf{a} [D] \J \subseteq \bigvee _{z\in \B ^d} z^* k_z ^a \K, \quad \mbox{and} \quad D^* K_0 ^a (I -b(0) ) \J \subseteq \ker{V^a} ^\perp. $$ Again, the same argument as above implies that $b$ is QE.
\end{proof}

\subsection{de Branges-Rovnyak model for quasi-extreme row contractions}

\begin{defn}
    A CCNC row contraction $T : \mc{H} \otimes \C ^d \rightarrow \mc{H}$ with isometric-pure decomposition $T=V-C$ is said to be \emph{quasi-extreme} (QE) if its characteristic function coincides weakly with a QE Schur multiplier.
\end{defn}

We obtain a refined model for QE row contractions:

\begin{thm} \label{main2}
A row contraction $T : \mc{H} \otimes \C ^d \rightarrow \mc{H}$ is QE if and only if $T$ is unitarily equivalent to the (unique) contractive and extremal Gleason solution $X$ in a multi-variable de Branges-Rovnyak space $\scr{H} (b)$ for a quasi-extreme and purely contractive Schur multiplier $b$.

In particular, any QE row contraction, $T$, is unitarily equivalent to $X^{b_T}$ where $b_T$ is any characteristic function for $T$.
The characteristic function, $b_T$, of $T$, is a complete unitary invariant: Any two QE row contractions $T_1, T_2$ are unitarily equivalent if and only if their characteristic functions coincide weakly.
\end{thm}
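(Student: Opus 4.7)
The plan is to leverage Theorem \ref{main1} to reduce the statement to a question about extremal contractive Gleason solutions in deBranges-Rovnyak spaces, and then to use Theorem \ref{equivqe} (which says the QE property of a purely contractive $b$ is equivalent to $\scr{H}(b)$ having a \emph{unique} contractive Gleason solution, necessarily extremal) to promote ``an'' extremal Gleason solution to ``the unique'' one.

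For the forward direction, assume $T$ is QE, so by definition $b_T$ coincides weakly with some QE Schur multiplier. Since Theorem \ref{equivqe} forces $\mr{supp}(\tilde{b})$ to be all of the domain of any QE $\tilde{b}$, one may choose a representative of the weak-coincidence class of $b_T$ that is itself a purely contractive QE multiplier. Theorem \ref{main1} already gives $T \simeq X$ for some extremal contractive Gleason solution $X$ in $\scr{H}(b_T)$; the QE hypothesis, invoked through Theorem \ref{equivqe}, upgrades $X$ to \emph{the} unique contractive (hence unique extremal) Gleason solution in $\scr{H}(b_T)$.

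For the converse, suppose $T \simeq X$ where $X$ is the unique extremal contractive Gleason solution in $\scr{H}(b)$ for a QE and purely contractive $b$. By Example \ref{GSCCNC}, $X$ (and hence $T$) is CCNC, and by Example \ref{extGScharfun}, the characteristic function $b_X$ coincides weakly with $b$. Theorem \ref{main1} then gives that $b_T$ coincides weakly with $b_X$, and therefore with the QE multiplier $b$, so $T$ is QE by definition.

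For the complete-unitary-invariant claim, one direction is immediate from Theorem \ref{main1}. The substantive direction is: given that $b_{T_1}$ and $b_{T_2}$ coincide weakly, produce a unitary intertwining $T_1$ and $T_2$. The route is to use weak coincidence (Remark \ref{qmodelmaps}) to obtain an onto isometry $V$ with $\scr{H}(b_{T_2}) = \scr{H}(V b_{T_1})$, giving a constant unitary multiplier $M_V : \scr{H}(b_{T_1}) \to \scr{H}(b_{T_2})$ which, as noted in Remark \ref{Glemark}, sends extremal contractive Gleason solutions to extremal contractive Gleason solutions. Since both $b_{T_1}$ and $b_{T_2}$ are QE, each space has only one such solution, so $M_V$ must intertwine the unique extremal Gleason solution for $\scr{H}(b_{T_1})$ with that for $\scr{H}(b_{T_2})$; combining with the forward direction already proved yields $T_1 \simeq T_2$. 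The main delicate point throughout is the careful translation between the ``weak coincidence'' formulation of QE for $T$ and the pointwise representative needed to invoke Theorem \ref{equivqe} and to use the uniqueness of the Gleason solution.
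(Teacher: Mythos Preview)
Your proposal is correct and follows essentially the same approach as the paper: reduce to Theorem \ref{main1}, invoke uniqueness of the contractive Gleason solution in $\scr{H}(b)$ when $b$ is QE, and for the complete-invariant claim use the constant unitary multiplier between $\scr{H}(b_{T_1})$ and $\scr{H}(b_{T_2})$ induced by weak coincidence to intertwine the (unique) extremal Gleason solutions. The paper's own proof is a terse two-sentence version of exactly this argument; your write-up simply spells out the steps more explicitly and correctly flags the one genuine subtlety (passing from a weak-coincidence-class statement about $b_T$ to a concrete QE representative with full support so that the uniqueness lemma after Corollary \ref{PIQE} applies).
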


\begin{proof}
    This follows from Theorem \ref{main1} under the added assumption that the characteristic function of $T$ is quasi-extreme. For the final statement simply note that if $b_1, b_2$ are quasi-extreme Schur functions that coincide weakly so that $\scr{H} (b_1) = \scr{H} (U b_2)$ for some unitary $U$, it is easy to see that $X^{b_1}$ is unitarily equivalent to $X ^{b_2}$ (via a constant unitary multiplier), where $X ^{b_1}, X^{b_2}$ are the unique, contractive, and extremal Gleason solutions for $\scr{H} (b_1 )$ and $\scr{H} (b_2)$, respectively.
\end{proof}

We will conclude with an abstract characterization of the class of QE row contractions:

\begin{thm} \label{QEconditionthm}
    A row contraction $T : \mc{H} \otimes \C ^d \rightarrow \mc{H}$ is QE if and only if
$$ \bigvee _{z \in \B ^d} (I -Tz^* ) ^{-1} \ran{D_{T^*} } = \mc{H}; \quad \quad T \ \mbox{is CCNC}, $$ and
$$ \ker{T} ^\perp  \subseteq \bigvee _{z \in \B ^d } z^* (I - Tz^* ) ^{-1} \ran{D_{T^*} }. \quad \quad \mbox{($T$ obeys the QE condition.)} $$
\end{thm}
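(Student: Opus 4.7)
The plan is to deduce Theorem~\ref{QEconditionthm} from Theorem~\ref{newQE} by using Theorems~\ref{main1} and~\ref{main2} to transport between an abstract row contraction $T$ and its concrete model as an extremal Gleason solution on a deBranges--Rovnyak space. Condition (i) in the statement is exactly the CCNC condition, while (ii) will be recognized as the preimage, under the model unitary, of the condition already characterized in Theorem~\ref{newQE}, so most of the work consists in chasing these identifications.

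In the forward direction, suppose $T$ is QE. Since QE is by definition a sub-class of CCNC, (i) holds. Theorem~\ref{main2} produces a unitary $U : \H \to \scr{H}(b_T)$ with $U T (U^* \otimes I_d) = X^{b_T}$, where $b_T \in \scr{S} _d (\J, \K)$ is a purely contractive quasi-extreme Schur multiplier. Quasi-extremality and Theorem~\ref{equivqe}(ii) give $\mr{supp}(b_T) = \J$, so Theorem~\ref{newQE} applies and yields
\[ \ker{X^{b_T}}^\perp \subseteq \bigvee_{z \in \B^d} z^* k_z^{b_T} \K. \]
Because $X^{b_T}$ is extremal, $D_{(X^{b_T})^*}^2 = I - X^{b_T}(X^{b_T})^* = k_0^{b_T}(k_0^{b_T})^*$, so $\bigvee \ran{D_{(X^{b_T})^*}} = \ov{\ran{k_0^{b_T}}}$. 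Combining this with the Gleason kernel identity (\ref{Gleker}) and the boundedness of $(I - X^{b_T}z^*)^{-1}$ shows that the right-hand side above coincides with $\bigvee_z z^* (I - X^{b_T}z^*)^{-1} \ran{D_{(X^{b_T})^*}}$. Pulling this back through $U$, and noting that $U$ intertwines $\ker{\,\cdot\,}^\perp$, $\ran{D_{\,\cdot^*}}$ and the resolvents $(I - \,\cdot\, z^*)^{-1}$, delivers (ii) for $T$.

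In the reverse direction, assume $T$ satisfies (i) and (ii). Condition (i) is CCNC, so Theorem~\ref{main1} provides a unitary equivalence $T \simeq X^{b_T}$ for a purely contractive characteristic function $b_T \in \scr{S}_d(\J , \K)$. Replacing $b_T$ by $b_T | _{\mr{supp} (b_T)}$ (which lies in the same weak coincidence class) we may assume $\mr{supp}(b_T) = \J$. Transporting (ii) through the unitary yields
\[ \ker{X^{b_T}}^\perp \subseteq \bigvee_{z \in \B^d} z^* k_z^{b_T} \K, \]
after the same kernel/defect identification as above, and the reverse implication of Theorem~\ref{newQE} then forces $b_T$ to be quasi-extreme. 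Hence $T$ is QE by definition. The only mildly delicate point I foresee is verifying cleanly that the subspaces appearing in (ii) genuinely transform covariantly under unitary equivalence and that the passage from $\bigvee z^* (I - X^{b_T}z^*)^{-1} \ran{D_{(X^{b_T})^*}}$ to $\bigvee z^* k_z^{b_T} \K$ is valid at the level of closed linear spans; both become routine once one remembers that extremality of $X^{b_T}$ converts $D_{(X^{b_T})^*}^2$ into the rank-$\dim \K$ positive operator $k_0^{b_T}(k_0^{b_T})^*$.
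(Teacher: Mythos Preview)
Your proposal is correct and follows essentially the same route as the paper: transport to the model via Theorems~\ref{main1} and~\ref{main2}, invoke Theorem~\ref{newQE} there, and pull the QE condition back through the unitary equivalence. One minor point: in the forward direction your appeal to Theorem~\ref{equivqe}(ii) to obtain $\mr{supp}(b_T)=\J$ presupposes that $[b_T]$ is non-unital, which is not automatic for a merely purely contractive $b_T$; the paper (and you yourself, in the reverse direction) instead simply replaces $b_T$ by its restriction to $\mr{supp}(b_T)$, which is harmless since the characteristic function is only determined up to weak coincidence.
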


\begin{proof}
Let $T$ be a QE row contraction on $\mc{H}$. By Theorem \ref{main2}, $T$ is unitarily equivalent to the unique contractive and extremal Gleason solution, $X^T$ for $\scr{H} (b_T)$. We can assume that $b_T = b_T |_{\mr{supp} (b_T )}$ so that $b_T$ is QE by Theorem \ref{main2}. By Theorem \ref{newQE},
$$ \ker{X^T} ^\perp \subseteq \bigvee _{z \in \B ^d } z^* (I - X^T z^* ) ^{-1} \ran{D_{(X^T)^*} }, $$ and it follows that $T \simeq X^T$ also obeys the QE condition.

Conversely suppose that $T$ is CCNC and $T$ obeys the QE condition. Then $T \simeq X^T$, an extremal Gleason solution in $\scr{H} (b_T)$. Again we can assume that $b_T = b_T |_{\mr{supp} (b_T)}$, and since $T$ obeys the QE condition, so does $X^T$. Theorem \ref{newQE} implies that $b_T$ is quasi-extreme so that
$T$ is QE.
\end{proof}

\begin{prop}
If $T$ is a QE row contraction on $\mc{H}$ with isometric-pure decomposition $T = V-C$, then its partial isometric part, $V$, is a QE row partial isometry.
\end{prop}
\begin{proof}
Since $\ker{T} \subseteq \ker{V}$ and $T \supseteq V$ is a QE contractive extension,
\ba \ker{V} ^\perp & \subseteq & \ker{T} ^\perp \subseteq \bigvee _{z \in \B ^d} z^* (I - Tz^*) ^{-1} \ran{D_{T^*} } \nn \\
& =  & \bigvee z^* (I - Vz^* ) ^{-1} \ran{D_{V^*}}, \nn \ea and $V$ also obeys the QE condition.
\end{proof}

On the other hand,
\begin{prop} \label{QExtprop}
Let $V$ be a QE row partial isometry with model triple $(\ga , \J , \K)$. If 
$\delta \in \scr{L} (\J , \K )$ is any pure contraction with $\ker{\delta } ^\perp \subseteq \mr{supp} (b_V ^\ga)$, then $T _{\delta} = V - \ga (0)  \delta \ga (\infty) ^*$ is a QE row contractive extension of $V$.
\end{prop}

\begin{lemma}
Let $b \in \scr{S} _d (\J, \K)$, $b(0) = 0$ be a Schur multiplier and $\delta \in [ \scr{L} (\J , \K ) ] _1$ be any pure contraction obeying $\ker{\delta } ^\perp \subseteq \mr{supp} (b)$. If $b' := b| _{\mr{supp} (b) } $ and $\delta ' := \delta | _{\mr{supp} (b)}$, then $\mr{supp} (b) = \mr{supp} (b^{\ang{\delta}} )$ and $b ^{\ang{\delta}} | _{\mr{supp} (b)} = (b ' ) ^{\ang{\delta '}}$.
\end{lemma}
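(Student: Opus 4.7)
The plan is to reduce both parts of the lemma to one simple calculation: since $b(0) = 0$ we have $b^{\ang{0}} = b$, so the Frostman shift collapses to $b^{\ang{\delta}}(z) = \Phi_\delta^{-1}(b(z)) = D_{\delta^*}^{-1}(b(z) + \delta)(I + \delta^* b(z))^{-1} D_\delta$. The central observation is that whenever $\delta h = 0$, we have $\delta^* \delta h = 0$, hence $D_\delta h = h$; this automatically places $h$ in the domain of the (possibly unbounded) $D_\delta^{-1}$ with $D_\delta^{-1} h = h$. The same remark applies to $D_{\delta^*}$ on $\ker(\delta^*)$. This is the technical point that lets us handle the fact that $\delta$ is only pure, not strict.

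For the support equality $\mr{supp}(b) = \mr{supp}(b^{\ang{\delta}}) =: \M$, I would argue both inclusions directly. For ($\supseteq$), fix $h \in \M^\perp = \bigcap_z \ker(b(z))$; the hypothesis $\ker(\delta)^\perp \subseteq \M$ gives $\delta h = 0$, so by the key observation $D_\delta h = h$, while $(I + \delta^* b(z))^{-1} h = h$ (since $\delta^* b(z) h = 0$), and $(b(z) + \delta) h = 0$. Plugging into the formula yields $b^{\ang{\delta}}(z) h = 0$ for every $z$. For ($\subseteq$), use the symmetric identity $(b^{\ang{\delta}})^{\ang{0}} = \Phi_\delta(b^{\ang{\delta}}) = b$, which follows from Lemma \ref{ballauto}(ii) together with $b^{\ang{0}} = b$ and $b^{\ang{\delta}}(0) = \Phi_\delta^{-1}(0) = \delta$ (using the standard intertwining $\delta D_\delta = D_{\delta^*}\delta$). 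For $h \in \bigcap_z \ker(b^{\ang{\delta}}(z))$, evaluation at $z=0$ gives $\delta h = 0$, and the analogous collapse of the $\Phi_\delta$ formula then gives $b(z) h = 0$ for all $z$, so $h \in \M^\perp$.

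For the restriction identity $b^{\ang{\delta}}|_\M = (b')^{\ang{\delta'}}$, I would decompose $\J = \M \oplus \M^\perp$ and verify that every ingredient in the Frostman shift formula is block-diagonal with respect to this splitting. Indeed, $b(z) = b'(z) \oplus 0$ by definition of $\M$; $\delta = \delta' \oplus 0$ by the hypothesis $\ker(\delta)^\perp \subseteq \M$; consequently $\delta^*\delta = (\delta')^*\delta' \oplus 0$ so $D_\delta = D_{\delta'} \oplus I_{\M^\perp}$; and $\delta\delta^* = \delta'(\delta')^*$ so $D_{\delta^*} = D_{(\delta')^*}$. Since $\ran(\delta^*) \subseteq \M$, the operator $I + \delta^* b(z)$ preserves $\M$ and acts as $I_{\M^\perp}$ on $\M^\perp$, and the same holds for its inverse. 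Plugging these block decompositions into the formula for $b^{\ang{\delta}}(z)$ and restricting to $\M$ reproduces, symbol-for-symbol, the formula for $(b')^{\ang{\delta'}}(z)$.

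The main obstacle to keep honest is the unboundedness of $D_\delta^{-1}$ and $D_{\delta^*}^{-1}$ when $\delta$ is merely pure; the proof of the support equality is delicate precisely because one has to know in advance that the vectors being acted on lie in the right domains. This is arranged by the $D_\delta h = h$ observation at the start. Once that technical point is absorbed, everything reduces to bookkeeping of block decompositions, and nothing deeper than Lemma \ref{ballauto} is required.
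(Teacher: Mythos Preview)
Your argument is correct and follows essentially the same route as the paper's: write $b$ and $\delta$ in block form with respect to $\J = \mr{supp}(b) \oplus \mr{supp}(b)^\perp$, observe that every factor in the Frostman-shift formula is block-diagonal, and read off both the support inclusion $\mr{supp}(b^{\ang{\delta}}) \subseteq \mr{supp}(b)$ and the restriction identity. The paper leaves the reverse support inclusion and the restriction identity as ``similarly easy to verify,'' whereas you spell them out; your argument for the reverse inclusion via $(b^{\ang{\delta}})^{\ang{0}} = b$ is a clean way to do this.

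One small imprecision: you invoke Lemma~\ref{ballauto}(ii) for the identity $\Phi_\delta \circ \Phi_\delta^{-1} = \mathrm{id}$, but that part of the lemma is stated only for \emph{strict} $\alpha$, while here $\delta$ is merely pure. The identity $\Phi_\delta(\Phi_\delta^{-1}(\beta)) = \beta$ still holds for pure $\delta$ and strict $\beta$ (as you implicitly use, and as the paper itself does just after defining Frostman shifts), but it requires the same algebraic computation as in the proof of Lemma~\ref{ballauto} rather than a direct citation. Since you already isolate the domain issue (``$D_\delta h = h$ for $h \in \ker\delta$'') and apply the formula only on vectors where it is manifestly well-defined, this does not affect the validity of your argument.
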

\begin{proof}
Let $\mc{H} := \mr{supp} (b)$.  Writing elements of $\J = \mc{H} \oplus (\J \ominus \mc{H} )$ as two-component column vectors, let $\alpha := \delta | _{\mc{H}}$ and $a := b | _{\mc{H}}$. Recall that $b^{\delta} (z) = D_{\delta ^*} ^{-1} (b(z) + \delta ) (I _\J +\delta ^* b(z) ) ^{-1} D _\delta$.
Writing $$ \delta = \bbm \alpha , & 0 _{\J \ominus \mc{H} , \K } \ebm, \quad \mbox{and} \quad b = \bbm a, & 0 _{\J \ominus \mc{H}, \K } \ebm, $$ it is easy to check that $$ b^{\ang{\delta} } (z) P _{\J \ominus \mc{H} } = D_{\delta ^* } ^{-1} \bbm a(z) + \alpha, & 0 _{\J \ominus \mc{H} , \K } \ebm \bbm 0 & 0 \\ 0 & I _{\J \ominus \mc{H} } \ebm =0, $$ proving that $\mr{supp} ( b^{\ang{\delta}} ) \subseteq \mc{H} = \mr{supp} (b)$. The remaining assertions are similarly easy to verify.
\end{proof}
\begin{proof}{ (of Proposition \ref{QExtprop})}
If $V$ is a QE row partial isometry, and $(\ga, \J  , \K )$ is any model triple for $V$, then $b:= (b_V ^\ga ) | _{\mr{supp} (b_V ^\ga ) }$ is quasi-extreme. Given any pure contraction $\delta \in [ \scr{L} (\J , \K ) ]_1$, we can define, as in Lemma \ref{zeropoint}, the CCNC row contraction $T = T _{\delta} := V - \ga (0)  \delta \ga (\infty) ^*$, which, by definition, has the characteristic function
$b_T ^\ga = (b_V ^\ga ) ^{\ang{\delta}}$. Under the assumption that $\ker{\delta } ^\perp \subseteq \mr{supp} (b_V ^\ga)$, the above lemma proves that $b_T ^\ga$ coincides weakly with a Frostman shift of the quasi-extreme Schur-class function $b$, so that $T$ is also QE.
\end{proof}

If, however, $\ker{\delta} ^\perp$ is not contained in $\mr{supp} (b_V ^\ga)$, $T _\delta$ can fail to be QE. That is, as the following simple example shows, there exist CCNC row contractions $T$ with partial isometric part $V$ such that $V$ is QE but $T$ is not.
\begin{eg}
Let $b \in \scr{S} _d (\mc{H})$ be any purely contractive quasi-extreme multiplier and set
$$ B := \bbm b & 0 \\ 0 & 0 \ebm \in \scr{S} _d (\mc{H} \oplus \C ), \quad \quad \delta := \bbm 0 & 0 \\ 0& r \ebm; \quad 0<r<1. $$
Then,
\ba B^{\ang{\delta}} (z) & = & D_{\delta ^* } ^{-1} ( B(z) + \delta ) ( I + \delta ^* B(z) ) ^{-1} D_{\delta} \nn \\
& = & \bbm b(z) & 0 \\ 0 & r \ebm, \nn \ea which cannot be quasi-extreme since $0<r<1$.
\end{eg}

\section{Outlook} \label{Outlook}

Motivated by the characterization of CNC row contractions in Section \ref{CNCsection}, given any CNC row partial isometry, $V$, on $\mc{H}$, it is natural to extend our definition of model triple and model map to the non-commutative setting of non-commutative function theory \cite{KVV,Ball-NC}.

Namely, recall that the non-commutative (NC) open unit ball is the disjoint union $\B ^d _{\N}:= \coprod _{n=1} ^\infty \B ^d _n$, where
$$ \B ^d _n := \left( \C ^{n \times n } \otimes \C _d \right) _1, $$ is viewed as the set of all strict row contractions (with $d$ component operators) on $\C ^n$, and $\B ^d _1 \simeq \B ^d$.
A natural extension of our concept of model triple to the NC unit ball, $\B ^d _{\N}$, would be a triple $(\ga , \J _\infty , \J _0 )$ consisting of two Hilbert spaces $\J _\infty \simeq \ker{V}$, $\J _0 \simeq \ran{V} ^\perp$, and a map $\ga $ on $\B ^d _{\N}\cup \{ \infty \}$,
$$ \ga : \left\{  \arraycolsep=2pt\def\arraystretch{1.2} \begin{array}{ccc} Z \in \B ^d _n & \mapsto & \ga (Z) \in \scr{L} (\J _0 \otimes \C ^n , \ra{V-Z} ^\perp ) \\
\{ \infty \} & \mapsto &\ga (\infty ) \in \scr{L} (\J _\infty , \ker{V} )  \end{array} \right., $$ where $\ga (Z)$ is an isomorphism for each $Z \in \B ^d _{\N}$ and $\ga (0_n) , \ga (\infty )$ are onto isometries. We will call such a model map $\ga$ a non-commutative (NC) model map.
In particular, as in Section \ref{modelmapsect}, if $T \supseteq V$ is any contractive extension of $V$, and $\Ga _T (0) : \J _0 \rightarrow \ran{V} ^\perp$, $\Ga _T (\infty ) : \J _\infty \rightarrow \ker{V}$ are any fixed onto isometries, then
$$ \Ga _T (Z) := (I -TZ^*)^{-1} (\Ga _T (0) \otimes I_n); \quad \quad Z \in \B ^d _n, \quad \quad TZ^* :=T_1 \otimes Z_1 ^* +...+ T_d \otimes Z_d ^*,$$ defines an analytic NC model map for $V$ (we expect $\Ga _T (Z)$ will be anti-analytic in the sense of non-commutative function theory \cite[Chapter 7]{KVV}). Moreover, as in Section \ref{modelmapsect}, for any analytic NC model map $\Ga$, we expect that one can then define an abstract model space, $\hat{\mc{H} } ^\Ga$ with non-commutative reproducing kernel $$ \hat{K} ^\Ga (Z,W) = \Ga (Z) ^* \Ga (W) ; \quad \quad Z, W \in \B ^d _n, $$ and that this will be a non-commutative reproducing kernel Hilbert space (NC-RKHS) in the sense of \cite{Ball-NC,Ball2003rkhs}. If this analogy continues to hold, it would be natural to use $\Ga$ to define a NC \emph{characteristic function}, $B_T (Z)$, on $\B ^d _{\N}$, and one would expect this to be an element of the free (left or right) Schur class of contractive NC multipliers between vector-valued Fock spaces over $\C ^d$ \cite{Ball2006Fock,JMfree}.  Ultimately, it would be interesting to investigate whether such an extended theory will yield an alternate approach to the NC de Branges-Rovnyak model for CNC row contractions as (adjoints of) the restriction of the adjoint of the left or right free shift on (vector-valued) full Fock space over $\C ^d$ to the right or left non-commutative de Branges-Rovnyak spaces, $\scr{H} ^L (B _T)$ or $\scr{H} ^R (B_T)$ \cite{Ball2006Fock,Ball-NC,Ball2003rkhs}.

\small


\end{document}